\def\calF{{\cal F}}
\def\calS{{\cal S}}
\def\calD{{\cal D}}
\def\calB{{\cal B}}
\def\calA{{\cal A}}
\def\calC{{\cal C}}
\def\calL{{\cal L}}
\def\calP{{\cal P}}
\def\calH{{\cal H}}
\def\calG{{\cal G}}
\def\calN{{\cal N}}
\def\n{{\mathbb N}}
\def\r{{\mathbb R}}
\def\c{{\mathbb C}}
\def\e{{\mathbb E}}
\def\scrB{{\mathscr B}}
\def\scrC{{\mathscr C}}
\def\scrF{{\mathscr F}}
\def\scrG{{\mathscr G}}
\def\scrH{{\mathscr H}}
\def\ll{\langle\!\langle}
\def\gg{\rangle\!\rangle}
\def\hat{\widehat}
\def\tilde{\widetilde}
\def\l{\langle}
\def\g{\rangle}
\def\I{{\rm i}}
\begin{document}

\setcounter{page}{61}

\chapter*{NORMAL APPROXIMATION FOR WHITE NOISE FUNCTIONALS BY STEIN'S METHOD AND\\ HIDA CALCULUS}

\markboth{L.~H.~Y.~Chen, Y.-J.~Lee \& H.-H.~Shih}{Stein's Method and Hida Calculus}

\author{Louis H.~Y.~Chen}
\address{Department of Mathematics\\
National University of Singapore\\
10 Lower Kent Ridge Road, Singapore 119076 \\
matchyl@nus.edu.sg}

\author{Yuh-Jia Lee}
\address{Department of Applied Mathematics\\
National University of Kaohsiung\\
Kaohsiung, Taiwan 811\\
yuhjialee@gmail.com}

\author{Hsin-Hung Shih}
\address{Department of Applied Mathematics\\
National University of Kaohsiung\\
Kaohsiung, Taiwan 811\\
hhshih@nuk.edu.tw}

\begin{abstract}
In this chapter, we establish a framework for normal approximation for white noise functionals by Stein's method and Hida calculus.
Our work is inspired by that of Nourdin and Peccati (\textit{Probab. Theory Relat. Fields}
145 (2009), 75-118), who combined Stein's method and Malliavin calculus for normal approximation for functionals of Gaussian processes.
\end{abstract}

\section{Introduction}

Stein's method, introduced by C.~Stein \cite{Steina} in his 1972 paper, is a powerful way of determining the accuracy of normal approximation to the distribution of a sum of dependent random variables.
It has been extended to approximations by a broad class of other probability distributions such as the Poisson, compound Poisson, and Gamma distributions, and to approximations on finite as well as infinite dimensional spaces.
The results of these approximations have been extensively applied in a wide range of other fields such as the theory of random graphs, computational molecular biology, etc.
For further details, see \cite{{BA},{BB},{BC},{Chen},{Sch},{Schb},{Steinb}} and the references cited therein.

Analysis on infinite-dimensional Gaussian spaces has been formulated in terms of Malliavin calculus and Hida calculus.
The former, introduced by Malliavin (\cite{Ma}), studies the calculus of Brownian functionals and their applications on the classical Wiener space.
The connection between Stein's method and Malliavin calculus has been explored by Nourdin and Peccati (see \cite{NP}).
They developed a theory of normal approximation on infinite-dimensional Gaussian spaces.
In their connection, the Malliavin derivative $D$ plays an important role.
See also \cite{{Chena},{CP}}.

Hida calculus, also known as white noise analysis, is the mathematical theory of white noise initiated by T.~Hida in his 1975 Carleton Mathematical Lecture Notes \cite{Hida}.
Let $\{B(t);\,t\in\r\}$ be a standard Brownian motion and let the white noise $\dot{B}(t)\equiv dB(t)/dt, t\in \r$, be represented by generalized functions.
By regarding the collection $\{\dot{B}(t);\,t\in\r\}$ as a coordinate system, Hida defined and studied generalized white noise functionals $\varphi(\dot{B}(t),\,t\in\r)$ through their $U$-functionals.
We refer the interested reader to \cite{{HKPS},{HidaSi},{Kb},{Si}}.

The objective of this chapter is to develop a connection between Stein's method and Hida calculus  for normal approximation for white noise functionals (see Section 5).
Our approach is analogous to that for the connection between Stein's method and Malliavin calculus as established by Nourdin and Peccati \cite{NP}.
The connection between Stein's method and Hida calculus will be built on the expression of the number operator (or the Ornstein-Ulenbeck operator) in terms of the Hida derivatives through integration by parts techniques (see Section 4).
The difficulty that we have encountered so far is that the Hida derivative $\partial_t$, that is the $\dot{B}(t)$-differentiation, cannot be defined on all square-integrable white noise functionals in $(L^2)$.
Extending the domain of $\partial_t$ to a larger subclass of $(L^2)$ and studying the regularity of  $\partial_t$ will be a key contribution in our chapter.

At the time of completing this chapter we came to know about the PhD thesis of Chu \cite{Chu}, in which he developed normal approximation (in Wasserstein distance) for L\'evy functionals by applying Stein's method and Hida calculus.
He achieved this by using the white noise approach of Lee and Shih \cite{LS2004}.

We list some notations which will be often used in this chapter.

\vskip-\lastskip
\pagebreak

\begin{notation}
\begin{itemize}
\item[$(1)$]
For a real locally convex space $V$, $\scrC V$ denotes its complexification.
If $(V,\,|\cdot|_V)$ is a real Hilbert space, then $\scrC V$ is a complex Hilbert space with the $|\cdot|_{\scrC V}$-norm given by $|\phi|_{\scrC V}^2=|\phi_1|_V^2+|\phi_2|_V^2$ for any $\phi=\phi_1+\I\,\phi_2$, $\phi_1,\phi_2\in V$.
Specially, for $V=\calS_p$ with $|\cdot|_p$-norm (see Section 3 for the definition), we will still use $|\cdot|_p$ to denote $|\cdot|_{\scrC\calS_p}$.

\item[$(2)$]
The symbol $(\cdot,\cdot)$ denotes the $\calS'$-$\calS$, $\scrC\calS'$-$\scrC\calS$, $\calS_{-p}$-$\calS_p$, or $\scrC\calS_{-p}$-$\scrC\calS_p$ pairing.

\item[$(3)$]
For a $n$-linear operator $T$ on $X\times\cdots\times X$, $Tx^n$ means $T(x,\ldots,x)$ as well as $Tx^{n-1}y=T(x,\ldots,x,y)$, $x,y\in X$, where $X$ is a real or complex locally convex space.

\item[$(4)$]
The constant $\omega_r$ with $r>0$ is given by the square root of $\sup\{n/2^{2nr};\,n\in\n_0\}$, where $\n_0=\n\cup\{0\}$.
\end{itemize}
\end{notation}

\section{Stein's Method}

In this section we give a brief exposition of the basics of Stein's method for normal approximation.

\subsection{From characterization to approximation}

In his 1986 monograph \cite{Steina}, Stein proved the following characterization of the normal distribution.

\begin{proposition}(Stein's lemma)\label{prop-characterization}
The following are equivalent.\\
(i) $W \sim \mathcal{N}(0,1)$;\\
(ii) $\e[f'(W) - Wf(W)] = 0$  for all $f \in \mathcal{C}_B^1$.
\end{proposition}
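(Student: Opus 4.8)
The plan is to prove the two implications separately. The direction (i)$\Rightarrow$(ii) is a direct integration-by-parts computation against the Gaussian density, while (ii)$\Rightarrow$(i) is the characteristic use of the \emph{Stein equation}. For (i)$\Rightarrow$(ii), let $\phi(x)=(2\pi)^{-1/2}e^{-x^2/2}$ be the standard normal density, whose defining feature is the identity $\phi'(x)=-x\phi(x)$. I would write the expectation as an integral and integrate by parts:
\begin{equation*}
\e[Wf(W)] = \int_{\r} x f(x)\phi(x)\,dx = -\int_{\r} f(x)\phi'(x)\,dx = \int_{\r} f'(x)\phi(x)\,dx = \e[f'(W)],
\end{equation*}
where the boundary contribution $f(x)\phi(x)\big|_{-\infty}^{\infty}$ vanishes because $f\in\calC_B^1$ is bounded while $\phi$ decays to $0$ at $\pm\infty$. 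Rearranging gives (ii).

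For (ii)$\Rightarrow$(i), I would fix $z\in\r$ and introduce the Stein equation
\begin{equation*}
f'(x) - x f(x) = \mathbf{1}_{(-\infty,z]}(x) - \Phi(z),
\end{equation*}
with $\Phi$ the standard normal distribution function, whose bounded solution is
\begin{equation*}
f_z(x) = e^{x^2/2}\int_{-\infty}^x \big(\mathbf{1}_{(-\infty,z]}(y) - \Phi(z)\big)e^{-y^2/2}\,dy .
\end{equation*}
One checks that $f_z$ and $f_z'$ are both bounded; note also that $W f_z(W)=f_z'(W)-(\mathbf{1}_{(-\infty,z]}(W)-\Phi(z))$ is bounded, so the relevant expectations are well defined with no moment hypothesis on $W$. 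Applying (ii) to $f_z$ and using that $f_z$ solves the Stein equation,
\begin{equation*}
0 = \e[f_z'(W) - W f_z(W)] = \e\big[\mathbf{1}_{(-\infty,z]}(W) - \Phi(z)\big] = P(W\le z) - \Phi(z),
\end{equation*}
so $P(W\le z)=\Phi(z)$ for every $z$, i.e.\ $W\sim\calN(0,1)$.

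The main obstacle is that $f_z$ is \emph{not} itself a member of $\calC_B^1$: since $\mathbf{1}_{(-\infty,z]}$ is discontinuous, $f_z'$ has a jump at $z$, and (ii) as stated applies only to genuinely $\calC_B^1$ test functions. I would resolve this by a smoothing argument, approximating $\mathbf{1}_{(-\infty,z]}$ by a uniformly bounded sequence $h_n\in\calC_B^1$ converging pointwise to it, solving the Stein equation for each $h_n$ to obtain bona fide $\calC_B^1$ solutions $f_n$, applying (ii) to each $f_n$, and passing to the limit via the uniform bounds on $f_n,f_n'$ and dominated convergence. An alternative route that sidesteps the discontinuity is to take $f(x)=e^{\I t x}$ (whose real and imaginary parts lie in $\calC_B^1$) in (ii), which shows the characteristic function $\psi(t)=\e[e^{\I t W}]$ satisfies $\psi'(t)=-t\,\psi(t)$ with $\psi(0)=1$, forcing $\psi(t)=e^{-t^2/2}$; the price there is first verifying $\e|W|<\infty$ (for instance by testing (ii) against $\arctan$) in order to differentiate under the expectation.
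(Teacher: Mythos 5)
Your proof is correct, and both directions are sound, but your route for (ii)$\Rightarrow$(i) differs from the paper's in a way worth noting. The paper never touches indicator test functions: it solves the Stein equation for an arbitrary bounded \emph{continuous} $h \in \calC_B$, observes that the solution $f_h$ is then genuinely in $\calC_B^1$ (with $\|f_h\|_\infty \le \sqrt{2\pi e}\,\|h\|_\infty$ and $\|f_h'\|_\infty \le 4\|h\|_\infty$), substitutes it into (ii) to obtain $\e h(W) = \e h(Z)$ for all $h \in \calC_B$, and concludes at once because $\calC_B$ is a separating class. By aiming at $h = \mathbf{1}_{(-\infty,z]}$ you create the discontinuity obstacle yourself, and your smoothing patch --- solving the Stein equation for continuous $h_n \to \mathbf{1}_{(-\infty,z]}$ and passing to the limit --- is in effect the paper's argument run on a particular sequence of continuous $h$'s, followed by a dominated convergence step the paper never needs. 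One caution about that limit: take it in the identity $\e h_n(W) = \e h_n(Z)$ (which follows from applying (ii) to $f_n$ and uses only the uniform bound on $h_n$); do not try to pass to the limit in $\e[f_n'(W) - Wf_n(W)]$ itself, since dominating $Wf_n(W)$ would require $\e|W| < \infty$, which you do not yet know at that stage. Your alternative route via the characteristic function ODE $\psi'(t) = -t\,\psi(t)$ is genuinely different from the paper and self-contained; the $\arctan$ trick for establishing $\e|W| < \infty$ is a nice touch, though it relies on reading (ii) as asserting that the expectation exists (finitely) for every $f \in \calC_B^1$, which is indeed the intended interpretation.
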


\begin{proof}
By integration by parts, (i) implies (ii).
If (ii) holds, solve
\begin{equation*}
f'(w) - wf(w) = h(w) - \e h(Z)
\end{equation*}
where $h\in \mathcal{C}_B$ and $Z$ has the standard normal distribution (denoted by $Z \sim N(0,1)$).
Its bounded unique solution $f_h$ is given by
\begin{eqnarray} \label{Stein-eqn-soln} \notag
f_h(w) &=& - e^{\frac{1}{2}w^2}\int_w^\infty e^{-\frac{1}{2}t^2}[h(t)-\e h(Z)]dt \\
&=& e^{\frac{1}{2}w^2}\int_{-\infty}^w e^{-\frac{1}{2}t^2}[h(t)-\e h(Z)]dt.
\end{eqnarray}
Using $\int_w^\infty e^{-\frac{1}{2}t^2}dt \le w^{-1}e^{-\frac{1}{2}w^2}$ for $w > 0$, we can show that $f_h \in \mathcal{C}_B^1$ with $\|f_h\|_\infty \le \sqrt{2\pi e}\|h\|_\infty$ and $\|f'_h\|_\infty \le 4\|h\|_\infty$.
Substituting $f_h$ for $f$ in (ii) leads to
$$
\e h(W) = \e h(Z) \quad \text{for} \quad h \in \mathcal{C}_B.
$$
This proves (i).
\end{proof}

The equation
\begin{equation} \label{char-normal}
\e Wf(W) = \e f'(W)
\end{equation}
for all $f \in \calC_B^1$, which characterizes the standard normal distribution, is called the Stein identity for normal distribution.
In fact, if $W \sim N(0,1)$, (\ref{char-normal}) holds for absolutely continuous $f$ such that $\e|f'(W)| < \infty$.

Let $W$ be a random variable with $\e W = 0$ and $\mathrm{Var}(W) = 1$.
Proposition \ref{prop-characterization} suggests that the distribution of $W$ is ``close'' to $N(0,1)$ if and only if
$$
\e[f'(W) - Wf(W)] \simeq 0
$$
for $f\in \mathcal{C}_B^1$.
How ``close'' the distribution of $W$ is to the standard normal distribution may be quantified by determining how close $\e[ f'(W)-Wf(W)]$ is to $0$.
To this end we define a distance between the distribution of $W$ and the standard normal distribution as follows.
\begin{equation} \label{def-d_G}
d_\scrG(W,Z) := \sup_{h\in\scrG}|\e h(W)-\e h(Z)|
\end{equation}
where $\scrG$ is a separating class and the distance $d_\scrG$ is said to be induced by $\scrG$.  By a separating class $\scrG$, we mean a class of Borel measurable real-valued functions defined on $\r$ such that two random variables, $X$ and $Y$, have the same distribution if $\e h(X) = \e h(Y)$ for $h \in \scrG$.
Such a separating class contains functions $h$ for which both $\e h(X)$ and $\e h(Y)$ exist.

Let $f_h$ be the solution, given by (\ref{Stein-eqn-soln}), of the Stein's equation
\begin{equation} \label{Stein-equation}
f'(w) - wf(w) = h(w) - \e h(Z)
\end{equation}
where $h\in \scrG$.
Then we have
\begin{equation}\label{error-term}
d_\scrG(W,Z) = \sup_{h\in\scrG}|\e [f'_h(W) - Wf_h(W)]|.
\end{equation}
So bounding the distance $d_\scrG(W,Z)$ is equivalent to bounding $\sup_{h\in\scrG}|\e[f'_h(W) - Wf_h(W)]|$, for which we need to study the boundedness properties of $f_h$ and the probabilistic structure of $W$.

The following three separating classes of Borel measurable real-valued functions defined on $\r$ are of interest in normal approximation.
\begin{eqnarray*}
\scrG_W &:=& \{h; |h(u)-h(v)| \le |u-v|\}, \\
\scrG_K &:=& \{h; h(w) =1 ~\text{for} ~w\le x ~ \text{and} = 0  ~\text{for} ~ w> x, x \in \r\}, \\
\scrG_{TV} &:=& \{h; h(w) = I(w\in A), A ~ \text{is a Borel subset of}~ \r\}.
\end{eqnarray*}
The distances induced by these three separating classes are respectively called the Wasserstein distance, the Kolmogorov distance, and the total variation distance.
It is customary to denote $d_{\scrG_W}$, $d_{\scrG_K}$ and $d_{\scrG_{TV}}$ respectively by $d_W$, $d_K$ and $d_{TV}$.

Since for each $h$ such that $|h(x)-h(y)|\le|x-y|$, there exists a sequence of $h_n \in \mathcal{C}^1$ with $\|h_n'\|_\infty \le 1$ such that $\|h_n - h\|_\infty \rightarrow 0$ as $n \rightarrow \infty$, we have
\begin{equation} \label{d_W-h-C^1}
d_W(W,Z) = \sup_{h \in \mathcal{C}^1, \|h'\|_\infty \le 1}|\e h(W) -\e h(Z)|.
\end{equation}
By an application of Lusin's theorem, we also have
\begin{equation} \label{d_TV-h-C}
d_{TV}(W,Z) = \sup_{h\in \mathcal{C}, 0\le h \le 1}|\e h(W)-\e h(Z)|.
\end{equation}
It is also known that
\begin{eqnarray*}
d_{TV}(W,Z) &=& \frac{1}{2}\sup_{\|h\|_\infty \le 1}|\e h(W) - \e h(Z)| \\
&=& \frac{1}{2}\sup_{h \in \mathcal{C}, \|h\|_\infty \le 1}|\e h(W) - \e h(Z)|.
\end{eqnarray*}

It is generally much harder to obtain an optimal bound on the Kolmogorov distance than on the Wasserstein distance.
There is a discussion on this and examples of bounding the Wasserstein distance and the Kolmogorov distance are given in Chen \cite{Chena}.

We now state a proposition that concerns the boundedness properties of the solution $f_h$, given by (\ref{Stein-eqn-soln}), of the Stein equation (\ref{Stein-equation}) for $h$ either bounded or absolutely continuous with bounded $h'$.
The use of these boundedness properties is crucial for bounding the Wasserstein, Kolmogorov and total variation distances.
\begin{proposition} \label{prop-Stein-eqn-soln}
Let $f_h$ be the unique solution, given by (\ref{Stein-eqn-soln}), of the Stein equation (\ref{Stein-equation}), where $h$ is either bounded or absolutely continuous.

1. If $h$ is bounded, then
\begin{equation}\label{bd-bounded}
\|f_h\|_{\infty} \le \sqrt {\pi/2}\|h-\e h(Z)\|_{\infty}, ~~\|f'_h\|_{\infty} \le
2\|h-\e h(Z)\|_{\infty}.
\end{equation}

2. If $h$ is absolutely continuous with bounded $h'$, then
\begin{equation}\label{bd-abs-cont}
\|f_h\|_{\infty} \le 2\|h'\|_{\infty},~~\|f'_h\|_{\infty} \le \sqrt {2/\pi}\|h'\|_{\infty}, ~~\|f''_h\|_{\infty} \le 2\|h'\|_{\infty}.
\end{equation}

3. If $h = I_{(-\infty,x]}$ where $x \in \r$, then, writing $f_h$ as $f_x$,
\begin{equation}\label{bd-indicator-1}
0 < f_x(w) \le  \sqrt{2\pi}/4, ~~|wf_x(w)| \le 1, ~~|f_x'(w)| \le 1,
\end{equation}
and for all $w, u, v \in \r$,
\begin{equation} \label{bd-indicator-2}
|f_x'(w) - f_x'(v)| \le 1,
\end{equation}
\begin{equation} \label{indicator-bd-4}
|(w+u)f_x(w+u)-(w+v)f_x(w+v)|\le(|w|+\sqrt{2\pi}/4)(|u|+|v|).
\end{equation}

4. If $h = h_{x,\epsilon}$ where $x\in \r$, $\epsilon >0$, and
\begin{equation*}
h_{x,\epsilon}(w) =\left\{\begin{array}{ll}
1,&w\le x,\\
0,&w\ge x+\epsilon,\\
1+\epsilon^{-1}(x-w),&x<w<x+\epsilon,
\end{array}\right.
\end{equation*}
then, writing $f_h$ as $f_{x,\epsilon}$, we have for all $w,v, t\in \r$,
\begin{equation}\label{f_{x,epsilon}-bd-1}
0\le f_{x,\epsilon}(w) \le 1, \quad  |f'_{x,\epsilon}(w)|\le1,\quad  |f'_{x,\epsilon}(w) - f'_{x,\epsilon}(v)| \le 1
\end{equation}
and
\begin{eqnarray} \notag
&& |f'_{x,\epsilon}(w+t)-f'_{x,\epsilon}(w)|  \\ \notag
&& \qquad \le (|w|+1)|t|+ \frac{1}{\epsilon}\int_{t\wedge 0}^{t \vee 0}I(x \le w+u\le x+\epsilon)du \\ \label{f_{x,epsilon}-bd-2}
&& \qquad \le (|w|+1)|t| +  I(x - 0\vee t \le w \le x - 0\wedge t + \epsilon).
\end{eqnarray}
\end{proposition}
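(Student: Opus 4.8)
The plan is to handle the four parts in order of increasing difficulty, reducing parts~3 and~4 to the explicit Gaussian form of $f_h$ and ultimately to one another. Throughout I write $\tilde h=h-\e h(Z)$, $\phi(t)=(2\pi)^{-1/2}e^{-t^2/2}$, and $\Phi(w)=\int_{-\infty}^w\phi$, so that \eqref{Stein-eqn-soln} reads $f_h(w)=\phi(w)^{-1}\int_{-\infty}^w\tilde h(t)\phi(t)\,dt=-\phi(w)^{-1}\int_w^\infty\tilde h(t)\phi(t)\,dt$, the two forms agreeing because $\int_\r\tilde h\,\phi=0$. Differentiating recovers the Stein equation $f_h'(w)=wf_h(w)+\tilde h(w)$, which I will use throughout.

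For part~1 I would estimate $f_h$ using whichever representation has the smaller tail: the $\int_w^\infty$ form for $w\ge0$ and the $\int_{-\infty}^w$ form for $w<0$. The bound $e^{w^2/2}\int_w^\infty e^{-t^2/2}\,dt\le\min\{\sqrt{\pi/2},\,1/w\}$ for $w>0$---the first being the value at $w=0$ and the supremum over $w\ge0$ since this function has nonpositive derivative there, the second being the Mills-ratio inequality quoted before \eqref{Stein-eqn-soln}---gives $\|f_h\|_\infty\le\sqrt{\pi/2}\,\|\tilde h\|_\infty$ and $|wf_h(w)|\le\|\tilde h\|_\infty$. Substituting the latter into $f_h'(w)=wf_h(w)+\tilde h(w)$ yields $\|f_h'\|_\infty\le2\|\tilde h\|_\infty$, which is \eqref{bd-bounded}.

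The heart of the matter is part~2, where the bounds must be controlled by $h'$ rather than by $\|\tilde h\|_\infty$, so the (at most linear) growth of $\tilde h$ has to be absorbed. I would integrate by parts in both tail integrals, using the primitives $\Phi$ and $-(1-\Phi)$ of $\phi$, to obtain $\int_{-\infty}^w\tilde h\,\phi=\tilde h(w)\Phi(w)-\int_{-\infty}^w h'\Phi$ and $\int_w^\infty\tilde h\,\phi=\tilde h(w)(1-\Phi(w))+\int_w^\infty h'(1-\Phi)$, and then take the combination that cancels the boundary term $\tilde h(w)$, giving $f_h(w)=-\phi(w)^{-1}\bigl[(1-\Phi(w))\int_{-\infty}^w h'\Phi+\Phi(w)\int_w^\infty h'(1-\Phi)\bigr]$, in which $h$ enters only through $h'$. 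Bounding $|h'|$ by $\|h'\|_\infty$ and using $\int_{-\infty}^w\Phi=w\Phi(w)+\phi(w)$ and $\int_w^\infty(1-\Phi)=\phi(w)-w(1-\Phi(w))$ collapses the bracket to $\phi(w)$, so that $\|f_h\|_\infty\le\|h'\|_\infty$, which is stronger than the first bound in \eqref{bd-abs-cont}. Differentiating this representation and collecting terms expresses $f_h'$ as an integral of $h'$ against an explicit kernel; bounding the $L^1$-norm of that kernel produces the sharp constant $\sqrt{2/\pi}$, and one further differentiation (together with the Stein equation) gives $\|f_h''\|_\infty\le2\|h'\|_\infty$. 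I expect this paragraph---keeping the boundary terms at $\pm\infty$ under control and extracting the sharp constant $\sqrt{2/\pi}$---to be the main analytic obstacle.

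For part~3 I would use the explicit solution: with $h=I_{(-\infty,x]}$, so $\e h(Z)=\Phi(x)$, one has $f_x(w)=\sqrt{2\pi}\,e^{w^2/2}\Phi(w)(1-\Phi(x))$ for $w\le x$ and $f_x(w)=\sqrt{2\pi}\,e^{w^2/2}\Phi(x)(1-\Phi(w))$ for $w\ge x$. Positivity in \eqref{bd-indicator-1} is immediate, while $f_x\le\sqrt{2\pi}/4$, $|wf_x(w)|\le1$, and $|f_x'(w)|\le1$ follow from the standard monotonicity of $w\mapsto wf_x(w)$ and Mills-ratio estimates; the unit jump of $\tilde h$ across $x$ accounts for \eqref{bd-indicator-2}, and \eqref{indicator-bd-4} follows from the splitting $(w+u)f_x(w+u)-(w+v)f_x(w+v)=uf_x(w+u)+w[f_x(w+u)-f_x(w+v)]-vf_x(w+v)$ on applying $|f_x|\le\sqrt{2\pi}/4$ and $|f_x'|\le1$. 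Finally I would reduce part~4 to part~3 through the averaging identity $h_{x,\epsilon}(w)=\epsilon^{-1}\int_x^{x+\epsilon}I_{(-\infty,s]}(w)\,ds$, which by linearity of the Stein equation gives $f_{x,\epsilon}=\epsilon^{-1}\int_x^{x+\epsilon}f_s\,ds$; averaging the bounds of part~3 then yields \eqref{f_{x,epsilon}-bd-1} and the term $(|w|+1)|t|$ in \eqref{f_{x,epsilon}-bd-2}. The remaining term comes from $f_{x,\epsilon}'(w+t)-f_{x,\epsilon}'(w)=[(w+t)f_{x,\epsilon}(w+t)-wf_{x,\epsilon}(w)]+[h_{x,\epsilon}(w+t)-h_{x,\epsilon}(w)]$, where the last difference equals $-\epsilon^{-1}\int_0^t I(x<w+u<x+\epsilon)\,du$; bounding this by the displayed integral, and then by $1$ together with the indicator of the $w$-range for which $[w+0\wedge t,\,w+0\vee t]$ meets $[x,x+\epsilon]$, gives both inequalities in \eqref{f_{x,epsilon}-bd-2}. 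This last increment estimate is the second point where I expect the bookkeeping to require care.
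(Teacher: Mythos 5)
The paper offers no proof of this proposition at all: immediately after the statement it defers to Lemmas 2.3, 2.4 and 2.5 of Chen, Goldstein and Shao \cite{CGS}, and, for the last bound of part 4, to page 2010 of Chen and Shao \cite{CS}. So there is no in-paper argument to compare against; what your proposal does is reconstruct, essentially correctly, the proofs living in those cited sources. The architecture is sound at every stage: the tail-splitting plus Mills-ratio argument for (\ref{bd-bounded}); the integration-by-parts representation of $f_h$ purely in terms of $h'$ for part 2 (your computation in fact yields $\|f_h\|_\infty\le\|h'\|_\infty$, stronger than the stated $2\|h'\|_\infty$, and this is correct --- test it on $h(t)=t$, where $f_h\equiv-1$); the explicit two-sided formula for $f_x$ and the decomposition $uf_x(w+u)+w[f_x(w+u)-f_x(w+v)]-vf_x(w+v)$ for (\ref{indicator-bd-4}); and the averaging identity $f_{x,\epsilon}(w)=\epsilon^{-1}\int_x^{x+\epsilon}f_s(w)\,ds$, which cleanly inherits part 3's bounds and, combined with the Stein equation, gives both inequalities of part 4's increment bound, including the reduction of the indicator term to whether $[w+t\wedge0,\,w+t\vee0]$ meets $[x,x+\epsilon]$.

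Two steps, however, are asserted rather than proved, and one of the stated justifications would not survive being written out. (i) In part 2 the hard constants are only named: you still must show that the $L^1$-norm of your kernel, which works out to $K(w)=2\bigl[\phi(w)+w(2\Phi(w)-1)-w^2\phi(w)^{-1}\Phi(w)(1-\Phi(w))\bigr]$, is maximized at $w=0$ with value $\sqrt{2/\pi}$, and the bound $\|f_h''\|_\infty\le2\|h'\|_\infty$ needs its own Mills-ratio computation starting from $f_h''(w)=f_h(w)+wf_h'(w)+h'(w)$; you flag this as the main obstacle, correctly, but it remains undone. (ii) In part 3, attributing (\ref{bd-indicator-2}) to ``the unit jump of $\tilde h$ across $x$'' is insufficient: since $f_x'>0$ on $(-\infty,x)$ and $f_x'<0$ on $(x,\infty)$, a unit jump alone only gives $|f_x'(w)-f_x'(v)|\le2$ when $w<x<v$. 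The proof needs the two facts that $wf_x(w)$ is increasing on $\r$ and that its range is exactly $\bigl(-(1-\Phi(x)),\,\Phi(x)\bigr)$, an interval of length one; then for $w<x<v$ one gets $f_x'(w)-f_x'(v)=1+\bigl(wf_x(w)-vf_x(v)\bigr)\in[0,1]$, and the same two facts deliver $|wf_x(w)|\le1$ and $|f_x'|\le1$ in (\ref{bd-indicator-1}) as well. These are precisely the contents of Lemma 2.3 of \cite{CGS}; your route coincides with theirs, but at these points your text states conclusions where the cited lemmas do real work.
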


Except for (\ref{f_{x,epsilon}-bd-2}), which can be found on page 2010 in Chen and Shao \cite{CS}, the bounds in Proposition \ref{prop-Stein-eqn-soln} and their proofs can be found in Lemmas 2.3, 2.4 and 2.5 in Chen, Goldstein and Shao \cite{CGS}.

\subsection{Stein identities and error terms}

Let $W$ be a random variable with $\e W = 0$ and $\mathrm{Var}(W) = 1$.
In addition to using the boundedness properties of the solution $f_h$ of the Stein equation (\ref{Stein-equation}), we also need to exploit the probabilistic structure of $W$, in order to bound the error term in (\ref{error-term}).
This is done through the construction of a Stein identity for $W$ for normal approximation.
This is perhaps best understood by looking at a specific example.

Let $X_1, \ldots,X_n$ be independent random variables with $\e X_i = 0$, $\mathrm{Var}(X_i) =\sigma_i^2$ and $\e|X_i|^3 < \infty$.
Let $W = \sum_{i=1}^n X_i$ and $W^{(i)} = W - X_i$ for $i=1,\ldots,n$.
Assume that $\mathrm{Var}(W) = 1$, which implies $\sum_{i=1}^n \sigma_i^2 = 1$.
Let $f \in \mathcal{C}_B^2$.
Using the independence among the $X_i$ and the property that $\e X_i = 0$, we have
\begin{eqnarray} \notag
\e Wf(W) &=& \sum_{i=1}^n \e X_i f(W) =\sum_{i=1}^n \e X_i [f(W^{(i)}+X_i) - f(W^{(i)})] \\ \notag
&=& \sum_{i=1}^n\e\int_0^{X_i} X_i f'(W^{(i)}+t) dt \\ \notag
&=& \sum_{i=i}^n\e\int_{-\infty}^\infty f'(W^{(i)}+t)\hat{K}_i(t)dt \\ \label{Stein-identity-indep-1}
&=& \sum_{i=i}^n\e\int_{-\infty}^\infty f'(W^{(i)}+t)K_i(t)dt
\end{eqnarray}
where
\begin{eqnarray*}
\hat{K}_i(t) &=& X_i[I(X_i>t>0)-I(X_i<t<0)],\\
K_i(t) &=& \e\hat{K}_i(t).
\end{eqnarray*}
It can be shown that for each $i$, $\sigma_i^{-2}K_i$ is a probability density function.
So (\ref{Stein-identity-indep-1}) can be rewritten as
\begin{equation} \label{Stein-identity-indep-2}
\e Wf(W) = \sum_{i=1}^n \sigma_i^2\e f'(W^{(i)}+T_i)
\end{equation}
where $T_1,\ldots,T_n, X_1,\ldots,X_n$ are independent and $T_i$ has the density $\sigma_i^{-2}K_i$.
Both the equations (\ref{Stein-identity-indep-1}) and (\ref{Stein-identity-indep-2}) are Stein identities for $W$ for normal approximation.
From (\ref{Stein-identity-indep-2}) we obtain
\begin{eqnarray} \notag
\e [f'(W) - Wf(W)]&=& \sum_{i=1}^n \sigma_i^2\e[f'(W) - f'(W^{(i)})] \\ \label{Stein-identity-indep-3}
&& - \sum_{i=1}^n \sigma_i^2\e[f'(W^{(i)}+T_i) - f'(W^{(i)})]
\end{eqnarray}
where the error terms on the right hand side provide an expression for the deviation of $\e[f'(W) - Wf(W)]$ from $0$.
Now let $f$ be $f_h$ where $h \in \mathcal{C}^1$ such that $\|h'\|_\infty \le 1$.
Applying Taylor expansion to (\ref{Stein-identity-indep-3}), and using (\ref{d_W-h-C^1}) and (\ref{bd-abs-cont}), we obtain
\begin{eqnarray*}
d_W(W,Z) &\le& \sum_{i=1}^n \sup_{h\in \mathcal{C}^1, \|h'\|_\infty \le 1} \sigma_i^2\e|X_i| \|f_h''\|_\infty \\
&& +\sum_{i=1}^n \sup_{h\in \mathcal{C}^1, \|h'\|_\infty \le 1} \sigma_i^2\e|T_i|\|f_h''\|_\infty \\
&\le& 2\sum_{i=1}^n\sigma_i^2\e|X_i| +2\sum_{i=1}^n\sigma_i^2\e|T_i|.
\end{eqnarray*}
Since $\sigma_i^2\e|X_i| \le (\e|X_i|^3)^{2/3})(\e|X_i|^3)^{1/3} = \e|X_i|^3$ and $\sigma_i^2\e|T_i| = \sigma_i^2(1/2\sigma_i^2)\e|X_i|^3 = (1/2)\e|X_i|^3$, we have
\begin{equation} \label{bd-d_W-indep}
d_W(W,Z) \le 3\sum_{i=1}^n\e|X_i|^3.
\end{equation}
The bound in (\ref{bd-d_W-indep}) is of optimal order.
However, it is much harder to obtain a bound of optimal order for the Kolmogorov distance.
Proofs of such a bound on $d_K(W,Z)$ can be found in Chen \cite{Chena} and Chen, Goldstein and Shao \cite{CGS}.
Stein's identities for locally dependent random variables and other dependent random variables can be found in Chen and Shao \cite{CS} and Chen, Goldstein and Shao \cite{CGS}.

\subsection{Integration by parts}

Let $W$ be a random variable with $\e W = 0$ and $\mathrm{Var}(W) = 1$.
In many situations of normal approximation, the Stein identity for $W$ takes the form
\begin{equation} \label{Stein-id-T}
\e Wf(W) = \e Tf'(W)
\end{equation}
where $T$ is random variable defined on the same probability as $W$ such that $\e|T| < \infty$, and $f$ an absolutely continuous function for which the expectations exist.
Typical examples of such situations are cases when $W$ is a functional of Gaussian random variables or of a Gaussian process.
In such situations, the Stein identity (\ref{Stein-id-T}) is often constructed using integration by parts as in the case of the Stein identity for $N(0,1)$.

By letting $f(w) = w$, we obtain $\e T = \e W^2 = 1$.
Let $\scrF$ be a $\sigma$-algebra with respect to which $W$ is measurable.
From (\ref{Stein-id-T}),
\begin{eqnarray*}
\e[f'(W) - Wf(W)] &=& \e[f'(W)(1-T)] \\
&=& \e[f'(W)\e(1-T\big|\scrF)].
\end{eqnarray*}
Now let $f = f_h$ where $h \in \scrG$, a separating class of functions.
Assume that $||f'_h||_\infty < \infty$.
Then by ({\ref{def-d_G}),
\begin{eqnarray} \notag
d_\scrG(W,Z) &=& \sup_{h\in\scrG}|\e[f'_h(W) - Wf_h(W)]| \\ \notag
&\le& \left(\sup_{h\in\scrG}\|f'_h\|_\infty\right)\e|\e(1-T\big|\scrF)| \\ \notag
&\le& \left(\sup_{h\in\scrG}\|f'_h\|_\infty\right)\sqrt{\mathrm{Var}(\e(T\big|\scrF))},
\end{eqnarray}
where for the last inequality it is assumed that $\e(T\big|\scrF)$ is square integrable.
By Proposition \ref{prop-Stein-eqn-soln},
\begin{equation}
\sup_{h\in\scrG}\|f'_h\|_\infty \le \left\{\begin{array}{ll}
\sqrt{2/\pi}, & \quad \scrG = \scrG_W, \\  \label{bd-f'_h-G}
1, & \quad \scrG = \scrG_K,\\
2, & \quad \scrG = \scrG_{TV}.
\end{array}\right.
\end{equation}
This implies
\begin{equation} \label{bd-d_WKTV}
d(W,Z) \le \theta \e|\e(1-T\big|\scrF)| \le \theta \sqrt{\mathrm{Var}(\e(T\big|\scrF))}
\end{equation}
where
\begin{equation}
\theta = \left\{\begin{array}{ll}
\sqrt{2/\pi}, & \quad d(W,Z) = d_W(W,Z), \\  \label{bd-d_WKTC-theta}
1, & \quad d(W,Z) = d_K(W,Z),\\
2, & \quad d(W,Z) = d_{TV}(W,Z).
\end{array}\right.
\end{equation}

For the rest of this section, we will present two approaches to the construction of the Stein identity (\ref{Stein-id-T}).

Let $Z \sim N(0,1)$ and let $\psi$ be an absolutely continuous function such that $\e \psi(Z) = 0$, $\mathrm{Var}(\psi(Z)) =1$ and $\e \psi'(Z)^2 < \infty$.
Define $W=\psi(Z)$.
Following Chatterjee \cite{Chat}, we use Gaussian interpolation to construct a Stein identity for $\psi(Z)$.
Let $f$ be absolutely continuous with bounded derivative, which implies $|f(w)| \le C(1+|w|)$ for some $C >0$.
Let $Z'$ be an independent copy of $Z$ and let $W_t = \psi(\sqrt{t}Z + \sqrt{1-t}Z')$ for $0\le t \le 1$.
Then we have
\begin{eqnarray} \notag
\hspace*{-10pt}\e Wf(W)&=& \e(W_1 - W_0)f(W) = \e\int_0^1 f(W)\frac{\partial W_t}{\partial t}dt \\ \label{eq-W_t}
&=& \e\int_0^1 f(W)\left(\frac{Z}{2\sqrt{t}} - \frac{Z'}{2\sqrt{1-t}}\right)\psi'(\sqrt{t}Z + \sqrt{1-t}Z')dt. \qquad
\end{eqnarray}
Let
\begin{eqnarray*}
U_t &=& \sqrt{t}Z+\sqrt{1-t}Z', \\
V_t &=& \sqrt{1-t}Z-\sqrt{t}Z'.
\end{eqnarray*}
Then $U_t \sim N(0,1)$, $V_t \sim N(0,1)$, and $U_t$ and $V_t$ are independent.
This together with $|f(w)|\le C(1+|w|)$ implies the integrability of the right hand side of (\ref{eq-W_t}).
Solving for $Z$, we obtain
$$
Z = \sqrt{t}U_t + \sqrt{1-t}V_t.
$$
Equation (\ref{eq-W_t}) can be rewritten as
\begin{eqnarray*}
\e Wf(W) &=& \frac{1}{2}\int_0^1\frac{1}{\sqrt{t(1-t)}}\e f(\psi(\sqrt{t}U_t+\sqrt{1-t}V_t))V_t \psi'(U_t) \\
&=& \frac{1}{2}\int_0^1\frac{1}{\sqrt{t}} \e f'(W)\psi'(Z)\psi'(U_t)dt \\
&=& \frac{1}{2}\e\left[f'(W)\e\left(\int_0^1\frac{1}{\sqrt{t}}\psi'(Z)\psi'
(\sqrt{t}Z+\sqrt{1-t}Z')dt\big| Z\right)\right]
\end{eqnarray*}
where for the second equality we used the indepedence of $U_t$ and $V_t$, and applied the characterization equation for $N(0,1)$ to $V_t$.
Note that the characterizaton equation is obtained by integration by parts.
Hence we have for absolutely continuous $f$ with bounded derivative,
\begin{equation} \label{eq-T(Z)}
\e Wf(W) = \e T(Z)f'(W)
\end{equation}
where
\begin{equation} \label{def-T(x)}
T(x) = \int_0^1 \frac{1}{2\sqrt{t}}\e \left[\psi'(x)\psi'(\sqrt{t}x+\sqrt{1-t}Z')\right]dt.
\end{equation}
In \cite{Chat}, Chatterjee obtained a multivariate version of (\ref{eq-T(Z)}) where $\psi: \r^d \longrightarrow \r$ for $d \ge 1$.

Here is a simple application of (\ref{bd-d_WKTV}) and (\ref{eq-T(Z)}).
Let $X_1,\ldots,X_n$ be independent and identically distributed as $N(0,1)$.
Let
$$
W = \frac{\sum_{i=1}^n (X_i^2 - 1)}{\sqrt{2n}}.
$$
The random variable $W$ has the standardized $\chi^2$ distribution with $n$ degrees of freedom.
Let $\psi(X_i) = {\displaystyle \frac{X_i^2 - 1}{\sqrt{2n}}}$.
Then ${\displaystyle W = \sum_{i=1}^n \psi(X_i)}$.
Let $W^{(i)}= W - \psi(X_i)$ and let $X_i',\ldots,X_n'$ be an independent copy of $X_1,\ldots,X_n$. By the independence of $X_1,\ldots,X_n$, and by (\ref{eq-T(Z)}) and (\ref{def-T(x)}),
\begin{eqnarray*}
\e Wf(W) &=& \sum_{i=1}^n \e \psi(X_i)f'(W^{(i)}+g(X_i)) = \sum_{i=1}^n \e T(X_i)f'(W) \\
&=& \e Tf'(W)
\end{eqnarray*}
where
\begin{eqnarray*}
T &=& \sum_{i=1}^n T(X_i), \\
T(X_i) &=& \int_0^1 \frac{1}{2\sqrt{t}}\e \left(\psi'(X_i)\psi'(\sqrt{t}X_i+\sqrt{1-t}X_i')\big | X_i\right)dt \\
&=& \int_0^1 \frac{1}{n\sqrt{t}}\e\left(X_i(\sqrt{t}X_i+\sqrt{1-t}X_i')\big|X_i\right)dt = \frac{X_i^2}{n}.
\end{eqnarray*}
Therefore
$$
\mathrm{Var}(T) = \sum_{i=1}^n\mathrm{Var}\left(\frac{X_i^2}{n}\right) = \frac{2}{n}.
$$
By (\ref{bd-d_WKTV}) and since $\mathrm{Var}(\e(T\big|\scrF)) \le \mathrm{Var}(T)$, it follows that
\begin{equation} \label{bd-d_WKTV-sum-g}
d_W(W,Z) \le \frac{2}{\sqrt{\pi}}\frac{1}{\sqrt{n}},  \quad d_K(W,Z) \le \frac{\sqrt{2}}{\sqrt{n}}, \quad d_{TV}(W,Z) \le \frac{2\sqrt{2}}{\sqrt{n}}.
\end{equation}
Since $d_K(W,Z) \le d_{TV}(W,Z)$, this result is stronger and more general than what can be deduced from the Berry-Esseen theorem, which yields only the Kolmogorov bound.

We now present another approach to the construction of the Stein identity (\ref{eq-T(Z)}).   Consider $\calL^2 = L^2\left(\r, \frac{1}{\sqrt{2\pi}}e^{- \frac{x^2}{2}}dx\right)$ endowed with the inner product,
$$
\langle f,g\rangle =\int_\r f(x)g(x)\frac{1}{\sqrt{2\pi}}e^{-\frac{x^2}{2}}dx.
$$
Let $L$ be the Ornstein-Uhlenbeck operator defined on $\calD^2 \subset \calL^2$, where
$$
\calD^2 = \{g: g' ~\text{is absolutely continuous}, ~ g, g', g'' \in \mathcal{L}^2\}.
$$
That is,
$$
L = \frac{d^2}{d^2x} - x\frac{d}{dx}.
$$
Let $f \in \calL^2$ be absolutely continuous such that $f' \in \calL^2$ and let $g \in \text{Dom}(L)$.
Assume that $g'(x)f(x)e^{-x^2/2}\rightarrow 0$ as $x \rightarrow \pm$.
Then we have the integration by parts formula,
\begin{equation} \label{int-by-parts}
\langle Lg,f\rangle = - \langle g', f'\rangle.
\end{equation}
 Let $\psi \in \calL^2$ be absolutely continuous such that $\e \psi(Z) =0$, $\mathrm{Var}(\psi(Z))=1$ and $\psi'$ is bounded by a polynomial.
 A solution $g_\psi$ of the equation
$$
Lg = \psi
$$
is given by
\begin{eqnarray*}
g_\psi(x) &=& L^{-1}\psi = - \int_0^\infty P_t\psi dt \\
&=& - \int_0^\infty \e \psi(e^{-t}x +\sqrt{1-e^{-2t}}Z)dt
\end{eqnarray*}
where $L^{-1}$ is a pseudo-inverse of $L$, $(P_t)_{t\ge0}$ is the Ornstein-Uhlenbeck semigroup defined on $\calL^2$, and $Z$ and $Z'$ are independent, each distributed as $N(0,1)$.
By the integration by parts formula (\ref{int-by-parts}), we have for absolutely continuous $f$ such that $\|f'\|_\infty < \infty$,
\begin{eqnarray} \notag
&& \e \psi(Z)f(\psi(Z))  \\  \notag
&& \quad = \langle \psi,f(\psi)\rangle = \langle LL^{-1}\psi,f(\psi)\rangle \\  \notag
&& \quad =- \langle (L^{-1}\psi)', f'(\psi)\psi'\rangle  = - \langle g_\psi',f'(\psi)\psi'\rangle  \\  \notag
&& \quad = \e \int_0^\infty e^{-s}f'(\psi(Z))\psi'(Z)\e \left(\psi'(e^{-s}x + \sqrt{1-e^{-2s}}Z')\big|Z\right) ds  \\ \label{eq-T(Z)-2}
&& \quad = \e\int_0^1 \frac{1}{2\sqrt{t}}f'(\psi(Z))\psi'(Z)\e\left(\psi'(\sqrt{t}Z + \sqrt{1-t}Z')\big|Z\right)dt
\end{eqnarray}
where the last equality follows from the change of variable, $t = e^{-2s}$.
By using the fact that polynomials are dense in $\calL^2$, (\ref{eq-T(Z)-2}) can be shown to hold for $\e \psi'(Z)^2 < \infty$.
By letting $W = \psi(Z)$, the identity (\ref{eq-T(Z)-2}) is indeed the same as (\ref{eq-T(Z)}).

This approach of using the integration by parts formula (\ref{int-by-parts}) to construct a Stein identity is a special case of that of Nourdin and Peccati \cite{NP}, who considered $L^2(\Omega) = L^2(\Omega, \scrF, P)$, where $\scrF$ is the complete $\sigma$-algebra generated by a standard Brownian motion defined on $\Omega$.
The integration by parts formula in this setting is
\begin{equation} \label{ibp-Omega}
\langle LF, G \rangle_{L^2(\Omega) }= - \e[\langle DF,DG \rangle_{L^2(\r_+,dx)}]
\end{equation}
where $F\in \mathbb{D}^{2,2}$, $G \in \mathbb{D}^{1,2}$, $L$ is the Ornstein-Uhlenbeck operator defined on $\mathbb{D}^{2,2} \subset L^2(\Omega)$ and $D$ the Malliavin derivative with the domain $\mathbb{D}^{1,2}$.

In this chapter, we will use a similar integration by parts formula in white noise analysis involving the Hida derivative to obtain a general error bound in the normal approximation for white noise functionals.
In the process, it is found necessary to extend the domain of the Hida derivative to allow application of the integration by parts formula to the normal approximation.

\section{Hida Distributions}

In this and the next sections, we will give a brief description of Hida's white noise calculus based on Lee's reformulation on the abstract Wiener space $(\calS_0,\,\calS_{-p})$ for $p>\frac{1}{2}$.
For more details, see \cite{{La},{Lb},{Lc},{Ld}}.

\subsection{White noise space}

Let ${\bf A}=-(d/dt)^2+1+t^2$ be a densely defined self-adjoint operator on the $L^2$-space $L^2(\r,dt)$ with respect to the Lebesgue measure $dt$, and $\{h_n;\,n\in\n_0\equiv\n\cup\{0\})\}$ be a complete orthonormal set (CONS for abbreviation) for $L^2(\r,dt)$, consisting of all Hermite functions on $\r$, formed by the eigenfunctions of $\bf A$ with corresponding eigenvalues $2n+2$, $n\in\n_0$, where
$$
h_n(t)=\frac{1}{\sqrt{\sqrt{\pi}2^nn!}}\,H_n(t)\,e^{-t^2/2},
$$
$H_n(t)=(-1)^ne^{t^2}\frac{d^n}{dt^n}e^{-t^2}$ being the Hermite polynomial of the degree $n$. \par

Let $\calS$ be the Schwartz space of real-valued, rapidly decreasing, and infinitely differentiable functions on $\r$ with its dual $\calS'$, the spaces of tempered distributions.
For each $p\in\r$, let $\calS_p$ denote the space of all functions $f$ in $\calS'$ satisfying the condition that
$$
|f|_p^2\equiv\sum_{n=0}^\infty\,(2n+2)^{2p}\,|( f,\,h_n) |^2<+\infty,
$$
where $(\cdot,\cdot)$ always denotes the $\calS'$-$\calS$ pairing from now on.
Then $\calS_p$, $p\in\r$, forms a real Hilbert space with the inner product $\l\cdot,\cdot\g_p$ induced  by $|\cdot|_p$.
The dual space $\calS_p'$, $p\in\r$, is unitarily equivalent to $\calS_{-p}$.
Applying the Riesz representation theorem, we have the continuous inclusions:
$$
\calS\subset \calS_q\subset \calS_p\subset L^2(\r,dt)= \calS_0\subset \calS_{-p}\subset \calS_{-q}\subset \calS',
$$
where $0<p<q<+\infty$, $\calS$ is the projective limit of $\{\calS_p;\,p>0\}$.
In fact, $\calS$ is a nuclear space, and thus $\calS'$ is the inductive limit of $\{\calS_{-p};\, p>0\}$.

A well known fact is that the Minlos theorem (see \cite{GV}) guarantees the existence of the white noise measure $\mu$ on $(\calS',\scrB(\calS'))$, $\scrB(\calS')$ being the Borel $\sigma$-field of $\calS'$, the characteristic functional of which is given by
\begin{equation}\label{2.1}
\int_{\calS'}e^{\I\,(x,\,\eta)}\,\mu(dx)=\,e^{-\frac{1}{2}|\eta|_0^2},~~~\mbox{for all $\eta\in\calS$},
\end{equation}
where $\I=\sqrt{-1}$.
One can easily show that the measurable support of $\mu$ is contained in $\calS_{-p}$ and $\mu$ coincides with the Wiener measure on the abstract Wiener space $(\calS_0,\,\calS_{-p})$ for $p>\frac{1}{2}$.

As a random variable on $(\calS',\scrB(\calS'),\mu)$, $(\cdot,\,\eta)$ has the normal distribution with mean $0$ and variance $|\eta|_0^2$ for any $\eta\in\calS$.
For each $\rho\in \calS_0$, choose a sequence $\{\eta_n\}\subset\calS$ so that $\eta_n\to\rho$ in $\calS_0$.
Then it follows from (\ref{2.1}) that $\{(\cdot,\,\eta_n)\}$ forms a Cauchy sequence in $(L^2)\equiv L^2(\calS',\mu)$, the $L^2$-space of all complex-valued square-integrable functionals on $\calS'$ with respect to $\mu$.
Denote by $\l\cdot,\,\rho\g$ the $L^2$-limit of $\{(\cdot,\,\eta_n)\}$.
Then $\l\cdot,\,\rho\g\sim\calN(0,\,|\rho|_0^2)$ for $\rho\in \calS_0$.
Consequently, the Brownian motion $B=\{B(t);\,t\in\r\}$ on $(\calS',\calB(\calS'),\mu)$ can be represented by
$$
   B(t\,;\,x)=\left\{
   \begin{array}{rl}
   \l x,\,{1}_{[0,\,t]}\g,\quad &\makebox{if $t\geq 0$}\\
   -\l x,\,{1}_{[t,\,0]}\g,\quad &\makebox{if $t<0$},~x\in\calS'.
\end{array}\right.
$$
Taking the time derivative formally, we get $\dot{B}(t\,;\,x)=\,x(t)$, $x\in\calS'$.
Thus an element $x\in\calS'$ is viewed as a sample path of white noise $\dot{B}(t,\,x)$ and the space $(\calS',\,\mu)$ is referred to as a white noise space.

\subsection{The $S$-transform}

The $S$-transform $S\varphi$ of $\varphi\in (L^2)$ is a Bargmann-Segal analytic functional on $\scrC\calS_0$ given by
$$
   S\varphi(\eta)=\,e^{-\frac{1}{2}\int_{-\infty}^\infty \eta(t)^2\,dt}\int_{\calS'}\varphi(x)\,e^{\l x,\,\eta\g}\,\mu(dx),~~~\eta\in\scrC\calS_0.
$$
We should note that $S\varphi(\eta)=\mu\varphi(\eta)$ for $\eta\in\calS_0$, where $\mu\varphi=\mu\ast\varphi$, the convolution of $\mu$ and $\varphi$.

Let $\varphi\in (L^2)$ be given.
Then it follows from \cite{{La},{Lb}} that $D^nS\varphi(0)$ is a symmetric $n$-linear operator of Hilbert-Schmidt type on $\scrC\calS_0\times\cdots\times\scrC\calS_0$, where $D$ is the Fr\'echet derivative of $S\varphi$.
Also, it admits the Wiener-It\^o decomposition
$$
 \varphi(x)=\,\sum_{n=0}^\infty\,\frac{1}{n!}:D^nS\varphi(0)x^n:,
$$
and
$$
\|\varphi\|_{2,0}^2\equiv\int_{\calS'}|\varphi(x)|^2\,\mu(dx)=~\sum_{n=0}^\infty\,\frac{1}{n!}\,
\|D^nS\varphi(0)\|^2_{\calH\calS^n(\scrC\calS_0)},
$$
where $\|\cdot\|_{\calH\calS^n(K)}$ denotes the Hilbert-Schmidt operator norm of a $n$-linear functional on a Hilbert space $K$, and for any symmetric $n$-linear Hilbert-Schmidt operator $T$ on $\scrC\calS_0\times\cdots\times\scrC\calS_0$,
\begin{align*}
:Tx^n\!:\,\equiv&\int_{\calS'}T(x+\I\,y)^n\,\mu(dy)\\
=&\sum_{j_1,\ldots,j_n=0}^\infty T(h_{j_1},\ldots,h_{j_n})\int_{\calS'}\prod_{k=1}^n\,(x+\I\,y,\,h_{j_k})\,\mu(dy),
\end{align*}
which is in $(L^2)$ with $\|:Tx^n\!:\|_{2,0}=\sqrt{n!}\,\| T\|_{\calH\calS^n(\scrC\calS_0)}$ (see \cite{La}).
In fact, the $S$-transform is a unitary operator from $(L^2)$ onto the Bargmann-Segal-Dwyer space $\calF^1(\scrC\calS_0)$ over $\scrC\calS_0$ (see \cite{Ld}).

\begin{remark}
Let $H$ be a complex Hilbert space. For $r>0$, denote by $\calF^r(H)$ the class of analytic functionals on $H$ with norm $\|\cdot\|_{\calF^r(H)}$ such that
$$
\|f\|_{\calF^r(H)}^2=~\sum_{n=0}^\infty\,\frac{r^n}{n!}\,\|D^nf(0)\|_{\calH\calS^n(H)}^2<+\infty,
$$
called the Bargmann-Segal-Dwyer space.
Members of $\calF^r(H)$ are called Bargmann-Segal analytic functionals.
See also \cite{Ld}.
\end{remark}

\subsection{Test and generalized white noise functionals}

To study nonlinear functionals of white noise, Hida originally established a test-generalized functions setting $(L^2)^+\subset(L^2)\subset(L^2)^-$ (see \cite{{Hida},{HidaSi},{Si}}).
After Hida, Kubo and Takenaka \cite{KT} reformulated Hida's theory by taking different setting $(\calS)\subset (L^2)\subset (\calS)'$.
The space $(\calS)$ is an infinite dimensional analogue of the Schwartz space $\calS$ on $\r$.
We briefly describe as follows.

For $p\in\r$ and $\varphi\in (L^2)$, define
\begin{equation}\label{2.2}
\|\varphi\|_{2,p}^2=\sum_{n=0}^\infty\,\frac{1}{n!}\,\|D^nS\varphi(0)\|^2_{\calH\calS^n(\calS_{-p})}.
\end{equation}
and let $(\calS_p)$ be the completion of the collection $\{\varphi\in (L^2);\,\|\varphi\|_{2,p}<+\infty\}$ with respect to $\|\cdot\|_{2,p}$-norm.
Then $(\calS_p)$, $p\in\r$, is a Hilbert space with the inner product induced by $\|\cdot\|_{2,p}$-norm.
For $p,q\in\r$ with $q\geq p$, $(\calS_q)\subset(\calS_p)$ and the embedding $(\calS_q)\hookrightarrow(\calS_p)$ is of Hilbert-Schmidt type, whenever $q-p>1/2$.
Set $(\calS)=\bigcap_{p>0}\,(\calS_p)$ endowed with the projective limit topology.
Then $(\calS)$ is a nuclear space and will serve as the space of test white noise functionals.
The dual $(\calS)'$ of $(\calS)$ is the space of generalized white noise functionals (or often called Hida distributions).
By identifying the dual $(\calS_p)'$ of $(\calS_p)$, $p>0$, with $(\calS_{-p})$, we have a Gel'fand triple $(\calS)\subset (L^2)\subset(\calS)'$ and the continuous inclusions: for $p\geq q> 0$,
$$
 (\calS)\subset(\calS_p)\subset(\calS_q)\subset (L^2)\subset(\calS_{-q})\subset(\calS_{-p})\subset(\calS)',
$$
where $(\calS)'$ is the inductive limit  of the $(\calS_{-p})$, $p>0$.
Hereafter, the dual pairing of $(\calS)'$ and $(\calS)$ will be denoted by $\ll\cdot,\cdot\gg$.
One notes that $(\calS_p)$, $p\geq 0$, is the domain of the second quantization $\Gamma({\bf A}^p)$ of ${{\bf A}^p}$ and $(\calS_0)=(L^2)$.

For $\varphi\in (\calS_p)$ with $p>0$, it is natural to extend the domain of $D^nS\varphi(0)$ to $\scrC\calS_{-p}\times\cdots\times\scrC\calS_{-p}$ and then define
$$
S\varphi(z)=\sum_{n=0}^\infty\,\frac{1}{n!}\,D^nS\varphi(0)z^n,~~z\in\scrC\calS_{-p}.
$$
It is clear that
$$
\big|S\varphi(z)\big|\leq\,\|\varphi\|_{2,p}\cdot e^{\frac{1}{2}|z|_{-p}^2},~~~z\in\scrC\calS_{-p}.
$$

On the other hand, by directly computing (\ref{2.2}), it is easy to see that $e^{(\cdot,\,\eta)}\in (\calS_p)$ for any $\eta\in\scrC\calS_p$, $p>0$.
We then extend the $S$-transform to a function $F\in(\calS_{-p})$, $p>0$, by setting
$$
SF(\eta)=\,e^{-\frac{1}{2}\int_{-\infty}^\infty \eta(t)^2\,dt}\ll F,\,e^{(\cdot,\,\eta)}\gg,~~~\eta\in\scrC\calS_p,
$$
where $\|e^{-\frac{1}{2}\int_{-\infty}^\infty \eta(t)^2\,dt+(\cdot,\,\eta)} \|_{2,p}=e^{\frac{1}{2}|\eta|_p^2}$.
In fact, the $S$-transform is a unitary operator from $(\calS_p)$ onto $\calF^1(\scrC\calS_{-p})$ for any $p\in\r$ (see \cite{Ld}).
In other words, for any $F\in (\calS_p)$ with $p\in\r$,
\begin{equation*}
\| F\|_{2,p}^2=\sum_{n=0}^\infty\,\frac{1}{n!}\,\|D^nSF(0)\|^2_{\calH\calS^n(\calS_{-p})}.
\end{equation*}

\begin{remark}
The image $SF$, $F\in (\calS)'$, is also called the $U$-functional associated with $F$.
Hida then studied the white noise calculus of generalized white noise functionals through their $U$-functionals.
See \cite{{Hida},{HKPS},{HidaSi},{Si}}.
\end{remark}

\vskip 0.1cm

\noindent
{\it$-$~Analytic version of $(\calS)$}

\vskip 0.2cm

For $p\in\r$, denote by $\calA_p$ the space of analytic functions $f$ defined on $\scrC\calS_{-p}$ satisfying the exponential growth condition:
$$
\| f\|_{\calA_p}\equiv\sup\{| f(z)|e^{-\frac{1}{2}|z|_{-p}^2};\,z\in\scrC\calS_{-p}\}<+\infty.
$$
Then $(\calA_p,\,\|\cdot\|_{\calA_p})$ is a Banach space and, by restriction, $\calA_p$ is continuously embedded in $\calA_q$ for $p>q$.
Set $\calA_\infty=\bigcap_{p\in\r}\,\calA_p$ endowed with the projective limit topology.
Then $\calA_\infty$ becomes a locally convex topological algebra.

For any $\varphi\in (\calS_p)$ with $p>\frac{1}{2}$, let
$$
\tilde{\varphi}(z)=\,\sum_{n=0}^\infty\,\frac{1}{n!}\int_{\calS_{-p}}D^nS\varphi(0)(z+\I\,y)^n\,
\mu(dy),~~z\in\scrC\calS_{-p},
$$
which is well-defined since ${\rm supp}\,(\mu)\subset\calS_{-p}$ with $p>\frac{1}{2}$.
One notes that the above sum converges absolutely and uniformly on each bounded set in $\scrC\calS_{-p}$.
In addition, $\varphi=\tilde{\varphi}$ almost all in $\calS'$ with respect to $\mu$.

\vskip 0.2cm

\begin{theorem} \label{thm-lee} \cite{{Ld},{Lf},{Le}}~\
\begin{enumerate}[(iii)]
\item[$(i)$]
Let $p>1,\,r>1$ and $s>\frac{1}{2}$.
Then, for any $\varphi\in (\calS_p)$, $\tilde{\varphi}$ is analytic on $\scrC\calS_{-p}$, and there are two constants $\alpha_p>0$ and $\beta_{p,r}>0$ such that
$$
 \alpha_p\cdot\|\tilde{\varphi}\|_{\calA_{p-s}}\leq\,\| \varphi\|_{2,p}\leq\,\beta_{p,r}\cdot\|\tilde{\varphi}\|_{\calA_{p+r}}.
$$

\item[$(ii)$]
Let $p>0$ and $r>\frac{1}{2}$.
Then, for any $\varphi\in (\calS_{p+r})$,
$$
  C_r^{-1}\|\varphi\|_{2,p}\leq\,\|S\varphi\|_{\calA_{p+r}}\leq\,\|\varphi\|_{2,p+r},
$$
where $C_r^{3/2}=\int_{\calS'}e^{|x|_{-r}^2}\,\mu(dx)$.

\item[$(iii)$]
Let $p\in\r$.
There exists $\alpha_p>0$ such that
$$
   \alpha_p\cdot\| f\|_{\calF^1(\scrC\calS_{-p+2})}\leq\,\| f\|_{\calA_p}\leq \,\| f\|_{\calF^1(\scrC\calS_{-p})}
$$
for any analytic function $f$ on $\scrC\calS_{-p}$.
\end{enumerate}
\end{theorem}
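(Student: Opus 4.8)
The plan is to establish part~(iii) first, as the analytic core, and then to read off (ii) and (i) from it together with two facts from the preceding material: the $S$-transform is unitary from $(\calS_p)$ onto $\calF^1(\scrC\calS_{-p})$, so that $\|\varphi\|_{2,p}=\|S\varphi\|_{\calF^1(\scrC\calS_{-p})}$; and, interchanging sum and integral in its definition, $\tilde\varphi$ is the Gaussian convolution $\tilde\varphi(z)=\int_{\calS_{-p}}S\varphi(z+\I y)\,\mu(dy)$, whose inverse $S\varphi(z)=\int_{\calS_{-p}}\tilde\varphi(z+y)\,\mu(dy)$ I would verify through the Hermite generating function. I work in the Hermite coordinates, in which distinct $h_j$ are orthogonal in $\calS_{-p}$ with $|h_j|_{-p}=(2j+2)^{-p}$.

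For the upper bound in~(iii) I expand $f(z)=\sum_n\frac1{n!}D^nf(0)z^n$, bound each term by $\|D^nf(0)\|_{\calH\calS^n(\scrC\calS_{-p})}|z|_{-p}^n$, and apply Cauchy--Schwarz against the weights $1/n!$; this yields $|f(z)|\le\|f\|_{\calF^1(\scrC\calS_{-p})}\,e^{\frac12|z|_{-p}^2}$, i.e. $\|f\|_{\calA_p}\le\|f\|_{\calF^1(\scrC\calS_{-p})}$. The lower bound is the substantive direction: I must recover the full Hilbert--Schmidt norm over the smaller space $\scrC\calS_{-p+2}$ from the single sup bound $|f(z)|\le\|f\|_{\calA_p}e^{\frac12|z|_{-p}^2}$. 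The first step is a one--variable Cauchy estimate along a ray $z=\lambda h$, giving $\frac1{m!}|D^mf(0)h^m|\le\rho^{-m}\|f\|_{\calA_p}e^{\frac12\rho^2|h|_{-p}^2}$; minimizing over $\rho$ (at $\rho^2=m/|h|_{-p}^2$) produces the diagonal bound $|D^mf(0)h^m|\le c_m|h|_{-p}^m\|f\|_{\calA_p}$ with $c_m=m!\,(e/m)^{m/2}$.

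The crux is to pass from these diagonal bounds to $\|D^nf(0)\|_{\calH\calS^n(\scrC\calS_{-p+2})}^2=\sum_{j_1,\dots,j_n}\prod_k(2j_k+2)^{2(p-2)}|D^nf(0)(h_{j_1},\dots,h_{j_n})|^2$. I would avoid polarization, which costs an uncontrolled factor like $e^n$, and instead apply a multivariable Cauchy formula in the orthogonal coordinates: restricting $f$ to the span of the distinct indices of a tuple, the orthogonality $|\sum_l\lambda_l h_{i_l}|_{-p}^2=\sum_l|\lambda_l|^2(2i_l+2)^{-2p}$ kills the cross terms, so a tuple with distinct indices $i_1,\dots,i_s$ of multiplicities $m_1,\dots,m_s$ obeys $|D^nf(0)(h_{j_1},\dots,h_{j_n})|\le\|f\|_{\calA_p}\prod_l c_{m_l}(2i_l+2)^{-m_l p}$. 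Grouping the Hilbert--Schmidt sum by multisets, the $n!/\prod_l m_l!$ orderings cancel against the weight $1/n!$ defining $\calF^1$, while the identity $c_m^2/m!=m!(e/m)^m=O(\sqrt m)$ reduces the whole double series to the Euler product $\prod_j\big(1+\sum_{m\ge1}O(\sqrt m)(2j+2)^{-4m}\big)$. Its convergence rests on $\sum_j(2j+2)^{-4}<\infty$, that is on ${\bf A}^{-2}$ being Hilbert--Schmidt; this is where the shift is spent, the value $2$ sitting comfortably past the general threshold $s>\frac12$ at which ${\bf A}^{-s}$ first becomes Hilbert--Schmidt. I expect this coefficient-to-Hilbert--Schmidt passage, with its multiplicity bookkeeping, to be the main obstacle.

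Granting~(iii), part~(ii) is almost immediate. Its upper bound is the upper bound of~(iii) at level $p+r$ plus unitarity: $\|S\varphi\|_{\calA_{p+r}}\le\|S\varphi\|_{\calF^1(\scrC\calS_{-(p+r)})}=\|\varphi\|_{2,p+r}$. Its lower bound requires $\|\varphi\|_{2,p}=\|S\varphi\|_{\calF^1(\scrC\calS_{-p})}\le C_r\|S\varphi\|_{\calA_{p+r}}$; I would write the $\calF^1(\scrC\calS_{-p})$ norm as a Gaussian $L^2$-integral of $S\varphi$ and dominate the integrand by its $\calA_{p+r}$ sup bound, the finiteness of $\int_{\calS'}e^{|x|_{-r}^2}\mu(dx)$ for $r>\frac12$ (a Fernique-type bound on the support $\calS_{-r}$) furnishing exactly the constant $C_r$. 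Finally~(i) follows once the two convolutions above are controlled on the $\calA$-scale: from $\tilde\varphi(z)=\int S\varphi(z+\I y)\mu(dy)$ one gets $|\tilde\varphi(z)|\le\|S\varphi\|_{\calA_q}\int e^{\frac12|z+\I y|_{-q}^2}\mu(dy)$ with the $y$-integral finite for $q>\frac12$, which yields the lower bound on $\|\tilde\varphi\|_{\calA_{p-s}}$ through the upper bound of~(iii); the inverse identity $S\varphi(z)=\int\tilde\varphi(z+y)\mu(dy)$ gives the upper bound of~(i), the cross terms from the real shift forcing the larger margin $r>1$. In each case the admissible shifts ($s>\frac12$, $r>1$) are dictated by convergence of these Gaussian integrals, the same Hilbert--Schmidt threshold recurring.
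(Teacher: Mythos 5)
The first thing to note is that the paper contains no proof of this theorem: it is stated with the citations \cite{Ld}, \cite{Lf}, \cite{Le} and used as imported background (as is the corollary following it), so there is no internal argument to compare yours against; the assessment below is of your proposal on its own terms.

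Your part (iii) is the substantive core, and it is correct. The upper bound by Cauchy--Schwarz on the Taylor expansion is standard; for the lower bound, the polydisk Cauchy estimate in Hermite coordinates is the right mechanism, since distinct $h_j$ are indeed orthogonal in every $\calS_{-p}$ with $|h_j|_{-p}=(2j+2)^{-p}$, the optimized constant $c_m=m!\,(e/m)^{m/2}$ is right, the multiset count $n!/\prod_l m_l!$ cancels the $1/n!$ in the $\calF^1$ norm, and $c_m^2/m!=m!(e/m)^m=O(\sqrt m)$ makes each Euler factor $1+O((2j+2)^{-4})$, so the product converges because $\sum_j(2j+2)^{-4}<\infty$. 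Two points need repair, one minor and one real. Minor: in (ii) you assert that Fernique furnishes ``exactly'' the constant $C_r$; what your Gaussian-integral representation of the $\calF^1(\scrC\calS_{-p})$ norm actually produces is the factor $\prod_n\bigl(1-(2n+2)^{-2r}\bigr)^{-1}$, and identifying this as $\le C_r^2$ requires the (true, but unproved by you) pointwise inequality $(1-x)^{-1}\le(1-2x)^{-2/3}$ on $(0,\tfrac12)$. Real: your derivation of the upper bound in (i) from (iii) \emph{as stated} breaks down for $1<r\le 2$. To bound $\|\varphi\|_{2,p}=\|S\varphi\|_{\calF^1(\scrC\calS_{-p})}$ by the shift-$2$ statement you need control of $\|S\varphi\|_{\calA_{p+2}}$, but the convolution $S\varphi(z)=\int\tilde{\varphi}(z+y)\,\mu(dy)$ controls only $\|S\varphi\|_{\calA_{q'}}$ for $q'$ strictly below $p+r$: the split $|z+y|^2_{-(p+r)}\le(1+\epsilon)|z|^2_{-(p+r)}+(1+\epsilon^{-1})|y|^2_{-(p+r)}$ forces $\epsilon$ to stay bounded away from $0$ (otherwise the Gaussian integral diverges), hence a genuine loss $(1+\epsilon)$ in the exponent, and testing on $z=h_0$ shows $q'\le p+r-\frac{\ln(1+\epsilon)}{2\ln 2}<p+r$, so $q'\ge p+2$ is unattainable when $r\le2$. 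The fix lies inside your own argument: your Euler-product estimate proves the (iii)-type inequality with \emph{any} shift $s'>\tfrac12$, and choosing $s'\in(\tfrac12,1)$, so that $s'<r$, the requirement $(1+\epsilon)\le 2^{2(r-s')}$ is compatible with the Fernique constraint on $\epsilon$. You should therefore state and prove the variable-shift version of (iii) before deducing (i); as written, the reduction leaves the range $1<r\le2$ open.
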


\begin{corollary}\cite{Ld}\
\begin{enumerate}[(iii)]
\item[$(i)$]
For $\varphi,\psi\in (\calS_p)$ with $p>1$, $\varphi(x)=\psi(x)$ $\mu$-a.e. $x$ in $\calS'$ if and only if $\tilde{\varphi}(z)=\tilde{\psi}(z)$ for all $z\in\scrC\calS_{-p}$.

\item[$(ii)$]
$\calA_\infty|_{\calS'}\equiv\{\varphi|_{\calS'};\,\varphi\in\calA_\infty\}\subset (\calS)$ and $\calA_\infty=(\calS\tilde{)}\equiv\{\tilde{\varphi};\,\varphi\in (\calS)\}$.

\item[$(iii)$]
The families of norms $\{\|\cdot\|_{\calA_p};\,p>0\}$ and $\{\|\cdot\|_{2,p};\,p>0\}$ are equivalent in $\calA_\infty$.
\end{enumerate}
\end{corollary}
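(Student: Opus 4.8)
The three assertions are all to be read off from Theorem \ref{thm-lee} together with two facts already recorded above: the map $\varphi\mapsto\tilde\varphi$ is linear, and $\varphi=\tilde\varphi$ holds $\mu$-a.e.\ on $\calS'$ for every $\varphi\in(\calS_p)$ with $p>\frac12$. The plan is to prove the parts in the order (i), (ii), (iii), since the bijectivity in (ii) rests on the injectivity contained in (i), while the norm equivalence in (iii) presupposes the identification $\calA_\infty=(\calS\tilde{)}$ from (ii). For part (i) I would set $\chi=\varphi-\psi\in(\calS_p)$, so that $\tilde\chi=\tilde\varphi-\tilde\psi$. If $\varphi=\psi$ $\mu$-a.e.\ then $\chi=0$ in $(\calS_p)\subset(L^2)$, i.e.\ $\|\chi\|_{2,p}=0$; fixing $s>\frac12$, the first inequality of Theorem \ref{thm-lee}(i) gives $\|\tilde\chi\|_{\calA_{p-s}}=0$, so $\tilde\chi$ vanishes on $\scrC\calS_{-(p-s)}$. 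Since $\scrC\calS_{-(p-s)}\supset\scrC\calS$ is dense in $\scrC\calS_{-p}$ and $\tilde\chi$ is analytic (hence continuous) on $\scrC\calS_{-p}$, I conclude $\tilde\chi\equiv0$ there. Conversely, if $\tilde\varphi=\tilde\psi$ on $\scrC\calS_{-p}$ then $\tilde\chi=0$ on $\calS_{-p}\supset{\rm supp}\,\mu$, and as $\chi=\tilde\chi$ $\mu$-a.e.\ this forces $\chi=0$ $\mu$-a.e.

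For part (ii), the inclusion $(\calS\tilde{)}\subseteq\calA_\infty$ is immediate: for $\varphi\in(\calS)=\bigcap_p(\calS_p)$ the first inequality of Theorem \ref{thm-lee}(i) yields $\|\tilde\varphi\|_{\calA_{p-s}}\le\alpha_p^{-1}\|\varphi\|_{2,p}<\infty$ for every $p$, so $\tilde\varphi\in\bigcap_p\calA_{p-s}=\calA_\infty$. For the reverse inclusion, given $f\in\calA_\infty$ I would set $\varphi:=f|_{\calS'}$; since ${\rm supp}\,\mu\subset\calS_{-p}$ for $p>\frac12$ and $|f(x)|\le\|f\|_{\calA_p}e^{\frac12|x|_{-p}^2}$, one gets $\int_{\calS'}|\varphi|^2\,d\mu\le\|f\|_{\calA_p}^2\int_{\calS'}e^{|x|_{-p}^2}\,\mu(dx)<\infty$ (the integral being finite for $p>\frac12$, as in the constant $C_p$ of Theorem \ref{thm-lee}(ii)), so $\varphi\in(L^2)$. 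The aim is then to show simultaneously that $\varphi\in(\calS)$ and $\tilde\varphi=f$. Granting momentarily that $\tilde\varphi=f$ on $\scrC\calS_{-p}$, the reverse estimate $\|\varphi\|_{2,p}\le\beta_{p,r}\|\tilde\varphi\|_{\calA_{p+r}}$ of Theorem \ref{thm-lee}(i) gives $\|\varphi\|_{2,p}\le\beta_{p,r}\|f\|_{\calA_{p+r}}<\infty$ for all $p$, whence $\varphi\in\bigcap_p(\calS_p)=(\calS)$; and the identity $\tilde\varphi=f$ itself follows because $\tilde\varphi=\varphi=f$ $\mu$-a.e.\ forces $\tilde\varphi=f$ everywhere on $\scrC\calS_{-p}$ by continuity and analytic continuation from the full-support real subspace $\calS_{-p}$. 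With injectivity of $\varphi\mapsto\tilde\varphi$ supplied by part (i), restriction and the tilde map become mutually inverse between $(\calS)$ and $\calA_\infty$, proving both $\calA_\infty|_{\calS'}\subseteq(\calS)$ and $\calA_\infty=(\calS\tilde{)}$.

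For part (iii), by (ii) every element of $\calA_\infty$ is $\tilde\varphi$ for a unique $\varphi\in(\calS)$, so Theorem \ref{thm-lee}(i) reads $\alpha_p\|\tilde\varphi\|_{\calA_{p-s}}\le\|\varphi\|_{2,p}\le\beta_{p,r}\|\tilde\varphi\|_{\calA_{p+r}}$. Using that both families of norms are nondecreasing in their index, the left inequality bounds any $\|\cdot\|_{\calA_a}$ by a norm $\|\cdot\|_{2,p}$ (choose $p>1$ with $p-s\ge a$) and the right inequality bounds any $\|\cdot\|_{2,q}$ by a norm $\|\cdot\|_{\calA_{p+r}}$ (choose $p\ge q$, $p>1$). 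These two bounds show that $\{\|\cdot\|_{\calA_p}\}_{p>0}$ and $\{\|\cdot\|_{2,p}\}_{p>0}$ generate the same locally convex topology, i.e.\ are equivalent on $\calA_\infty$.

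The delicate point—and the only place requiring care beyond transcribing the inequalities of Theorem \ref{thm-lee}—is the regularity bootstrap in (ii): promoting $\varphi=f|_{\calS'}$ from $(L^2)$ to $(\calS)$. The reverse estimate of Theorem \ref{thm-lee}(i) is stated for $\varphi$ \emph{already} lying in $(\calS_p)$, and the a.e.\ identity $\varphi=\tilde\varphi$ is likewise available only once $\varphi\in(\calS_p)$ with $p>\frac12$; so a naive application is circular. I expect to resolve this by working with the finite-chaos truncations $\varphi_N$, which lie in $(\calS)$, establishing a uniform bound $\|\tilde{\varphi_N}\|_{\calA_{p+r}}\lesssim\|f\|_{\calA_{p+r}}$ and passing to the limit to obtain both $\tilde\varphi=f$ and $\sup_N\|\varphi_N\|_{2,p}<\infty$. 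Everything else is a direct consequence of the norm inequalities already granted.
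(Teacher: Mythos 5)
The paper does not prove this corollary at all; it is quoted from Lee \cite{Ld} immediately after Theorem \ref{thm-lee}, so your attempt has to be measured against what Theorem \ref{thm-lee} and the surrounding facts actually supply. Judged this way, your part (i) is correct (linearity, the left inequality of Theorem \ref{thm-lee}(i), density of $\calS$ in $\calS_{-p}$, continuity of the analytic function $\tilde\chi$, ${\rm supp}\,\mu\subset\calS_{-p}$, and the a.e.\ identity $\varphi=\tilde\varphi$, legitimate since $p>1>\frac12$), and your part (iii) is a routine consequence of (ii) together with the monotonicity in $p$ of both norm families. One point you should make explicit in (i) and (ii): passing from ``$\tilde\chi=0$ $\mu$-a.e.\ on $\calS_{-p}$'' to ``$\tilde\chi=0$ everywhere on $\calS_{-p}$'' uses that the Gaussian measure $\mu$ has full topological support in $\calS_{-p}$.

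The genuine gap is in part (ii), exactly at the step you yourself flag, and your proposed repair does not work. For $\varphi=f|_{\calS'}$ you know only $\varphi\in(L^2)$, so its Wiener--It\^o kernels are Hilbert--Schmidt on $\scrC\calS_0$ and nothing more; hence the finite-chaos truncations $\varphi_N$ lie in $(L^2)$ but in general in \emph{no} $(\calS_p)$ with $p>0$ (that would require $\|D^nS\varphi(0)\|_{\calH\calS^n(\calS_{-p})}<\infty$, which is precisely the regularity to be proved), let alone in $(\calS)$. Moreover the uniform bound $\|\tilde{\varphi_N}\|_{\calA_{p+r}}\lesssim\|f\|_{\calA_{p+r}}$ is not a technical lemma to be ``established and passed to the limit'': chaos truncation has no controlled action on the growth classes $\calA_q$, and that bound carries the entire difficulty of the theorem. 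The tool that actually breaks the circularity is Theorem \ref{thm-lee}(iii) --- which you never invoke --- because, unlike parts (i) and (ii) of that theorem, it applies to an \emph{arbitrary} analytic function on $\scrC\calS_{-p}$ with no membership hypothesis. The argument then runs: a direct estimate (using $|z+x|_{-q}^2\le 2|z|_{-q}^2+2|x|_{-q}^2$, an index shift by $\frac12$, and $\int_{\calS'}e^{|x|_{-q}^2}\,\mu(dx)<\infty$ for $q>\frac12$) shows $\mu\ast f\in\calA_\infty$; Theorem \ref{thm-lee}(iii) then gives $\mu\ast f\in\calF^1(\scrC\calS_{-q})$ for every $q$; since $S$ is unitary from $(\calS_q)$ onto $\calF^1(\scrC\calS_{-q})$ and $S\varphi=\mu\ast\varphi=\mu\ast f$ on $\calS_0$, injectivity of the $S$-transform on $(L^2)$ forces $\varphi\in(\calS_q)$ for every $q$, i.e.\ $\varphi\in(\calS)$; finally $\tilde\varphi(z)=\int_{\calS'}S\varphi(z+\I\,y)\,\mu(dy)=\int_{\calS'}\int_{\calS'}f(z+x+\I\,y)\,\mu(dx)\mu(dy)=f(z)$ by the Gaussian cancellation $\int_{\calS'}\int_{\calS'}e^{(z+x+\I\,y,\,\eta)}\,\mu(dx)\mu(dy)=e^{(z,\,\eta)}$. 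Until your part (ii) is rebuilt along such lines, both (ii) and the dependent part (iii) remain unproved.
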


Recall that any function $f\in (L^2)$ is identified with the equivalent class of functions in which any function is equal to $f$ almost all with respect to $\mu$.
In this sense, we identify $(\calS)$ with $\calA_\infty$, and call $\calA_\infty$ the analytic version of $(\calS)$. It is noted that, for any $F\in (\calS)'$ and $\varphi\in(\calS)$, $\ll F,\,\varphi\gg=\ll F,\,\tilde{\varphi}\gg$.

\section{ Hida Derivatives}

For $F\in (\calS)'$ and $\eta\in\calS$, define $\partial_\eta F\in (\calS)'$ by
\begin{equation}\label{3.1}
\ll \partial_\eta F,\,\varphi\gg =\,\ll F,\,\tilde{\eta}\varphi\gg-\ll F,\,D_\eta\varphi\gg,~~~\varphi\in (\calS),
\end{equation}
where $\tilde{\eta}(x)=(x,\,\eta)$, $x\in\calS'$, and $D_\eta\varphi$ is the G\^ateaux derivative of $\varphi$ in the direction of $\eta$.

\begin{remark}
By using the chain rule and applying the integration by parts formula given in Theorem \ref{thm-int-by-parts},
\begin{align*}
\ll D_\eta\varphi,\,\psi\gg=&\int_{\calS'}D_\eta(\varphi\psi)(x)\,\mu(dx)-
\int_{\calS'}\varphi(x)\,D_\eta\psi(x)\,\mu(dx)\\
=&\int_{\calS'}(x,\,\eta)\,\varphi(x)\,\psi(x)\,\mu(dx)-
\int_{\calS'}\varphi(x)\,D_\eta\psi(x)\,\mu(dx)
\end{align*}
for any $\varphi,\,\psi\in (\calS)$ and $\eta\in\calS$.
This implies that $\partial_\eta \varphi=D_\eta \varphi$.
\end{remark}

Let $F\in (\calS_p)$, $p\in\r$, and $\eta\in\calS$.
Putting $\varphi=\,e^{(\cdot,\,h)-\frac{1}{2}\int_{-\infty}^\infty h(t)^2\,dt}$, $h\in\scrC\calS$, in (\ref{3.1}) and applying the Cauchy integral formula,
\begin{align}
S\partial_\eta F(h)=&\left.\frac{d}{dw}\right|_{w=0}SF(h+w\,\eta)\nonumber\\
=&\,\frac{1}{2\pi i}\int_{|w|=\frac{1}{2}|\eta|_{-p}^{-1}}\frac{SF(h+w\,\eta)}{w^2}\,dw,\label{3.2}
\end{align}
from which it follows that
\begin{equation}\label{3.3}
\bigl|S\partial_\eta F(h)\bigr|\leq\,{\rm Const.}\,\|F\|_{2,p}\cdot|\eta|_{-p}\cdot e^{\frac{1}{2}|h|_{-p+1}^2}.
\end{equation}

By virtue of (\ref{3.2}), (\ref{3.3}) and Theorem \ref{thm-lee}, we can conclude the following facts:
For $F\in (\calS_p)$ with $p\in\r$, $\partial_\eta F$ in (\ref{3.2}) can be extended to $\eta\in\scrC\calS_{-p}$ by defining
$$
\partial_\eta F=S^{-1}\biggl(\left.\frac{d}{dw}\right|_{w=0}SF(\cdot+w\eta)\biggr).
$$
Moreover, by applying the characterization theorem in \cite{LS} (see also \cite{PS}),
\begin{equation}\label{3.4}
\|\partial_\eta F\|_{2, p-3}\leq\,{\rm Const.}\,\| F\|_{2,p}\cdot |\eta|_{-p},
\end{equation}
where such a constant is independent of the choice of $p, F,\eta$.
It is noted that $\partial_\eta\,\varphi=\partial_\eta\,\tilde{\varphi}$ for $\varphi\in (\calS_p)$ with $p>\frac{1}{2}$.

By (\ref{3.4}), the mapping $(\eta,\,\varphi)\to\,\ll\partial_\eta F,\,\varphi\gg$, $F\in (\calS)'$, is bilinear and continuous from $(\calS)\times\scrC\calS$ into $\c$.
By applying the kernel theorem (see \cite{BK}), there exists a unique element $K_{F}\in\scrC\calS'\otimes (\calS)'$ such that
\begin{equation*}
 \ll\partial_\eta F,\,\varphi\gg=~(\!(K_{F},\,\eta\otimes\varphi)\!),
\end{equation*}
where $(\!(\cdot,\cdot)\!)$ is the $\scrC\calS'\otimes(\calS)'$--$\scrC\calS\otimes (\calS)$ pairing.
Symbolically, we express such an identity by the formal integral as follows:
\begin{equation}\label{3.5}
\ll\partial_\eta\, F,\,\varphi\gg=~\int_{-\infty}^\infty\int_{\calS'}\,K_F(t;\,x)\,\eta(t)\,\varphi(x)\,\mu(dx)dt.
\end{equation}

\vskip 0.1cm

\noindent
{\it$-$~$\dot{B}(t)$-differentiation}

\vskip 0.2cm

Assume that $\frac{\ln 6}{2\ln 2}<q<p$.
Let $\varphi\in (\calS_p)$ and $\eta\in\scrC\calS_{-p}$. Observe that
\begin{align}
&\left.\frac{d}{dw}\right|_{w=0}S\varphi(z+w\eta)\nonumber\\
&~~~=\left.\frac{d}{dw}\right|_{w=0}\sum_{n=0}^\infty\,
\frac{1}{n!}\,D^nS\varphi(0)(z+w\eta)^n\nonumber\\
&~~~=\sum_{n=1}^\infty\,\frac{1}{(n-1)!}\,D^nS\varphi(0)z^{n-1}\eta\nonumber\\
&~~~=\sum_{n=1}^\infty\,\frac{1}{(n-1)!}\,\int_{\calS_{-q}}
\int_{\calS_{-q}}D^nS\varphi(0)(x+z+\I\,y)^{n-1}\eta\,\,\mu(dy)\mu(dx),\label{3.6}
\end{align}
 and
\begin{align}
&\sum_{n=1}^\infty\,\frac{1}{(n-1)!}\,\left|\int_{\calS_{-q}}D^n
S\varphi(0)(x+z+\I\,y)^{n-1}\eta\,\,\mu(dy)\right| \nonumber\\
&~~~\leq\left\{\sum_{n=1}^\infty\,\frac{1}{(n-1)!}\,\| D^nS\varphi(0)\|_{\calH\calS^n(\scrC\calS_{-q})}^2\right\}^{\!\frac{1}{2}}\!\!\cdot\! \int_{\calS'}e^{3|y|_{-q}^2}\,\mu(dy)\nonumber\\
&~~~~~~~~~\times |\eta|_{-q}\,e^{\frac{3}{2}|z|_{-q}^2}e^{\frac{3}{2}|x|_{-q}^2}\nonumber\\
&~~~\leq\,\omega_{p-q}\cdot|\eta|_{-q}\,e^{\frac{3}{2}|z|_{-q}^2}e^{\frac{3}{2}|x|_{-q}^2}\|
\varphi\|_{2,p}\int_{\calS'}e^{3|y|_{-q}^2}\,\mu(dy)~\in L^1(\calS',\,\mu)\label{3.7}	
\end{align}
with respect to $x$ for any $z\in\scrC\calS_{-q}$.
Here, we remark that the assumption ``$q>\frac{\ln 6}{2\ln 2}$" guarantees the integral $\int_{\calS'}e^{3|y|_{-q}^2}\,\mu(dy)$ is finite.
Therefore,
\begin{align*}
{\rm (3.6)}=&\int_{\calS_{-q}}\left\{\sum_{n=1}^\infty\,\frac{1}{(n-1)!}\,
\int_{\calS_{-q}}D^nS\varphi(0)(x+z+\I\,y)^{n-1}\eta\,\,\mu(dy)\right\}\mu(dx)\\
=&\,S\left(\sum_{n=1}^\infty\,\frac{1}{(n-1)!}\,:D^nS\varphi(0)x^{n-1}\eta\,:\right)
(z),~~~z\in\scrC\calS_{-q},
\end{align*}
where
\begin{align}
&\left\|\sum_{n=1}^\infty\,\frac{1}{(n-1)!}\,:D^nS\varphi(0)x^{n-1}\eta\,:\right\|_{2,q}^2\nonumber\\
&~~~=\sum_{n=1}^\infty\,\frac{1}{(n-1)!}\,\|D^nS\varphi(0)(\cdot,\ldots,\cdot,\eta)
\|_{\calH\calS(\scrC\calS_{-q})}^2\nonumber\\
&~~~\leq~\omega_{p-q}^2\cdot2^{2(p-q)}\cdot\|\varphi\|_{2,p}^2\cdot |\eta|_{-p}^2.\label{3.8}
\end{align}
One notes that the inequality (\ref{3.8}) is still valid for any $p\in\r$, $q<p$ and $\eta\in\scrC\calS_{-p}$.
Since $(\calS)$ and $\scrC\calS$ are dense respectively in $(\calS_p)$ and $\scrC\calS_{-p}$ for any $p\in\r$, we can combine (\ref{3.8}) with (\ref{3.4}) and to extend the above argument to $\varphi\in (\calS)'$.
In fact, we obtain the following

\begin{proposition} \label{prop-derivative1}
Let $\varphi\in (\calS_p)$ with $p\in\r$ and $\eta\in\scrC\calS_{-p}$.
\begin{enumerate}
\item[$(i)$]
For any $q<p$, $\partial_\eta\,\varphi\in (\calS_q)$.
In fact,
$$
\partial_\eta\,\varphi=\sum_{n=1}^\infty\,\frac{1}{(n-1)!}\,:D^nS\varphi(0)x^{n-1}\eta\,:,
$$
where
$$
\|\partial_\eta\,\varphi\|_{2,q}\leq~\omega_{p-q}\cdot 2^{p-q}\cdot\|\varphi\|_{2,p}\cdot |\eta|_{-p}.
$$

\item[$(ii)$]
For $\frac{\ln 6}{2\ln 2}<q<p$, the sum in $(i)$ converges absolutely and uniformly on each bounded set in $\scrC\calS_{-q}$, and $\partial_\eta\,\varphi=\,D_\eta\,\tilde{\varphi}$.
\end{enumerate}
\end{proposition}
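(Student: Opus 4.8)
The plan is to read off both assertions from the $S$-transform, exploiting that $S$ is a unitary map from $(\calS_q)$ onto $\calF^1(\scrC\calS_{-q})$ and that $\partial_\eta$ was defined precisely by $\partial_\eta\varphi=S^{-1}\bigl(\frac{d}{dw}\big|_{w=0}S\varphi(\cdot+w\eta)\bigr)$. First I would differentiate the chaos expansion of $S\varphi$ term by term: since $S\varphi$ is entire on $\scrC\calS_{-p}$ (Bargmann-Segal analytic), the computation (\ref{3.6}) gives $\frac{d}{dw}\big|_{w=0}S\varphi(z+w\eta)=\sum_{n\geq1}\frac{1}{(n-1)!}D^nS\varphi(0)z^{n-1}\eta$. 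Because $S(:Tx^m\!:)(z)=Tz^m$ for every symmetric Hilbert-Schmidt $m$-linear $T$, this series is formally the $S$-transform of $\sum_{n\geq1}\frac{1}{(n-1)!}:D^nS\varphi(0)x^{n-1}\eta\,:$, so the whole of part $(i)$ reduces to showing that this series converges in $(\calS_q)$ with the claimed norm bound; once that is in hand, applying $S^{-1}$ term by term and using injectivity of $S$ identifies the limit with $\partial_\eta\varphi$.

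The heart of the argument is the norm estimate (\ref{3.8}), which I would obtain as follows. The summands $:D^nS\varphi(0)x^{n-1}\eta\,:$ live in mutually orthogonal chaoses, and $\|:Rx^{n-1}\!:\|_{2,q}^2=(n-1)!\,\|R\|_{\calH\calS^{n-1}(\scrC\calS_{-q})}^2$ for $R=D^nS\varphi(0)(\cdot,\ldots,\cdot,\eta)$, so $\|\partial_\eta\varphi\|_{2,q}^2=\sum_{n\geq1}\frac{1}{(n-1)!}\|D^nS\varphi(0)(\cdot,\ldots,\cdot,\eta)\|_{\calH\calS^{n-1}(\scrC\calS_{-q})}^2$. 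Expanding against the CONS $\{(2m+2)^qh_m\}$ of $\scrC\calS_{-q}$ and applying Cauchy-Schwarz in the contracted slot pulls out a factor $|\eta|_{-p}^2$ and reweights that slot by $(2j+2)^{2p}$; the remaining $n-1$ weights $(2m_k+2)^{2q}$ are dominated by $(2m_k+2)^{2p}$ at the cost of $\prod_k(2m_k+2)^{-2(p-q)}\leq 2^{-2(n-1)(p-q)}$, which turns $\|D^nS\varphi(0)(\cdot,\ldots,\cdot,\eta)\|_{\calH\calS^{n-1}(\scrC\calS_{-q})}^2$ into $2^{-2(n-1)(p-q)}|\eta|_{-p}^2\|D^nS\varphi(0)\|_{\calH\calS^n(\scrC\calS_{-p})}^2$. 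Writing $\frac{1}{(n-1)!}=\frac{n}{n!}$ and $n\,2^{-2(n-1)(p-q)}=2^{2(p-q)}\bigl(n/2^{2n(p-q)}\bigr)\leq 2^{2(p-q)}\omega_{p-q}^2$ by the very definition of $\omega_{p-q}$, the sum collapses to $2^{2(p-q)}\omega_{p-q}^2|\eta|_{-p}^2\|\varphi\|_{2,p}^2$, which is the stated bound. Since this is purely a Hilbert-space inequality it holds for every $p\in\r$ and $q<p$; together with the continuity bound (\ref{3.4}) and the density of $(\calS)$ in $(\calS_p)$ and of $\scrC\calS$ in $\scrC\calS_{-p}$, it extends the formula from the analytic case to all $\varphi\in(\calS_p)$.

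For part $(ii)$ I would invoke the integrability estimate (\ref{3.7}): the hypothesis $q>\frac{\ln6}{2\ln2}$ is exactly what makes $\int_{\calS'}e^{3|y|_{-q}^2}\,\mu(dy)$ finite, and this majorizes the series defining the analytic version $\tilde\varphi$ uniformly on bounded subsets of $\scrC\calS_{-q}$. Differentiating $\tilde\varphi$ in the direction $\eta$ under the integral and interchanging sum and integral -- both licensed by (\ref{3.7}) -- yields $D_\eta\tilde\varphi=\sum_{n\geq1}\frac{1}{(n-1)!}:D^nS\varphi(0)x^{n-1}\eta\,:$, giving both the absolute and uniform convergence on bounded sets and the identity $\partial_\eta\varphi=D_\eta\tilde\varphi$. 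The main obstacle is not any single estimate but the repeated interchange of the $w$-derivative, the chaos sum, and the operators $S^{-1}$ and $\int\cdots\mu(dy)$; the bound (\ref{3.8}) is the linchpin that secures convergence in the $(\calS_q)$-topology and thereby legitimizes every term-by-term manipulation, while the threshold $q>\frac{\ln6}{2\ln2}$ supplies precisely the integrability margin needed for the pointwise-analytic statements in $(ii)$.
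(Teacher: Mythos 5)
Your proposal is correct, and its skeleton coincides with the paper's argument: both start from the termwise differentiation (\ref{3.6}) of the chaos expansion of $S\varphi$, both hinge on the estimate (\ref{3.8}), and for part $(ii)$ both rely on the integrability bound (\ref{3.7}) made possible by $q>\frac{\ln 6}{2\ln 2}$. There are, however, two genuine differences worth noting. First, the paper merely asserts (\ref{3.8}); your CONS expansion of $\scrC\calS_{-q}$ with Cauchy--Schwarz in the contracted slot, the reweighting $(2m_k+2)^{2q}\leq 2^{-2(p-q)}(2m_k+2)^{2p}$, and the step $n\,2^{-2(n-1)(p-q)}\leq 2^{2(p-q)}\omega_{p-q}^2$ is a complete and correct derivation of it, so you supply a detail the paper omits. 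Second, your completion of part $(i)$ is organized differently: the paper first proves the identification of $\partial_\eta\varphi$ with the Wick series only for $\frac{\ln 6}{2\ln 2}<q<p$, because it routes the identification through the Gaussian-integral representation of $:Tx^{n-1}\eta:$ and a Fubini interchange justified by (\ref{3.7}), and it then reaches general $p\in\r$, $q<p$ by a density argument combining (\ref{3.8}) with (\ref{3.4}); you instead invoke the unitarity of $S$ from $(\calS_q)$ onto $\calF^1(\scrC\calS_{-q})$ to identify the $(\calS_q)$-convergent Wick series with $\partial_\eta\varphi$ directly, which makes part $(i)$ work for all $q<p$ in one stroke and renders your closing density remark redundant rather than essential. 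What the paper's Gaussian-integral route buys is that it simultaneously produces the pointwise analytic-version statements needed for part $(ii)$; what your route buys is a cleaner, purely Hilbert-space proof of part $(i)$ that does not pass through the restricted range $q>\frac{\ln 6}{2\ln 2}$ at all.
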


We commonly denote $\partial_{\delta_t}$ by $\partial_t$  for $t\in\r$, where $\delta_t$ is the Dirac measure concentrated on $t$.
If $\partial_t\,\varphi$ exists, that is, there is a $F\in (\calS)'$ such that its $U$-functional satisfies $SF=\left.\frac{d}{dw}\right|_{w=0}S\varphi(\cdot+w\,\delta_t)$, $\varphi$ is said to be $\dot{B}(t)$-differentiable and $F$ is denoted by $\partial_t\,\varphi$.
The operator $\partial_t$, which is sometimes written as ${\displaystyle \frac{\partial}{\partial\dot{B}(t)}}$, is often called the Hida derivative.

By a similar argument to (\ref{3.7}), we get the following estimation:
Assume that $\frac{\ln 6}{2\ln 2}<q<p$. Let $\varphi\in (\calS_p)$ and $\eta\in\scrC\calS_0$.
For $z\in\scrC\calS_{-q}$,
\begin{align*}
&\sum_{n=1}^\infty\,\frac{1}{(n-1)!}\,\int_{-\infty}^\infty\int_{\calS_{-p}}|\eta(t)||
(\delta_t,\,h_n)| |D^nS\varphi(0)(z+\I\,y)^{n-1}h_n|\,\mu(dy)\,dt\\
&~~\leq\,\omega_{p-q}\cdot\|\varphi\|_{2,p}\cdot|\eta|_0\cdot e^{|z|_{-q}^2}\left\{\sum_{n=0}^\infty\,(2n+2)^{-2q}\right\}^{\frac{1}{2}}
\int_{\calS'}e^{|y|_{-q}^2}\,\mu(dy).
\end{align*}
Then, for such $\varphi$, $\eta$ and $z$,
\begin{align*}
&\partial_\eta\varphi(z)\\
=&\sum_{n=1}^\infty\,\frac{1}{(n-1)!}\,\int_{\calS_{-q}}D^nS\varphi(0)(z+\I\,y)^{n-1}\eta\,\,\mu(dy)\\
=&\sum_{k=0}^\infty\,(\eta,\,h_k)\sum_{n=1}^\infty\,\frac{1}{(n-1)!}\,\int_{\calS_{-q}}D^n
S\varphi(0)(z+\I\,y)^{n-1}h_k\,\,\mu(dy)\\
=&\int_{-\infty}^\infty\eta(t)\,\sum_{n=1}^\infty\,\frac{1}{(n-1)!}\,
\int_{\calS_{-q}}\sum_{k=0}^\infty\,(\delta_t,\,h_k)\,D^nS\varphi(0)(z+\I\,y)^{n-1}h_k\,\,\mu(dy)\,dt\\
=&\int_{-\infty}^\infty\eta(t)\,\sum_{n=1}^\infty\,\frac{1}{(n-1)!}\,
\int_{\calS_{-q}}D^nS\varphi(0)(z+\I\,y)^{n-1}\delta_t\,\,\mu(dy)\,dt\\
=&\int_{-\infty}^\infty\eta(t)\,\partial_t\varphi(z)\,\,dt,
\end{align*}
where we have by Proposition \ref{prop-derivative1} that
\begin{equation}\label{3.9}
\int_{-\infty}^\infty|\eta(t)|\,\|\partial_t\varphi\|_{2,q}\,dt\leq\,\omega_{p-q}\cdot 2^{p-q}\cdot\|\varphi\|_{2,p}\cdot|\eta|_0\cdot\left\{
\sum_{n=0}^\infty(2n+2)^{-2p}\right\}^{\frac{1}{2}},
\end{equation}
which is finite provided that $p>\frac{1}{2}$.
Thus, by (\ref{3.9}) and Proposition \ref{prop-derivative1}, we can also extend the above result to $\varphi\in (\calS_p)$ with $p>\frac{1}{2}$ as follows.

\begin{proposition} \label{prop-derivative2}
\begin{enumerate}
\item[$(i)$]
For $\varphi\in (\calS_p)$ with $p>\frac{1}{2}$ and $\eta\in\scrC\calS_0$,
\begin{equation}\label{3.10}
\partial_\eta\,\varphi=\int_{-\infty}^\infty\eta(t)\,\partial_t\varphi\,\,dt ~~~\mbox{in ~$(\calS_q)$},
\end{equation}
for any $q<p$, where the right-hand integral exists in the sense of Bochner as an $(\calS_q)$-valued integral satisfying the inequality (\ref{3.9}).

\item[$(ii)$]
For $\frac{\ln 6}{2\ln 2}<q<p$, the formula (\ref{3.10}) is valid pointwise in $\scrC\calS_{-q}$:
$$
\partial_\eta\,\varphi(z)=D_\eta\,\varphi(z)=\int_{-\infty}^\infty\eta(t)\,
D_{\delta_t}\varphi(z)\,dt ~~~\mbox{$\forall$ $z\in\scrC\calS_{-q}$}.
$$
\end{enumerate}
\end{proposition}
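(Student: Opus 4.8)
The plan is to establish (ii) first, in the restricted range $\frac{\ln 6}{2\ln 2}<q<p$, and then to deduce (i) for every $q<p$ by a density-and-continuity argument. For (ii), the pointwise formula is essentially the chain of equalities displayed just before the statement: writing $\varphi\in(\calS_p)$ through its $S$-transform, I would start from $\partial_\eta\varphi(z)=\sum_{n\ge1}\frac{1}{(n-1)!}\int_{\calS_{-q}}D^nS\varphi(0)(z+\I y)^{n-1}\eta\,\mu(dy)$, expand $\eta=\sum_k(\eta,h_k)h_k$, use $(\eta,h_k)=\int_{-\infty}^\infty\eta(t)(\delta_t,h_k)\,dt$, and reassemble the $k$-sum into $\delta_t$ under a $t$-integral to obtain $\int_{-\infty}^\infty\eta(t)\,\partial_t\varphi(z)\,dt$. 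The sole analytic point is the interchange of the $t$-integral with the $n$-sum and the Gaussian $y$-integral, which I would justify by Fubini using the domination bound displayed before the statement (the ``similar argument to (\ref{3.7})''). Here the hypothesis $q>\frac{\ln 6}{2\ln 2}$ is sharp: it is exactly the threshold making $\int_{\calS'}e^{3|y|_{-q}^2}\,\mu(dy)$ finite, since by independence of the Hermite coordinates this integral equals $\prod_{n\ge0}(1-6(2n+2)^{-2q})^{-1/2}$, which converges iff $6<2^{2q}$. Combined with Proposition \ref{prop-derivative1}(ii), which identifies $\partial_\eta\varphi$ with $D_\eta\tilde\varphi$ pointwise, this also gives the second equality in (ii).

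For (i) I would separate Bochner integrability from identification. Integrability follows from Proposition \ref{prop-derivative1}(i): $\|\partial_{\delta_t}\varphi\|_{2,q}\le\omega_{p-q}2^{p-q}\|\varphi\|_{2,p}|\delta_t|_{-p}$, and since $|\delta_t|_{-p}^2=\sum_{n\ge0}(2n+2)^{-2p}h_n(t)^2$, the Cauchy--Schwarz inequality together with orthonormality of the $h_n$ gives $\int_{-\infty}^\infty|\eta(t)||\delta_t|_{-p}\,dt\le|\eta|_0\{\sum_{n\ge0}(2n+2)^{-2p}\}^{1/2}$, finite precisely when $p>\frac12$; this is (\ref{3.9}) and it shows $t\mapsto\partial_t\varphi$ is Bochner integrable as an $(\calS_q)$-valued map. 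For the identification on the range $\frac{\ln 6}{2\ln 2}<q<p$, I would apply the $S$-transform, a continuous unitary onto $\calF^1(\scrC\calS_{-q})$, which therefore commutes with the Bochner integral: $S\bigl(\int_{-\infty}^\infty\eta(t)\partial_t\varphi\,dt\bigr)(z)=\int_{-\infty}^\infty\eta(t)\,\partial_t\varphi(z)\,dt$, which by (ii) equals $\partial_\eta\varphi(z)=S(\partial_\eta\varphi)(z)$. Injectivity of $S$ then yields $\partial_\eta\varphi=\int_{-\infty}^\infty\eta(t)\partial_t\varphi\,dt$ in $(\calS_q)$ on this range.

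To remove the restriction and obtain (i) for every $q<p$ with $p>\frac12$, I would argue by density. Both maps $\varphi\mapsto\partial_\eta\varphi$ and $\varphi\mapsto\int_{-\infty}^\infty\eta(t)\partial_t\varphi\,dt$ are bounded linear operators $(\calS_p)\to(\calS_q)$, the first by Proposition \ref{prop-derivative1}(i) and the second by (\ref{3.9}). They coincide on the dense subspace $(\calS)=\bigcap_{p'>0}(\calS_{p'})$: given $\varphi\in(\calS)$ I may pick $q'$ with $\max\{q,\frac{\ln 6}{2\ln 2}\}<q'<p'$ for a large $p'$, apply the already-proved identity in $(\calS_{q'})$, and then transport it to $(\calS_q)$ through the continuous embedding $(\calS_{q'})\hookrightarrow(\calS_q)$, which commutes with Bochner integration. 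Since the two continuous operators agree on a dense set, they agree on all of $(\calS_p)$, proving (i) in full generality.

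The main obstacle is the Fubini interchange underlying (ii): one must dominate, uniformly in $t$ and integrably against $\eta$, the triple ``sum over $n$, Gaussian integral in $y$, integral in $t$'', and it is precisely this domination that forces $q>\frac{\ln 6}{2\ln 2}$ via the finiteness of $\int_{\calS'}e^{3|y|_{-q}^2}\mu(dy)$. Everything else is comparatively soft: the extension to general $q$ in (i) is a routine density argument, provided one verifies the mild compatibility of the $(\calS_{q'})$- and $(\calS_q)$-valued Bochner integrals under the nuclear embeddings.
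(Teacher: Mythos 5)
Your proposal is correct and takes essentially the same route as the paper: the domination/Fubini computation giving the pointwise formula on the range $\frac{\ln 6}{2\ln 2}<q<p$, the Cauchy--Schwarz/orthonormality derivation of (\ref{3.9}) giving Bochner integrability, and a density-plus-continuity extension to all $q<p$ and all $\varphi\in(\calS_p)$, $p>\frac{1}{2}$, which is exactly what the paper compresses into ``by (\ref{3.9}) and Proposition \ref{prop-derivative1} we can extend.'' One notational caution: in your $S$-transform identification step, $\partial_t\varphi(z)$ should be read as $S(\partial_t\varphi)(z)$, not as the analytic-version value $D_{\delta_t}\tilde{\varphi}(z)$ appearing in part (ii); the same domination estimates justify the identity directly at the $S$-transform level, so the argument closes as you intend.
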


Comparing (\ref{3.5}) with Proposition \ref{prop-derivative2} yields that $K_\varphi(t)=\partial_t\,\varphi$ for $\varphi\in (\calS_p)$ with $p>\frac{1}{2}$, where $K_\varphi$ is the kernel function given in (\ref{3.5}).
A question naturally arises:
\par
\centerline{``If $\varphi\in (L^2)$ and $\eta\in\scrC\calS_0$, how about (\ref{3.10})?"}

\vskip 0.1cm

To see it, for any $n\in\n$, let $\phi_n\in\scrC L^2(\r^n,\,dt^{\otimes n})$ be a symmetric function such that
\begin{align}
&\int\cdots\int_{\r^n}\phi_n(t_1,\ldots,t_n)\,\eta_1\hat{\otimes}\cdots\hat{\otimes}\,
\eta_n(t_1,\ldots,t_n)\,dt_1\cdots dt_n\nonumber\\
&~~~=\frac{1}{n!}\,D^nS\varphi(0)(\eta_1,\ldots,\eta_n),
~~~\eta_1,\ldots,\eta_n\in\scrC\calS_0,\label{3.11}
\end{align}
where $\hat{\otimes}$ means the symmetric tensor product.
Then, for any $n\in\n$,
$$
I_n(\phi_n)=\frac{1}{n!}\,:D^nS\varphi(0)x^n:,
$$
where $I_n(\phi_n)$ is the multiple Wiener integral of order $n$ with the kernel function $\phi_n$.
By the Fubini theorem,
$$
\sum_{n=2}^\infty\,n!\,\int\cdots\int_{\r^{n-1}}|\phi_n(t,t_2,\ldots,t_n)|^2\,dt_2\cdots dt_n<+\infty
$$
for $[dt]$-almost all $t\in\r$.
Observe that for $q<0$,
\begin{align}
&\sum_{n=1}^\infty\,n^2(n-1)!\,\int\cdots\int_{\r^{n-1}}|({\bf A}^{q})^{\otimes (n-1)}\phi_n(t,t_2,\ldots,t_n)|^2\,dt_2\cdots dt_n\nonumber\\
&~~~\leq\,(1+\omega_{-q}^2)\,\sum_{n=1}^\infty\,n!\,\int\cdots\int_{\r^{n-1}}|
\phi_n(t,t_2,\ldots,t_n)|^2\,dt_2\cdots dt_n.\label{3.12}
\end{align}
If $\varphi\in (\calS)$, it follows from (\ref{3.11}) that $\phi_n\in\scrC\calS^{\hat{\otimes} n}$ and
$$
\phi_n(t_1,\ldots,t_n)=\frac{1}{n!}\,D^nS\varphi(0)(\delta_{t_1},\ldots,\delta_{t_n})
$$
for any $t_1,\ldots,t_n\in\r$; moreover, for any $h\in\scrC\calS$,
\begin{align}
S(\partial_t\varphi)(h)=&\sum_{n=1}^\infty\,\frac{1}{(n-1)!}\,D^nS\varphi(0)h^{n-1}\delta_t\nonumber\\
=&\sum_{n=1}^\infty\,n\,\int\cdots\int_{\r^{n-1}}\phi_n(t,t_2,\ldots,t_n)\,h(t_2)\cdots h(t_n)\,dt_2\cdots dt_n\nonumber\\
=&\,S\left(\sum_{n=1}^\infty\,n\,I_{n-1}(\phi_n(t,\ldots))\right)(h),\label{3.13}
\end{align}
which implies that $\partial_t\varphi=\sum_{n=1}^\infty\,n\,I_{n-1}(\phi_n(t,\ldots))$, where it follows from (\ref{3.12}) that for any $q<0$,
\begin{equation}\label{3.14}
\left\|\sum_{n=1}^\infty\,n\,I_{n-1}(\phi_n(t,\ldots))\right\|_{2,q}^2\leq\,
(1+\omega_{-q}^2)\sum_{n=1}^\infty\,n\,\left\|I_{n-1}(\phi_n(t,\ldots))\right\|_{2,0}^2.
\end{equation}
Combine (\ref{3.13}) with Proposition \ref{prop-derivative2} and then extend $\varphi$ to $(L^2)$ by using (\ref{3.4}) and (\ref{3.14}).
Then we have
\begin{proposition} \label{prop-derivative3}
For $\varphi\in (L^2)$ and $\eta\in\scrC\calS_0$,
$$
\partial_\eta\varphi=\int_{-\infty}^\infty\eta(t)\,\left\{\,\sum_{n=1}^\infty\,n\,I_{n-1}
(\phi_n(t,\ldots))\right\}\,dt~~~\mbox{in $(\calS_q)$}
$$
for any $q<0$, where the right-hand integral exists in the sense of Bochner as an $(\calS_q)$-valued integral satisfying the inequality
$$
\int_{-\infty}^\infty|\eta(t)|\,\left\|\,\sum_{n=1}^\infty\,n\,I_{n-1}(\phi_n(t,\ldots))
\right\|_{2,q}\,dt\leq\sqrt{1+\omega_{-q}^2}\cdot|\eta|_0\cdot\|\varphi\|_{2,0}.
$$
\end{proposition}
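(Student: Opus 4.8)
The plan is to bootstrap from the case of a test functional, where formula (\ref{3.13}) already supplies the pointwise identity $\partial_t\varphi=\sum_{n=1}^\infty n\,I_{n-1}(\phi_n(t,\ldots))$, and then to pass to a general $\varphi\in (L^2)$ by a density argument in which the two continuity estimates (\ref{3.4}) and (\ref{3.14}) control the two sides of the asserted identity in complementary spaces. Throughout, fix $q<0$ and abbreviate $G(t)=\sum_{n=1}^\infty n\,I_{n-1}(\phi_n(t,\ldots))$, regarded as an $(\calS_q)$-valued function of $t$. First I would check that $G$ is well defined and strongly measurable: by the Fubini argument already recorded before the statement, $\sum_{n}n!\int_{\r^{n-1}}|\phi_n(t,t_2,\ldots,t_n)|^2\,dt_2\cdots dt_n<\infty$ for almost every $t$, so the series defining $G(t)$ converges in $(L^2)$, hence in $(\calS_q)$, for almost every $t$; each partial sum is measurable in $t$ since $t\mapsto\phi_n(t,\cdot)\in\scrC L^2(\r^{n-1})$ is measurable and $I_{n-1}$ is a constant multiple of an isometry into $(L^2)$, and since $(\calS_q)$ is a separable Hilbert space, Pettis's theorem upgrades weak to strong measurability of the limit $G$.

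Next I would establish the norm bound, which simultaneously delivers Bochner integrability. By Cauchy--Schwarz,
$$\int_{-\infty}^\infty|\eta(t)|\,\|G(t)\|_{2,q}\,dt\leq|\eta|_0\left(\int_{-\infty}^\infty\|G(t)\|_{2,q}^2\,dt\right)^{1/2}.$$
Applying (\ref{3.14}) pointwise in $t$, then using the Wiener--It\^o isometry $\|I_{n-1}(\phi_n(t,\ldots))\|_{2,0}^2=(n-1)!\int_{\r^{n-1}}|\phi_n(t,t_2,\ldots,t_n)|^2\,dt_2\cdots dt_n$ together with $\|\varphi\|_{2,0}^2=\sum_n n!\int_{\r^n}|\phi_n|^2$, the inner integral is dominated by $(1+\omega_{-q}^2)\|\varphi\|_{2,0}^2$. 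This yields exactly the displayed inequality and shows that $\eta(\cdot)G(\cdot)$ is Bochner integrable in $(\calS_q)$; write $\Phi_\varphi=\int_{-\infty}^\infty\eta(t)\,G(t)\,dt$ for its integral, which is the right-hand side of the claimed formula.

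Finally I would identify $\Phi_\varphi$ with $\partial_\eta\varphi$ by density. For $\varphi\in(\calS)$, (\ref{3.13}) gives $\partial_t\varphi=G(t)$, and Proposition \ref{prop-derivative2} then gives $\partial_\eta\varphi=\Phi_\varphi$ in $(\calS_q)$. Given $\varphi\in (L^2)$, choose $\varphi^{(m)}\in(\calS)$ with $\|\varphi^{(m)}-\varphi\|_{2,0}\to 0$. By (\ref{3.4}) with $p=0$, $\partial_\eta\varphi^{(m)}\to\partial_\eta\varphi$ in $(\calS_{-3})$; by the norm bound applied to $\varphi^{(m)}-\varphi$, together with the linearity of $\varphi\mapsto\Phi_\varphi$, $\Phi_{\varphi^{(m)}}\to\Phi_\varphi$ in $(\calS_q)$, hence in $(\calS_{q'})$ with $q'=\min(q,-3)$. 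Since $\partial_\eta\varphi^{(m)}=\Phi_{\varphi^{(m)}}$ for every $m$, the two limits coincide in $(\calS_{q'})$; as $\Phi_\varphi\in(\calS_q)$ and the Gel'fand-triple inclusion $(\calS_q)\hookrightarrow(\calS_{q'})$ is injective, we conclude $\partial_\eta\varphi=\Phi_\varphi$ in $(\calS_q)$, which is the assertion.

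The main obstacle is precisely this last reconciliation: the only a priori regularity of $\partial_\eta$ on $(L^2)$ furnished by (\ref{3.4}) places $\partial_\eta\varphi$ in $(\calS_{-3})$, whereas the Bochner integral $\Phi_\varphi$ naturally lives in $(\calS_q)$ for every $q<0$, so the limiting identity can only be read off directly in the coarser space $(\calS_{q'})$, after which injectivity of the triple inclusions must be invoked to transport it back to $(\calS_q)$. By contrast, the strong measurability check and the passage from (\ref{3.14}) and the isometry to the displayed bound are routine once the pointwise estimate is in hand.
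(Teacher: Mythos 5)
Your proposal is correct and takes essentially the same route as the paper, whose entire proof is the compressed instruction ``Combine (\ref{3.13}) with Proposition \ref{prop-derivative2} and then extend $\varphi$ to $(L^2)$ by using (\ref{3.4}) and (\ref{3.14})'' --- precisely your test-functional identification followed by the density argument in which (\ref{3.4}) controls $\partial_\eta$ and (\ref{3.14}) (valid for general $\varphi\in(L^2)$, as it rests only on (\ref{3.12})) controls the Bochner integral. As a minor remark, Proposition \ref{prop-derivative1}$(i)$ with $p=0$ already gives continuity of $\partial_\eta$ from $(L^2)$ into $(\calS_q)$ itself, so the detour through $(\calS_{-3})$ and the coarser space $(\calS_{q'})$ could be avoided, though your reconciliation via injectivity of the inclusions is sound.
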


\begin{corollary} \label{cor-kernel}
For $\varphi\in (L^2)$, the kernel function $K_\varphi(t;\cdot)$ in (\ref{3.5}) is exactly $\sum_{n=1}^\infty\,n\,I_{n-1}(\phi_n(t,\ldots))$, where, for any $q<0$,
$$
\int_{-\infty}^\infty\| K_\varphi(t;\cdot)\|_{2,q}^2\,dt\leq\,(1+\omega_{-q}^2)\cdot\|\varphi\|_{2,0}^2.
$$
\end{corollary}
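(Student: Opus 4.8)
The plan is to read off both assertions from the concrete representation of $\partial_\eta\varphi$ already obtained in Proposition \ref{prop-derivative3}: the identification of $K_\varphi$ follows from the uniqueness of the kernel supplied by the kernel theorem behind (\ref{3.5}), while the $L^2$-in-$t$ bound comes from integrating the pointwise estimate (\ref{3.14}) against the Wiener--It\^o isometry.

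First I would set $K(t):=\sum_{n=1}^\infty n\,I_{n-1}(\phi_n(t,\ldots))\in(\calS_q)$ and recall that, by Proposition \ref{prop-derivative3}, $\partial_\eta\varphi=\int_{-\infty}^\infty\eta(t)\,K(t)\,dt$ is a convergent Bochner integral in $(\calS_q)$ for every $\eta\in\scrC\calS_0$ and every $q<0$. For fixed $\psi\in(\calS)$ the pairing $\ll\cdot,\psi\gg$ is a continuous linear functional on $(\calS_q)=(\calS_{-(-q)})$, so it commutes with the Bochner integral, giving
$$
\ll\partial_\eta\varphi,\psi\gg=\int_{-\infty}^\infty\eta(t)\,\ll K(t),\psi\gg\,dt.
$$
Comparing this with the defining relation (\ref{3.5}), in which $\int_{\calS'}K_\varphi(t;x)\psi(x)\,\mu(dx)$ plays the role of $\ll K_\varphi(t;\cdot),\psi\gg$, I obtain the equality $\int_{-\infty}^\infty\eta(t)\ll K_\varphi(t;\cdot),\psi\gg\,dt=\int_{-\infty}^\infty\eta(t)\ll K(t),\psi\gg\,dt$ for all $\eta\in\scrC\calS$ and all $\psi\in(\calS)$. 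The uniqueness clause of the kernel theorem then forces $K_\varphi(t;\cdot)=K(t)$ for $[dt]$-almost every $t$, which is the first assertion.

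For the bound I would integrate (\ref{3.14}) over $t$ and use the isometry $\|I_{m}(g)\|_{2,0}^2=m!\,\|g\|_{L^2(\r^{m})}^2$ valid for symmetric kernels $g$. With $m=n-1$ this gives $n\,\|I_{n-1}(\phi_n(t,\ldots))\|_{2,0}^2=n!\int\cdots\int_{\r^{n-1}}|\phi_n(t,t_2,\ldots,t_n)|^2\,dt_2\cdots dt_n$, and hence, by Tonelli,
$$
\int_{-\infty}^\infty\sum_{n=1}^\infty n\,\|I_{n-1}(\phi_n(t,\ldots))\|_{2,0}^2\,dt=\sum_{n=1}^\infty n!\,\|\phi_n\|_{L^2(\r^n)}^2\leq\sum_{n=0}^\infty n!\,\|\phi_n\|_{L^2(\r^n)}^2=\|\varphi\|_{2,0}^2,
$$
the last equality being the Wiener--It\^o decomposition. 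Combining this with (\ref{3.14}) integrated in $t$ yields $\int_{-\infty}^\infty\|K_\varphi(t;\cdot)\|_{2,q}^2\,dt\leq(1+\omega_{-q}^2)\|\varphi\|_{2,0}^2$, as claimed.

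The step that I expect to need the most care is the kernel identification, where one must justify that the abstract kernel $K_\varphi\in\scrC\calS'\otimes(\calS)'$ produced by the kernel theorem is genuinely represented, slice-by-slice in $t$, by the $(\calS_q)$-valued map $t\mapsto K(t)$; in particular the measurability and square-integrability in $t$ that make $\int\|K_\varphi(t;\cdot)\|_{2,q}^2\,dt$ meaningful are exactly what Proposition \ref{prop-derivative3} and (\ref{3.14}) provide, so the representation and the bound reinforce each other. The remaining ingredients---commuting the pairing with the Bochner integral (legitimate because $\int|\eta(t)|\,\|K(t)\|_{2,q}\,dt<\infty$) and the Tonelli interchange of sum and integral for nonnegative terms---are routine.
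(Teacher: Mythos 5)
Your proof is correct and follows essentially the same route as the paper: the paper states this corollary as an immediate consequence of Proposition \ref{prop-derivative3}, identifying the kernel by comparing the Bochner-integral representation with (\ref{3.5}) (just as it did after Proposition \ref{prop-derivative2}), and obtaining the bound by integrating (\ref{3.14}) in $t$ and invoking the Wiener--It\^o isometry, which is precisely your computation. Your explicit appeal to the uniqueness clause of the kernel theorem and the Tonelli interchange merely spells out what the paper leaves implicit.
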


\section{Integration by Parts Formula}

For any probability measure $\calP$ on $(\calS',\,\scrB(\calS'))$, let $\calG_\calP$ be the class consisting of all complex-valued functions $\varphi$ on $\calS'$ satisfying the conditions: For any $h\in\calS_0$, (a) $\varphi$ is G\^ateaux differentiable at $x$ in the direction of $h$ for any $x\in\calS'$; (b) both $\varphi$ and $\delta\varphi(\cdot;\,h)$ belong to $\scrC L^\alpha(\calS',\,\calP)$ for some $\alpha>1$, where  $\delta\varphi(x;\,h)$ the G\^ateaux derivative of $\varphi$ at $x\in\calS'$ in the direction of $h$.

For any $h\in\calS_0$, let $e(h)=\,e^{\l\cdot,\,h\g-\frac{1}{2}\int_{-\infty}^\infty |h(t)|^2\,dt}$.  Then, for $\varphi\in\calG_\mu$,
\begin{align*}
\int_{\calS'}\frac{\varphi(x+rh)-\varphi(x)}{r}\,\mu(dx)=&\int_{\calS'}\varphi(x)
\cdot\left\{\frac{e(rh)(x)-1}{r}\right\}\,\mu(dx)\\
\to& \int_{\calS'}\l x,\,h\g\,\varphi(x)\,\mu(dx),~~~\mbox{as $r\to 0^+$},
\end{align*}
where the last term is obtained by the inequality that
\begin{equation*}
\left|\frac{e(rh)(x)-1}{r}\right|\leq \left\{|\l x,\,h\g|+\int_{-\infty}^\infty| h(t)|^2\,dt\right\}\cdot e^{|\l x,\,h\g|}\in L^{\alpha'}\!(\calS',\,\mu)
\end{equation*}
for any $0<z<1$, $\frac{1}{\alpha}+\frac{1}{\alpha'}=1$, and then applying the dominated convergence argument. On the other hand,
\begin{align*}
\int_{\calS'}\frac{\varphi(x+rh)-\varphi(x)}{r}\,\mu(dx)=&\int_{\calS'}\frac{1}{r}\int_0^r\delta \varphi(x+\vartheta\,h;\,\eta)\,d\vartheta\,\mu(dx)\\
=&\int_{\calS'}\delta\varphi(x;\,h)\cdot\left\{\frac{1}{r}\int_0^r\,e(\vartheta\,h)\,
d\vartheta\right\}\,\mu(dx)\\
\to&\int_{\calS'}\delta\varphi(x;\,h)\,\mu(dx),~~~\mbox{as $r\to 0^+$,}
\end{align*}
where the last term is obtained by applying the dominated convergence argument.
Putting together the above formulas shows the following integration by parts formula for the white noise measure $\mu$.

\begin{theorem} \label{thm-int-by-parts}
Let $\varphi:\calS'\to\c$ be in the class $\calG_\mu$.
Then, for any $h\in\calS_0$,
\begin{equation}\label{4.1}
\int_{\calS'} \l x,\, h\g\,\varphi(x)\,\mu(dx)=\int_{\calS'} \delta\varphi(x;\,h)\,\mu(dx).
\end{equation}
\end{theorem}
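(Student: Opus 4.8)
The plan is to evaluate the single difference quotient
$$
Q(r)=\int_{\calS'}\frac{\varphi(x+rh)-\varphi(x)}{r}\,\mu(dx)
$$
in two different ways as $r\to0^+$ and to equate the resulting limits. The one tool that drives both evaluations is the Cameron--Martin quasi-invariance of the white noise measure $\mu$: since (\ref{2.1}) identifies $\mu$ with the standard Gaussian measure of covariance $|\cdot|_0^2$, the space $\calS_0$ is precisely its Cameron--Martin space, and translation by any $v\in\calS_0$ sends $\mu$ to an equivalent measure with Radon--Nikodym density $e(v)$. Concretely, $\int_{\calS'}\varphi(x+v)\,\mu(dx)=\int_{\calS'}\varphi(x)\,e(v)(x)\,\mu(dx)$ for $v\in\calS_0$, and both evaluations of $Q(r)$ reduce to exploiting this identity.

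For the first evaluation I would take $v=rh$ to rewrite $Q(r)=\int_{\calS'}\varphi(x)\,\{(e(rh)(x)-1)/r\}\,\mu(dx)$. Since $\frac{d}{dr}\big|_{r=0}e(rh)(x)=\l x,\,h\g$, the bracketed factor converges pointwise to $\l x,\,h\g$, so the limit of $Q(r)$ is $\int_{\calS'}\l x,\,h\g\,\varphi(x)\,\mu(dx)$, the left-hand side of (\ref{4.1}). For the second evaluation I would use that $\varphi$ is G\^ateaux differentiable to write, by the fundamental theorem of calculus, $\varphi(x+rh)-\varphi(x)=\int_0^r\delta\varphi(x+\vartheta h;\,h)\,d\vartheta$. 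Dividing by $r$, interchanging the $x$- and $\vartheta$-integrations by Fubini, and applying quasi-invariance on each slice, $\int_{\calS'}\delta\varphi(x+\vartheta h;\,h)\,\mu(dx)=\int_{\calS'}\delta\varphi(x;\,h)\,e(\vartheta h)(x)\,\mu(dx)$, turns $Q(r)$ into $\int_{\calS'}\delta\varphi(x;\,h)\,\bigl\{\tfrac{1}{r}\int_0^r e(\vartheta h)(x)\,d\vartheta\bigr\}\,\mu(dx)$. Because the Ces\`aro average $\tfrac{1}{r}\int_0^r e(\vartheta h)(x)\,d\vartheta$ tends to $e(0)(x)=1$ as $r\to0^+$, the limit of $Q(r)$ is $\int_{\calS'}\delta\varphi(x;\,h)\,\mu(dx)$, the right-hand side of (\ref{4.1}). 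Equating the two limits of the same quantity $Q(r)$ yields the formula.

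The main obstacle, and the only place where the defining hypotheses of $\calG_\mu$ enter, is the justification of the two passages to the limit under the integral sign; I would handle both by dominated convergence combined with H\"older's inequality. Condition (b) supplies $\varphi,\,\delta\varphi(\cdot;\,h)\in\scrC L^\alpha(\calS',\,\mu)$ for some $\alpha>1$, so it suffices to dominate the remaining factor in $\scrC L^{\alpha'}(\calS',\,\mu)$ with $\tfrac{1}{\alpha}+\tfrac{1}{\alpha'}=1$. For the first evaluation the uniform bound
$$
\left|\frac{e(rh)(x)-1}{r}\right|\le\Bigl\{|\l x,\,h\g|+\int_{-\infty}^\infty|h(t)|^2\,dt\Bigr\}\,e^{|\l x,\,h\g|},\qquad 0<r<1,
$$
provides the dominating function, which lies in $\scrC L^{\alpha'}(\calS',\,\mu)$ because $\l x,\,h\g$ is Gaussian under $\mu$ and hence has finite exponential moments of every order. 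For the second evaluation the inner average is dominated by $\sup_{0\le\vartheta\le1}e(\vartheta h)(x)$, again an $\scrC L^{\alpha'}$ function by the same Gaussian exponential-moment estimate. Once these two dominations are secured, both limits exist, are computed as above, and coincide, which establishes (\ref{4.1}).
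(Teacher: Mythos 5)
Your proposal is correct and follows essentially the same route as the paper's own proof: evaluating the difference quotient $\int_{\calS'}r^{-1}(\varphi(x+rh)-\varphi(x))\,\mu(dx)$ in two ways via the Cameron--Martin density $e(rh)$, with the identical dominating function $\{|\l x,\,h\g|+\int_{-\infty}^\infty|h(t)|^2\,dt\}e^{|\l x,\,h\g|}$ for the first limit and dominated convergence for the second. The only difference is cosmetic: you make explicit the dominating function $\sup_{0\le\vartheta\le1}e(\vartheta h)(x)\le e^{|\l x,\,h\g|}$ for the Ces\`aro average, a detail the paper leaves implicit.
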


\begin{remark}
The integration by parts formula for abstract Wiener measures was obtained by Kuo \cite{Ka} in 1974.
Recently, under much weak conditions, Kuo and Lee \cite{LK} reformulated this formula and simplified the proof by applying the technique Stein used in proving his famous Stein's lemma (Proposition \ref{prop-characterization}) for normal distribution (see \cite{{Steina},{Steinb}}).
\end{remark}

The formula (\ref{4.1}) is an infinite dimensional analogue of the Stein identity for normal distribution, which also characterizes the white noise measure as follows.

\begin{theorem} \label{char-wn}
A probability measure $\calP$ on $(\calS',\,\scrB(\calS'))$ is equal to the white noise measure $\mu$ if and only if for any $\varphi\in\calG_\calP$, the following equality holds:
\begin{equation}\label{4.2}
\int_{\calS'}\bigl\{(x,\,\eta)\,\varphi(x)-\delta\varphi(x;\,\eta)\bigr\}\,\calP(dx)=0,~~~\forall~\eta\in\calS.
\end{equation}
\end{theorem}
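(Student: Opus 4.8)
The plan is to prove the two implications separately, treating the ``only if'' direction as a direct restatement of the integration by parts formula and the ``if'' direction as a Stein-type argument that pins down the characteristic functional of $\calP$.

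For the ``only if'' direction, suppose $\calP=\mu$. Then $\calG_\calP=\calG_\mu$, and for $\eta\in\calS\subset\calS_0$ the extended pairing satisfies $\l x,\eta\g=(x,\eta)$, so (\ref{4.2}) is precisely the integration by parts formula (\ref{4.1}) of Theorem \ref{thm-int-by-parts} applied with $h=\eta$. This direction therefore requires no new work.

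For the ``if'' direction, I would fix $\eta\in\calS$ and test (\ref{4.2}) against the bounded functionals $\varphi_s(x)=e^{\I s(x,\eta)}$, $s\in\r$. Each $\varphi_s$ lies in $\calG_\calP$, since it is bounded and its G\^ateaux derivative in any direction $h\in\calS_0$, namely $\delta\varphi_s(x;h)=\I s\,(h,\eta)\,e^{\I s(x,\eta)}$ with $(h,\eta)=\int_{-\infty}^\infty h(t)\eta(t)\,dt$, is again bounded and hence in every $\scrC L^\alpha(\calS',\calP)$. Computing $\delta\varphi_s(x;\eta)=\I s\,|\eta|_0^2\,e^{\I s(x,\eta)}$ and inserting $\varphi_s$ into (\ref{4.2}) gives, for every $s$,
$$
\int_{\calS'}(x,\eta)\,e^{\I s(x,\eta)}\,\calP(dx)=\I s\,|\eta|_0^2\int_{\calS'}e^{\I s(x,\eta)}\,\calP(dx).
$$
Because the left-hand integrand is $\calP$-integrable (this is part of what (\ref{4.2}) asserts; equivalently, testing against the constant $\varphi\equiv1$ already forces $(\cdot,\eta)\in\scrC L^1(\calS',\calP)$), the function $\phi(s):=\int_{\calS'}e^{\I s(x,\eta)}\,\calP(dx)$ is continuously differentiable with $\phi'(s)=\I\int_{\calS'}(x,\eta)\,e^{\I s(x,\eta)}\,\calP(dx)$, the differentiation under the integral sign being justified by dominated convergence with the $\calP$-integrable majorant $|(x,\eta)|$.

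Combining these two facts turns the displayed identity into the linear ODE $\phi'(s)=-s\,|\eta|_0^2\,\phi(s)$ with initial value $\phi(0)=\calP(\calS')=1$, whose unique solution is $\phi(s)=e^{-\frac12 s^2|\eta|_0^2}$. Taking $s=1$ shows that the characteristic functional of $\calP$ coincides with that of $\mu$ recorded in (\ref{2.1}), that is, $\int_{\calS'}e^{\I(x,\eta)}\,\calP(dx)=e^{-\frac12|\eta|_0^2}$ for every $\eta\in\calS$. Since a probability measure on $(\calS',\scrB(\calS'))$ is determined uniquely by its characteristic functional (the uniqueness part of the Minlos theorem), I conclude $\calP=\mu$. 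The main obstacle is the analytic justification in the ``if'' direction, namely securing the $\calP$-integrability of $(\cdot,\eta)$ so that $\phi$ is genuinely differentiable and the interchange of differentiation and integration is legitimate; once that regularity is in hand, deriving and solving the ODE and invoking uniqueness of the characteristic functional complete the argument routinely.
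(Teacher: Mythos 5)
Your proof is correct, and its core strategy coincides with the paper's: fix $\eta$, feed complex exponentials $e^{\I s(x,\eta)}$ into (\ref{4.2}), derive the ODE $\phi'(s)=-s|\eta|_0^2\phi(s)$ for the characteristic functional $\phi$, solve it, and invoke uniqueness of characteristic functionals. The difference lies in how the derivative $\phi'$ is justified. You first test (\ref{4.2}) with the constant function $\varphi\equiv 1$ (which indeed lies in $\calG_\calP$) to extract $(\cdot,\eta)\in\scrC L^1(\calS',\calP)$, and then differentiate under the integral sign by dominated convergence with majorant $|(x,\eta)|$. The paper instead never establishes integrability of $(\cdot,\eta)$ at all: it applies the mean value theorem to write the difference quotient exactly as $\frac{\phi_\eta(s)-\phi_\eta(r)}{s-r}=(\I/2)\int(x,\eta)\Psi_{s,r;\eta}(x)\,\calP(dx)$, where $\Psi_{s,r;\eta}$ is a combination of four bounded exponentials lying in $\calG_\calP$; the product $(x,\eta)\Psi_{s,r;\eta}(x)$ is then automatically bounded (being a difference of bounded functions divided by $s-r$), so (\ref{4.2}) applies with no integrability concerns, and letting $s\to r$ yields the ODE. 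What each buys: your route is the more standard and transparent Stein's-lemma argument, but it leans on reading the hypothesis (\ref{4.2}) as asserting integrability of the combined integrand for every $\varphi\in\calG_\calP$ — in particular for $\varphi\equiv 1$ — whereas the paper's MVT device only ever invokes (\ref{4.2}) for test functions whose associated integrands are bounded, so it proves the conclusion under a strictly more forgiving interpretation of the hypothesis. Since the natural reading of the theorem statement does include well-definedness of the integral, your argument stands as written.
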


\begin{proof}
If $\calP=\mu$, then relation (\ref{4.2}) is satisfied by virtue of Theorem 5.1.
Now, suppose $\calP$ satisfies (\ref{4.2}).
Let
$$
\phi_\eta(r)=\int_{\calS'}e^{\I r(x,\,\eta)}\,\calP(dx),~~~r\in\r,~\eta\in\calS.
$$
By the mean value theorem for differentiation, there are two real numbers $p_{sr},q_{sr}$ between $s$ and $r$ such that
$$
e^{\I \,s(x,\,\eta)}-e^{\I\, r(x,\,\eta)}=\frac{\I\,(s-r)}{2}\cdot(x,\eta)\cdot \Psi_{s,r;\eta}(x),~~~x\in\calS',
$$
where $\Psi_{s,r;\eta}(x)=\,e^{\I \,p_{sr}(x,\,\eta)}-e^{-\I\, p_{sr}(x,\,\eta)}+e^{\I\, q_{sr}(x,\,\eta)}+e^{-\I\, q_{sr}(x,\,\eta)}$. Then
\begin{equation*}
\frac{\phi_\eta(s)-\phi_\eta(r)}{s-r}=(\I/2)\int_{\calS'}\,(x,\eta)\cdot\Psi_{s,r;\eta}(x)\,\calP(dx).
\end{equation*}
It is clear that $\Psi_{s,r;\eta}\in\calG_\calP$ Then, by (\ref{4.2}) we have
\begin{align*}
\int_{\calS'}\,(x,\eta)\cdot\Psi_{s,r;\eta}(x)\,\calP(dx)=\,\I\,\sum_{j=1}^4a_jv_j\,\phi_\eta(v_j)\,|\eta|_0^2,
\end{align*}
where $v_1=p_{sr}=-v_2$, $v_3=q_{sr}=-v_4$, $a_1=a_3=a_4=1,a_2=-1$. Letting $s$ tend to $r$, we see that $\frac{d \phi_\eta(r)}{dr}$ exists, and $\frac{d \phi_\eta(r)}{dr}=\!-r\phi_\eta(r)\,|\eta|_0^2$ with $\phi_\eta(0)=1$.
Thus, $\phi_\eta(r)=\,e^{-\frac{1}{2}r^2\,|\eta|_0^2}$ and $\calP=\mu$.
\end{proof}

 By Theorem \ref{char-wn} and an observation of its proof, we can recover the following version of Stein's characterization of the normal distribution.

\begin{corollary} {\rm $($Stein's lemma$)$} A real-valued random variable $Y$ has the standard normal distribution if and only if
\begin{equation}\label{4.3}
\e[\,Yf(Y)]=\e[f'(Y)]
\end{equation}
for any bounded complex-valued function $f$ with bounded derivative $f'$.
\end{corollary}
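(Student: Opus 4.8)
The plan is to derive this finite-dimensional Stein's lemma as a specialization of Theorem \ref{char-wn}, exactly as the preceding remark suggests. The key observation is that a one-dimensional standard normal random variable can be realized on the white noise space $(\calS',\,\mu)$ as $Y=(\cdot,\,\eta_0)$ for a suitably normalized $\eta_0\in\calS$, namely one with $|\eta_0|_0=1$, since we know from Section 3 that $(\cdot,\,\eta)\sim\calN(0,\,|\eta|_0^2)$ under $\mu$. For the ``only if'' direction, where $Y\sim\calN(0,1)$ is given abstractly, I would realize $Y$ this way and then read off (\ref{4.2}) with $\calP=\mu$: given a bounded $f$ with bounded $f'$, form the cylinder functional $\varphi(x)=f((x,\,\eta_0))$, check that $\varphi\in\calG_\mu$, and compute its G\^ateaux derivative by the chain rule as $\delta\varphi(x;\,\eta_0)=f'((x,\,\eta_0))\,|\eta_0|_0^2=f'((x,\,\eta_0))$. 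Substituting into (\ref{4.1}) (Theorem \ref{thm-int-by-parts}, with $h=\eta_0$) then yields $\e[Yf(Y)]=\e[f'(Y)]$ directly.

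For the ``if'' direction I would mimic the converse argument inside the proof of Theorem \ref{char-wn} rather than invoke its full strength, since here we have only a single random variable $Y$ and not an entire measure on $\calS'$. Concretely, assume (\ref{4.3}) holds for all bounded $f$ with bounded $f'$, and study the characteristic function $\phi(r)=\e[e^{\I rY}]$. The same mean-value-theorem device used in the proof of Theorem \ref{char-wn}, applied to the difference quotient $(\phi(s)-\phi(r))/(s-r)$, shows that after taking real and imaginary parts the test functions $\cos(rY)$ and $\sin(rY)$ are admissible (bounded with bounded derivatives $-r\sin(rY)$, $r\cos(rY)$); feeding these into (\ref{4.3}) gives the differential equation $\phi'(r)=-r\,\phi(r)$ with $\phi(0)=1$, whence $\phi(r)=e^{-r^2/2}$ and $Y\sim\calN(0,1)$ by uniqueness of characteristic functions.

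The main technical obstacle is the justification of interchanging differentiation and expectation when handling the complex exponentials, together with verifying that all the functionals involved genuinely satisfy the integrability and G\^ateaux-differentiability hypotheses. In the ``only if'' direction one must confirm $\varphi=f\circ(\cdot,\,\eta_0)\in\calG_\mu$, which is immediate from the boundedness of $f$ and $f'$ and the fact that $(\cdot,\,\eta_0)\in\scrC L^\alpha(\calS',\,\mu)$ for every $\alpha>1$ (it is Gaussian, hence has moments of all orders). In the ``if'' direction the delicate point is passing from the real hypothesis (\ref{4.3}) to the complex-exponential test functions; this is handled exactly as in the proof of Theorem \ref{char-wn}, where the factor $\Psi_{s,r;\eta}$ keeps the difference quotient bounded and allows one to differentiate $\phi$ under the expectation. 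Once that interchange is licensed, the remaining computation is the routine solution of a first-order linear ODE, so I expect no further difficulty.
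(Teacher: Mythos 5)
Your proposal is correct and follows essentially the same route the paper intends: the paper derives this corollary from Theorem \ref{char-wn} ``and an observation of its proof,'' which is exactly your argument---realize $Y$ as $(\cdot,\,\eta_0)$ with $|\eta_0|_0=1$ and apply the integration by parts formula (\ref{4.1}) to the cylinder functional $f((\cdot,\,\eta_0))$ for the forward direction, and repeat the mean-value-theorem/characteristic-function ODE device from the proof of Theorem \ref{char-wn} for the converse. Your one-dimensional specialization, including the verification that the cylinder functionals lie in $\calG_\mu$ and that the exponential-type test functions are admissible under (\ref{4.3}), fills in precisely the details the paper leaves implicit.
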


Let $f$ be a function defined on $\calS'$ with values in a complex Banach space $W$. Then $f$ is said to be $\calS_0$-differentiable if the mapping $\phi(h)\equiv f(x+h)$, $h\in \calS_0$, is Fr\'echet differentiable at $0$ for any $x\in\calS'$. The Fr\'echet derivative $\phi'(0)$ at $0\in \calS_0$ is called the $\calS_0$-derivative of $f$ at $x\in \calS'$, denoted by $\l Df(x),\,h\g$. The $k$-th order $\calS_0$-derivatives of $f$ at $x$ are defined inductively and denoted by $D^kf(x)$ for $k\geq 2$ if they exist. One notes that $D^kf(x)$ is a bounded $k$-linear mapping from the Cartesian product $\calS_0\times\cdots\times \calS_0$ of $k$ copies of $\calS_0$ into $W$ for any $k\in\n$. In particular, when $W=\r$, $Df(x)\in \calS_0$ and $D^2f(x)$ is regarded as a bounded linear operator from $\calS_0$ into $\calS_0$ for any $x\in\calS'$ (see \cite{Kb}).

We can use Theorem \ref{thm-int-by-parts} to obtain another integration by parts formula which also characterizes the white noise measures.

\begin{theorem} \label{thm-int-by-parts2} {\rm $($cf. \cite{LK}$)$}
 A probability measure $\calP$ on $(\calS',\,\scrB(\calS'))$ is equal to the white noise measure $\mu$  if and only if, for any $\calS_0$-differentiable function $f$ on $\calS'$ such that $f(x)\in\scrC\calS_p$, $p>\frac{1}{2}$, for any $x\in\calS'$, $\|D f(\cdot)\|_{\rm tr} \in
 L^1(\calS',\,\calP)$ and $\int_{\calS'} |f(x)|^{\alpha}_p\,\calP(dx)<\infty$ for some $\alpha>1$,
 the following equality holds:
\begin{equation} \label{4.4}
\int_{\calS_{-p}} (x,\,f(x))_p\,\calP(dx) =\int_{\calS'}
 {\rm Tr} (D f(x))\,\calP(dx),
\end{equation}
where $(\cdot, \cdot)_p$ is the $\scrC\calS_{-p}$-$\scrC\calS_p$ pairing,
${\rm Tr}(\cdot)$ denotes the trace of a trace class operator on $\calS_0$ and $\|\cdot\|_{\rm tr}$ means
 the trace class norm.
\end{theorem}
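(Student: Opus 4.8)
The statement is an equivalence, and the two implications are of rather different character, so the plan is to treat them separately. I would handle the reverse implication (that $(\ref{4.4})$ forces $\calP=\mu$) by reduction to the scalar characterization already established in Theorem \ref{char-wn}, applied to rank-one vector fields. For the forward implication (that $\calP=\mu$ yields $(\ref{4.4})$) I would expand $f$ in the Hermite basis $\{h_n\}$ and apply the scalar integration by parts formula of Theorem \ref{thm-int-by-parts} coordinatewise, recognizing the resulting sum of derivative terms as the trace.

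For the reverse implication, I would assume $(\ref{4.4})$ for every admissible $f$, fix $\eta\in\calS$, and take $\varphi$ of the exponential type $e^{\I c(\cdot,\,\eta)}$, $c\in\r$ (more precisely the functions $\Psi_{s,r;\eta}$ assembled from such exponentials in the proof of Theorem \ref{char-wn}), setting $f(x)=\varphi(x)\,\eta$. Since $\eta\in\calS\subset\calS_p$ and $|\varphi(x)|\equiv1$, this $f$ takes values in $\scrC\calS_p$, is $\calS_0$-differentiable with $\l Df(x),\,h\g=\delta\varphi(x;\,h)\,\eta$, and has both $\|Df(\cdot)\|_{\rm tr}$ and $|f(\cdot)|_p$ bounded, so it is admissible. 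Then $(x,\,f(x))_p=\varphi(x)\,(x,\,\eta)$, and because $Df(x)$ is the rank-one operator $h\mapsto\delta\varphi(x;\,h)\,\eta$ one computes ${\rm Tr}(Df(x))=\delta\varphi(x;\,\eta)$. Thus $(\ref{4.4})$ collapses to $\int_{\calS'}(x,\,\eta)\,\varphi(x)\,\calP(dx)=\int_{\calS'}\delta\varphi(x;\,\eta)\,\calP(dx)$, which is exactly $(\ref{4.2})$ for these $\varphi$; the argument in the proof of Theorem \ref{char-wn} then forces $\calP=\mu$.

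For the forward implication I would write $f(x)=\sum_n\varphi_n(x)\,h_n$ with coordinates $\varphi_n(x)=(f(x),\,h_n)$, so that $(x,\,f(x))_p=\sum_n(x,\,h_n)\,\varphi_n(x)$ and ${\rm Tr}(Df(x))=\sum_n(\l Df(x),\,h_n\g,\,h_n)=\sum_n\delta\varphi_n(x;\,h_n)$. Applying Theorem \ref{thm-int-by-parts} to each $\varphi_n$ in the direction $h_n$ gives $\int_{\calS'}(x,\,h_n)\,\varphi_n(x)\,\mu(dx)=\int_{\calS'}\delta\varphi_n(x;\,h_n)\,\mu(dx)$, and summing over $n$ produces $(\ref{4.4})$ once the interchange of $\sum_n$ and $\int$ is justified. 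Here the hypotheses are exactly the right domination: on the left $|(x,\,f(x))_p|\le|x|_{-p}\,|f(x)|_p$, which is in $L^1(\mu)$ because $|x|_{-p}$ has all Gaussian moments and $|f(\cdot)|_p\in L^\alpha(\mu)$; on the right $\sum_n|(\l Df(x),\,h_n\g,\,h_n)|\le\|Df(x)\|_{\rm tr}\in L^1(\mu)$, so dominated convergence applies to both series.

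The main obstacle lies in this forward implication: to invoke Theorem \ref{thm-int-by-parts} coordinatewise I need each $\varphi_n\in\calG_\mu$, and although $\varphi_n\in\scrC L^\alpha(\calS',\mu)$ follows from $|\varphi_n(x)|\le(2n+2)^{-p}|f(x)|_p$, the trace-class hypothesis only delivers $\delta\varphi_n(\cdot;\,h)\in L^1(\mu)$ rather than the $L^\alpha$ integrability with $\alpha>1$ that the definition of $\calG_\mu$ (and the proof of Theorem \ref{thm-int-by-parts}) requires. I expect to bridge this by first proving $(\ref{4.4})$ for a dense subclass of smooth finite-rank fields $f=\sum_j g_j\,\xi_j$, $\xi_j\in\calS$, whose coordinate functions genuinely lie in $\calG_\mu$ and for which the coordinatewise application is unproblematic, and then approximating a general admissible $f$ by such fields and passing to the limit using precisely the two dominations above. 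Verifying that this approximation can be performed while keeping $\|Df\|_{\rm tr}$ controlled in $L^1(\mu)$ is the delicate point.
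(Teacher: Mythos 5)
Your proposal reproduces the paper's own proof in both directions. For sufficiency, the paper likewise tests (\ref{4.4}) on a rank-one exponential field, namely $f(x)=\I\,e^{\I\,(x,\,\eta)}\eta$, computes $\l Df(x),\,y\g=-(y,\,\eta)\,e^{\I\,(x,\,\eta)}\eta$ so that ${\rm Tr}(Df(x))=-|\eta|_0^2\,e^{\I\,(x,\,\eta)}$, checks admissibility via the Goodman and Fernique theorems, and then concludes $\calP=\mu$ ``by the same argument as in the proof of Theorem \ref{char-wn}'' --- exactly your reduction of (\ref{4.4}) to (\ref{4.2}) for exponential-type $\varphi$. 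For necessity, the paper sets $P_nx=\sum_{j=0}^n(x,\,h_j)\,h_j$, applies Theorem \ref{thm-int-by-parts} to each coordinate $(f(\cdot),\,h_j)$ in the direction $h_j$, and lets $n\to\infty$ by dominated convergence using precisely your two dominations $|(P_nx,\,f(x))|\le|f(x)|_p\,|x|_{-p}$ and $|{\rm Tr}\bigl(\l Df(x),\,P_n(\cdot)\g\bigr)|\le\|Df(x)\|_{\rm tr}$. The only point of divergence is your ``main obstacle'': the paper performs no finite-rank approximation at all, but simply asserts that each coordinate function $(f(\cdot),\,h_j)$ belongs to $\calG_\mu$, ``from the conditions (a) and (b) and by applying the Fernique theorem.'' Your scruple there is legitimate as far as the written argument goes --- the hypothesis $\|Df(\cdot)\|_{\rm tr}\in L^1(\calS',\,\calP)$ visibly yields only $L^1$ control of $\delta\varphi(x;\,h)=\bigl\l\l Df(x),\,h\g,\,h_j\bigr\g_0$, while membership in $\calG_\mu$, and hence the dominated-convergence proof of Theorem \ref{thm-int-by-parts}, asks for $L^\alpha$ with $\alpha>1$ --- so the dense-subclass bridge you sketch is an addition the paper does not supply (nor carry out); apart from flagging and attempting to repair that step, your plan coincides with the paper's proof.
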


\begin{proof}
 {\it Necessity.} Let $\{h_n\}_{n=0}^\infty$ be the CONS for $\calS_0$ as mentioned in Subsection 3.1, and for any $x\in\calS'$ and $n\in\n_0$, let $P_nx=\sum_{j=0}^n\,(x,\,h_j)\,h_j$.
Since $f$ is $\calS_0$-differentiable, $r^{-1}(f(x+rh)-f(x))$ converges to $\l Df(x),\,h\g$ uniformly with respect to $h$ on each bounded set in $\calS_0$ as $r\to 0$ for any $x\in\calS'$. This implies that $\varphi(x)\equiv (f(x),\,h_j)$, $x\in\calS'$, is G\^ateaux differentiable at $x$ in the direction of $h\in\calS_0$, and $\delta\varphi(x;\,h)=\bigl\l\l Df(x),\,h\g,\,h_j\bigr\g_0$ for any $j\in\n_0$. Moreover, it follows from the conditions (a) and (b) and by applying the Fernique theorem (see \cite{Kb}) that $\varphi$ is in $\calG_\mu$. Then, by applying Theorem 5.1, we see that
\begin{align} \label{4.5}
\int_{\calS_{-p}} (P_nx,\,f(x))_p \,\mu(dx) =&\sum_{j=0}^n\int_{\calS_{-p}}
  ( f(x),\,h_j )(x,\,h_j) \,\mu(dx)\nonumber\\
=& \sum_{j=0}^n\int_{\calS_{-p}}
  \bigl\l\l Df(x),\,h_j\g,\,h_j\bigr\g_0 \,\mu(dx) \nonumber  \\
  =& \int_{\calS_{-p}} {\rm Tr}\big(\l Df(x),\,P_n(\cdot)\g\bigr)\,\mu(dx).
\end{align}
Note that for all $x\in \calS_{-p}$ and $n\in\n_0$,
 \[
 \big|{\rm Tr}\big(\l Df(x),\,P_n(\cdot)\g\bigr)\big|
 \; \le \; \|Df(x)\|_{\rm tr},
 \]
 \[
 \big|\left(P_nx,\,f(x)\right)\big|
 \; \le \; |
 f(x)|_p\,|P_n x|_{-p}
 \; \le \; |f(x)|_p\,|x|_{-p}.
 \]
Let $n$ tend to infinity, and then obtain Equation (\ref{4.4}) by applying the Lebesgue dominated convergence
theorem to (\ref{4.5}).

{\it Sufficiency.} Fix $\eta\in\calS$. Let $f(x)=\I\,e^{\I\,(x,\,\eta)}\eta$ for any $x\in\calS'$. Then $f$ is Fr\'echet differentiable on $\calS'$, and $\l Df(x),\,y\g=-(y,\,\eta)\,e^{\I\,(x,\,\eta)}\eta$ for any $y\in\calS'$. Thus $Df(x)$ is a bounded linear operator from $\calS_{-p}$ into $\scrC\calS_p$ for $p>\frac{1}{2}$, the operator norm of which is less than or equal to $|\eta|_p^2$. By applying the Goodman theorem and Fernique theorem (see \cite{Kb}),
$$
\int_{\calS'}\| Df(x)\|_{{\rm tr}}\,\calP(dx)\leq |\eta|_p^2\int_{\calS_{-p}}|x|_{-p}^2\,\mu(dx)<+\infty.
$$
 In addition, for any $1\leq \alpha<+\infty$, $\int_{\calS'} |f(x)|^{\alpha}_p\,\calP(dx)=|\eta|_p^\alpha<+\infty$. By the assumption, $f$ satisfies the identity (\ref{4.4}) and we have
\begin{equation*}
  \int_{\calS'}( x,\,\eta)\,e^{\I\,(x,\,\eta)}\,\calP(dx)=\I\,|\eta|_0^2\,\int_{\calS'}e^{\I\,(x,\,\eta)}\,\calP(dx).
\end{equation*}
 By the same argument as in the proof of Theorem \ref{char-wn}, $\int_{\calS'}e^{\I\,(x,\,\eta)}\,\calP(dx)=e^{-\frac{1}{2}|\eta|_0^2}$. The proof is complete.
\end{proof}

\begin{remark}
In Theorem \ref{thm-int-by-parts2}, the assumption ``$p>\frac{1}{2}$" is necessary based on the fact that $(\calS_0,\,\calS_{-p})$, $p>\frac{1}{2}$, is an abstract Wiener space.
\end{remark}

\vskip 0.2cm

\noindent
{\it $-$~Application to Number operators}

\vskip 0.2cm

Let $f$ be a complex-valued function on $\calS'$. If $f$ is twice $\calS_0$-differentiable at $x\in\calS'$ and $D^2f(x)$ is a trace-class operator on $\calS_0$, its trace is known as the Gross Laplacian $\Delta_{{}_G}f(x)$ of $f$ at $x$: $\Delta_{{}_G}f(x)={\rm Tr}(D^2f(x))$ (see \cite{Gross}).
In particular, if $f$ is twice Fr\'echet differentiable in $\calS_{-p}$ with $p>\frac{1}{2}$, then the restriction $D^2f(x)|_{\calS_0}$ of $D^2f(x)$ to $\calS_0$ is automatically of trace class on $\calS_0$ by the Goodman theorem (see \cite{Kb}).

Now, if $f$ is twice $\calS_0$-differentiable at $x\in\calS_{-p}$, $p>\frac{1}{2}$, such that $Df(x)\in\calS_p$ and $D^2f(x)$ is a trace-class operator on $\calS_0$, we define the Beltrami Laplacian
$$
\Delta_{{}_B} f(x)=\Delta_{{}_G}f(x)-(x,\,Df(x))_p.
$$

For $\varphi\in (\calS)$ and $x\in\calS'$, $D\varphi(x)$ is a continuous linear functional on $\calS'$.
Since $\calS$ is a nuclear space, $D\varphi(x)\in\calS$.
Similarly, $D^2\varphi(x)$ is a continuous linear operator from $\calS'$ into $\calS$.
Then it follows from the Goodman theorem that $D^2\varphi(x)|_{\calS_0}$ is a trace-class operator on $\calS_0$, and thus $\Delta_{{}_B}\varphi(x)$ exists.

As a consequence of Theorem \ref{thm-int-by-parts2}, we can see that for any $n\in\n_0$,
$$
\Delta_{{}_B}\int_{\calS'}\prod_{j=1}^n(x+\I\,y,\,h_{k_j})\,\mu(dy)=-n\int_{\calS'}\prod_{j=1}^n(x+\I\,y,\,h_{k_j})\,\mu(dy),
$$
where $k_j$'s $\in\n_0$.
See also \cite{La}.
In fact, $\Delta_{{}_B}$ is densely defined on $(L^2)$.
The closure of the operator $-\Delta_{{}_B}$, denoted by $\calN$, is known as the number operator. Then the domain Dom($\calN$) of $\calN$ is
$$
{\rm Dom}(\,\calN)=\left\{\varphi\in (L^2);\,\,\sum_{n=0}^\infty\,\frac{n^2}{n!}\|D^nS\varphi(0)\|_{\calH\calS^n(\scrC\calS_0)}^2<+
\infty\right\}.
$$
It is obvious that $(\calS_p)\subset {\rm Dom}(\calN)$ for $p\geq \frac{1}{2}$.

Now, let $\varphi,\psi\in (\calS)$. By Theorem \ref{thm-int-by-parts2},
\begin{align}
&\int_{\calS'}\varphi(x)\,\calN\psi(x)\,\mu(dx)\nonumber\\
&~~~=-\int_{\calS'}\varphi(x)\,\Delta_{{}_G}\psi(x)\,\mu(dx)+\int_{\calS'}\varphi(x)\,(x,\,D\psi(x))\,\mu(dx)\nonumber\\
&~~~=-\int_{\calS'}\varphi(x)\,\Delta_{{}_G}\psi(x)\,\mu(dx)+\int_{\calS'}(x,\,\varphi(x)D\psi(x))\,\mu(dx)\nonumber\\
&~~~=-\int_{\calS'}\varphi(x)\,\Delta_{{}_G}\psi(x)\,\mu(dx)+\int_{\calS'}{\rm Tr}(D(\varphi(\cdot)D\psi(\cdot))(x))\,\mu(dx).\label{4.6}
\end{align}
 Observe that for any $y,z\in\calS'$,
\begin{align*}
( D_y(\varphi(\cdot)D\psi(\cdot))(x),\,z)=\,& D_y\varphi(x)\, D_z\psi(x)+\varphi(x)\,D^2\psi(x)(y,z)\\
                             =\,& \partial_y\varphi(x)\, \partial_z\psi(x)+\varphi(x)\,D^2\psi(x)(y,z).
\end{align*}
Then, for any $x\in\calS'$, it follows from Proposition \ref{prop-derivative2} that
\begin{align}\label{4.7}
&{\rm Tr}(D(\varphi(\cdot)D\psi(\cdot))(x))\nonumber\\
&~~~=\sum_{n=0}^\infty\partial_{h_n}\varphi(x)\, \partial_{h_n}\psi(x)+\varphi(x)\,\Delta_{{}_G}\psi(x)\nonumber\\
&~~~=\sum_{n=0}^\infty\int_{-\infty}^\infty h_n(t)\,\partial_t\varphi(x)\,dt\int_{-\infty}^\infty h_n(t)\, \partial_t\psi(x)\,dt+\varphi(x)\,\Delta_{{}_G}\psi(x)\nonumber\\
&~~~=\int_{-\infty}^\infty\partial_t\varphi(x)\, \partial_t\psi(x)\,dt+\varphi(x)\,\Delta_{{}_G}\psi(x).
\end{align}
Combining (\ref{4.6}) with (\ref{4.7}), applying Proposition \ref{prop-derivative1} and by extension, we have the following

\begin{theorem} \label{thm-inner-prod}
For any $\varphi,\psi\in (\calS_p)$ with $p>\frac{1}{2}$,
$$
\ll\,\calN\varphi,\,\psi\gg_{2,0}=\int_{-\infty}^\infty\ll\partial_t\varphi,\,\partial_t\psi\gg_{2,0}\,dt,
$$
where $\ll\cdot,\cdot\gg_{2,0}$ is the inner product induced by $\|\cdot\|_{2,0}$-norm. Moreover,
$$
\int_{-\infty}^\infty\|\partial_t\,\varphi\|_{2,0}\|\partial_t\,\psi\|_{2,0}\,dt\leq\,\omega_p^2\cdot 2^{2p}\cdot\|\varphi\|_{2,p}\cdot\|\psi\|_{2,p}\left\{\sum_{n=0}^\infty\,(2n+2)^{-2p}\right\}.
$$
\end{theorem}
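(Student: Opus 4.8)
The plan is to prove the identity first for test functionals $\varphi,\psi\in(\calS)$, where both sides are classical objects, and then to extend it to all of $(\calS_p)$ with $p>\frac12$ by density; the ``Moreover'' estimate is precisely the continuity bound that makes this extension legitimate. For $\varphi,\psi\in(\calS)$ the identity is already implicit in the preceding computation: substituting (\ref{4.7}) into (\ref{4.6}), the two occurrences of $\int_{\calS'}\varphi(x)\,\Delta_{{}_G}\psi(x)\,\mu(dx)$ cancel and leave $\int_{\calS'}\varphi(x)\,\calN\psi(x)\,\mu(dx)=\int_{\calS'}\!\int_{-\infty}^\infty\partial_t\varphi(x)\,\partial_t\psi(x)\,dt\,\mu(dx)$. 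Since $\calN$ is self-adjoint and both $\calN$ and each $\partial_t$ commute with complex conjugation, replacing $\psi$ by $\overline{\psi}$ turns this bilinear relation into the asserted sesquilinear one, $\ll\calN\varphi,\psi\gg_{2,0}=\int_{-\infty}^\infty\ll\partial_t\varphi,\partial_t\psi\gg_{2,0}\,dt$, once Fubini's theorem (justified by the bound below) is invoked to exchange the $t$-integral with the $\mu$-expectation.

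The technical core is the quantitative bound. I would apply Proposition \ref{prop-derivative1}$(i)$ with $q=0$ and the admissible direction $\eta=\delta_t\in\scrC\calS_{-p}$ (admissible precisely because $p>\frac12$): for every $t\in\r$ this gives $\|\partial_t\varphi\|_{2,0}\leq\omega_p\cdot2^{p}\cdot\|\varphi\|_{2,p}\cdot|\delta_t|_{-p}$ and likewise for $\psi$. Multiplying the two pointwise inequalities and integrating in $t$ yields
$$\int_{-\infty}^\infty\|\partial_t\varphi\|_{2,0}\|\partial_t\psi\|_{2,0}\,dt\leq\omega_p^2\cdot2^{2p}\cdot\|\varphi\|_{2,p}\,\|\psi\|_{2,p}\int_{-\infty}^\infty|\delta_t|_{-p}^2\,dt.$$
The last factor is computed from $|\delta_t|_{-p}^2=\sum_{n=0}^\infty(2n+2)^{-2p}|(\delta_t,h_n)|^2=\sum_{n=0}^\infty(2n+2)^{-2p}\,h_n(t)^2$, so that the $L^2(\r,dt)$-orthonormality of the Hermite functions gives $\int_{-\infty}^\infty|\delta_t|_{-p}^2\,dt=\sum_{n=0}^\infty(2n+2)^{-2p}$, which is finite for $p>\frac12$ and reproduces exactly the stated bound.

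Finally I would extend both conclusions from $(\calS)$ to $(\calS_p)$. Pick $\varphi_k,\psi_k\in(\calS)$ with $\varphi_k\to\varphi$ and $\psi_k\to\psi$ in $\|\cdot\|_{2,p}$. On the left, $\calN$ maps $(\calS_p)$ boundedly into $(L^2)$ for $p>\frac12$: comparing Hilbert--Schmidt norms over $\scrC\calS_0$ and $\scrC\calS_{-p}$ gives $\|D^nS\varphi(0)\|_{\calH\calS^n(\scrC\calS_0)}^2\leq2^{-2pn}\|D^nS\varphi(0)\|_{\calH\calS^n(\scrC\calS_{-p})}^2$, whence $\|\calN\varphi\|_{2,0}^2\leq\bigl(\sup_{n\in\n_0}n^2\,2^{-2pn}\bigr)\|\varphi\|_{2,p}^2$, so $\ll\calN\varphi_k,\psi_k\gg_{2,0}\to\ll\calN\varphi,\psi\gg_{2,0}$. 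On the right, the bound just proved shows that $(\varphi,\psi)\mapsto\int_{-\infty}^\infty\ll\partial_t\varphi,\partial_t\psi\gg_{2,0}\,dt$ is a continuous sesquilinear form on $(\calS_p)$; splitting the difference into the telescoping terms $\int\ll\partial_t(\varphi_k-\varphi),\partial_t\psi_k\gg_{2,0}\,dt$ and $\int\ll\partial_t\varphi,\partial_t(\psi_k-\psi)\gg_{2,0}\,dt$ and estimating each by that bound forces convergence. Letting $k\to\infty$ in the $(\calS)$-identity then gives the theorem. The main obstacle I anticipate is not any isolated computation but making the extension rigorous: one must verify that the pointwise-in-$t$ estimate of Proposition \ref{prop-derivative1} is strong enough, after integration in $t$, to render the right-hand side continuous in the $\|\cdot\|_{2,p}$-topology. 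This is exactly where the choice $\eta=\delta_t$ and the convergence of $\sum_{n}(2n+2)^{-2p}$---hence the hypothesis $p>\frac12$---are indispensable.
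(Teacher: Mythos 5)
Your proposal is correct and follows essentially the same route as the paper: the identity for test functionals by combining (\ref{4.6}) with (\ref{4.7}) (the Gross Laplacian terms cancelling), the quantitative bound from Proposition \ref{prop-derivative1} with $\eta=\delta_t$, $q=0$ together with $\int_{-\infty}^\infty|\delta_t|_{-p}^2\,dt=\sum_{n=0}^\infty(2n+2)^{-2p}$, and then extension to $(\calS_p)$ by density. The paper compresses all of this into one sentence (``Combining (\ref{4.6}) with (\ref{4.7}), applying Proposition \ref{prop-derivative1} and by extension''), so your write-up merely supplies details the paper leaves implicit, such as the conjugation/self-adjointness step and the boundedness of $\calN:(\calS_p)\to(L^2)$.
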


Observe that for $\varphi\in (L^2)$, it is analogous to Corollary 3.5 that we have
$$
\int_{-\infty}^\infty \| K_\varphi(t;\cdot)\|_{2,0}^2\,dt=\sum_{n=1}^\infty\,n\,\|I_n(\phi_n)\|_{2,0}^2.
$$
Then we have more general results than those in Proposition \ref{prop-derivative3}, Corollary \ref{cor-kernel} and Theorem \ref{thm-inner-prod} as follows.

\begin{theorem} \label{thm-derivative}
\begin{enumerate}
  \item[$(i)$] Let $\varphi\in {\rm Dom}(\,\calN^{1/2})$. Then $K_\varphi(t)\in (L^2)$ for $[dt]$-almost all $t\in\r$ and satisfies
  $$
\int_{-\infty}^\infty\|K_\varphi(t;\cdot)\|_{2,0}^2\,dt=\,\|\calN^{1/2}\varphi\|_{2,0}^2.
$$
Moreover, for any $\eta\in\scrC\calS_0$,
$$
\partial_\eta\varphi=\int_{-\infty}^\infty\eta(t)\,K_\varphi(t;\,\cdot)\,dt~~~\mbox{in $(L^2)$},
$$
where
$$
\|\partial_\eta\varphi\|_{2,0}\leq\,|\eta|_0\cdot\|\calN^{1/2}\varphi\|_{2,0}.
$$
  \item[$(ii)$] Let $\varphi\in{\rm Dom}(\,\calN)$ and $\psi\in (L^2)$. Then the sum
  $$
 \sum_{n=1}^\infty n^2\ll I_{n-1}(\phi_n(t,\ldots)),\,I_{n-1}(\psi_n(t,\ldots))\gg_{2,0},
$$
denoted by $[ K_\varphi(t;\cdot),\,K_\psi(t;\cdot)]_{2,0}$, absolutely converges for $[dt]$-almost all $t\in\r$, where $\psi_n$ is the kernel function defined analogously as $\phi_n$ in (\ref{3.11}) by replacing $\varphi$ by $\psi$. Moreover,
$$
\ll\,\calN\varphi,\,\psi\gg_{2,0}=\int_{-\infty}^\infty [K_\varphi(t;\cdot),\,K_\psi(t;\cdot) ]_{2,0}\,dt.
$$
 \end{enumerate}
\end{theorem}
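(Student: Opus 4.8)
\emph{Plan.} Both parts reduce to the Wiener--It\^o chaos decomposition $\varphi=\sum_{n}I_n(\phi_n)$ together with two standard facts: the orthogonality of distinct chaoses and the isometry $\ll I_m(g),I_m(h)\gg_{2,0}=m!\,\langle g,h\rangle_{\scrC L^2(\r^m)}$, and the formula $K_\varphi(t)=\sum_{n\geq1}n\,I_{n-1}(\phi_n(t,\ldots))$ from Corollary \ref{cor-kernel}. Since $\calN I_n(\phi_n)=n\,I_n(\phi_n)$, one has $\|\calN^{s}\varphi\|_{2,0}^2=\sum_n n^{2s}\,n!\,|\phi_n|_{\scrC L^2(\r^n)}^2$ for $s=\tfrac12,1$, so membership in ${\rm Dom}(\calN^{1/2})$ or ${\rm Dom}(\calN)$ is exactly summability of these weighted series. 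The whole proof is then a matter of Tonelli/Fubini bookkeeping with the correct distribution of the powers of $n$.

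For $(i)$ I would first establish the isometry. By chaos orthogonality and the isometry formula, $\|K_\varphi(t)\|_{2,0}^2=\sum_{n\geq1}n^2\|I_{n-1}(\phi_n(t,\ldots))\|_{2,0}^2=\sum_{n\geq1}n^2(n-1)!\,|\phi_n(t,\ldots)|_{\scrC L^2(\r^{n-1})}^2$. Integrating in $t$ and applying Tonelli (all terms nonnegative) yields $\int_{-\infty}^\infty\|K_\varphi(t)\|_{2,0}^2\,dt=\sum_{n\geq1}n^2(n-1)!\,|\phi_n|_{\scrC L^2(\r^n)}^2=\sum_{n\geq1}n\,n!\,|\phi_n|^2=\|\calN^{1/2}\varphi\|_{2,0}^2$, which is finite because $\varphi\in{\rm Dom}(\calN^{1/2})$; in particular $K_\varphi(t)\in (L^2)$ for a.e.\ $t$. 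For the ``moreover'', Cauchy--Schwarz gives $\int_{-\infty}^\infty|\eta(t)|\,\|K_\varphi(t)\|_{2,0}\,dt\leq|\eta|_0\,\|\calN^{1/2}\varphi\|_{2,0}<\infty$, so $\int\eta(t)K_\varphi(t)\,dt$ converges as an $(L^2)$-valued Bochner integral with the stated norm bound; since $(L^2)\hookrightarrow(\calS_q)$ continuously for $q<0$, this $(L^2)$-integral pushes forward to the $(\calS_q)$-valued integral of Proposition \ref{prop-derivative3}, which equals $\partial_\eta\varphi$, so the two coincide.

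For $(ii)$ the essential point is an \emph{asymmetric} splitting of the weight $n^2=n^{3/2}\cdot n^{1/2}$ that lets $\varphi$ absorb the full ${\rm Dom}(\calN)$ weight while $\psi$ need only lie in $(L^2)$. Writing $a_n(t)=\|I_{n-1}(\phi_n(t,\ldots))\|_{2,0}$ and $b_n(t)=\|I_{n-1}(\psi_n(t,\ldots))\|_{2,0}$, the computation of $(i)$ carried out with weights $n^3$ and $n$ gives $\int\sum_n n^3a_n(t)^2\,dt=\|\calN\varphi\|_{2,0}^2$ and $\int\sum_n n\,b_n(t)^2\,dt=\|\psi\|_{2,0}^2$, both finite. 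Hence $\sum_n n^3a_n(t)^2<\infty$ and $\sum_n n\,b_n(t)^2<\infty$ for a.e.\ $t$, and the pointwise bound $\sum_n n^2a_n(t)b_n(t)\leq(\sum_n n^3a_n(t)^2)^{1/2}(\sum_n n\,b_n(t)^2)^{1/2}$ shows that $[K_\varphi(t),K_\psi(t)]_{2,0}$ converges absolutely for a.e.\ $t$. Integrating this bound and using Cauchy--Schwarz in $t$ gives $\int\sum_n n^2|\ll I_{n-1}(\phi_n(t,\ldots)),I_{n-1}(\psi_n(t,\ldots))\gg_{2,0}|\,dt\leq\|\calN\varphi\|_{2,0}\,\|\psi\|_{2,0}<\infty$, which legitimizes Fubini. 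Swapping sum and integral and using $\int\ll I_{n-1}(\phi_n(t,\ldots)),I_{n-1}(\psi_n(t,\ldots))\gg_{2,0}\,dt=(n-1)!\,\langle\phi_n,\psi_n\rangle_{\scrC L^2(\r^n)}$ then yields $\int[K_\varphi(t),K_\psi(t)]_{2,0}\,dt=\sum_n n^2(n-1)!\,\langle\phi_n,\psi_n\rangle=\sum_n n\,n!\,\langle\phi_n,\psi_n\rangle=\ll\calN\varphi,\psi\gg_{2,0}$.

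The main obstacle is getting the bookkeeping in $(ii)$ exactly right: one must recognize that the symmetric split $n^2=n\cdot n$ would force $\psi\in{\rm Dom}(\calN^{1/2})$, whereas the asymmetric split $n^{3/2}\cdot n^{1/2}$ exploits $\varphi\in{\rm Dom}(\calN)$ to require only $\psi\in (L^2)$ --- this is precisely why the hypotheses are asymmetric. The remaining care is routine: justifying the Tonelli/Fubini interchanges and invoking the chaos isometry $\ll I_m(g),I_m(h)\gg_{2,0}=m!\,\langle g,h\rangle$ to pass between kernels and $(L^2)$-norms.
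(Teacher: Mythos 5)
Your proposal is correct and follows essentially the same route the paper takes (the paper merely records the observation $\int_{-\infty}^\infty\|K_\varphi(t;\cdot)\|_{2,0}^2\,dt=\sum_{n\geq1}n\,\|I_n(\phi_n)\|_{2,0}^2$ and declares the theorem an extension of Proposition \ref{prop-derivative3}, Corollary \ref{cor-kernel} and Theorem \ref{thm-inner-prod}; your chaos-expansion bookkeeping with Tonelli/Fubini, the chaos isometry, and the asymmetric Cauchy--Schwarz split $n^2=n^{3/2}\cdot n^{1/2}$ is exactly the computation being gestured at, written out in full). One cosmetic slip: since $K_\psi$ involves only the chaoses $n\geq1$, you have $\int\sum_{n\geq1}n\,b_n(t)^2\,dt=\sum_{n\geq1}n!\,|\psi_n|^2\leq\|\psi\|_{2,0}^2$ rather than equality (equality only when $\e[\psi]=0$), which does not affect the argument.
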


\vskip 0.2cm

\noindent{\bf Note.}~In the sequel, we will identify $K_\varphi(t;\cdot)$ with $\partial_t\varphi$ for $\varphi\in (L^2)$.

\begin{remark}
If both $K_\varphi(t;\cdot)$ and $K_\psi(t;\cdot)$ are in $(L^2)$, then
$$
[ K_\varphi(t;\cdot),\,K_\psi(t;\cdot)]_{2,0}=\ll  K_\varphi(t;\cdot),\,K_\psi(t;\cdot)\gg_{2,0}.
$$
In fact, ${\rm Dom}(\calN^{1/2})={\mathbb D}^{1,2}$, and for $\varphi\in{\rm Dom}(\calN^{1/2})$, $K_\varphi(\cdot,x)$ coincides with the Malliavin derivative $D\varphi(x)$ of $\varphi$, $x\in\calS'$.
\end{remark}

\section{Connecting Stein's Method with Hida Calculus for White Noise Functionals}

 Let $\scrH$ be a separating class of Borel-measurable complex-valued test functions on $\r$, which means that  any two real-valued random variables $F,G$ satisfying $\e[h(F)]=\e[h(G)]$ for every $h\in\scrH$ have the same law. Of course $F$ and $G$ are assumed to be such that both $\e h(F)$ and $\e h(G)$ exists for all $h \in \scrH$. For any two such real-valued random variables $F$ and $G$, the distance between the laws of $F$ and $G$, induced by $\scrH$, is given by
$$
d_\scrH(F,\,G)=\sup\{|\e[h(F)]-\e[h(G)]|;\,h\in\scrH\}.
$$

Let $\scrH$ be such that $\e h(Z)$ exists for $h \in \scrH$, where $Z$ has the standard normal distribution. Let $f_h$ be the solution, given by (\ref{Stein-eqn-soln}), of the Stein equation
\begin{equation}
f'(w) - wf(w) = h(w) - \e h(Z).
\end{equation}
Assume that $\scrH$ is such that $f_h$ is bounded and absolutely continuous with bounded $f'_h$ for $h \in \scrH$. Then for any random variable $Y$ such that $\e h(Y)$ exists for $h \in \scrH$, we have
$$
d_\scrH(Y,\,Z)=\,\sup\{|\e[f_h'(Y)-Yf_h(Y)]|;\,h\in\scrH\}.
$$

Consider $(\calS',\scrB(\calS'),\mu)$ as the underlying probability space. Assume that $\varphi\in{\rm Dom}(\,\calN^{1/2})$ with $\e[\varphi]=0$. Define
$$
\calN^{-1}\varphi=\sum_{n=1}^\infty\,\frac{1}{nn!}\,:D^nS\varphi(0)x^n:.
$$
Then $\varphi=\calN\calN^{-1}\varphi$ since $\e[\varphi]=0$.
It is noted that $\calN^{-1}\varphi\in {\rm Dom}(\calN)$ and $f_h(\varphi)\in (L^2)$. By Theorem \ref{thm-derivative},
\begin{align}
\int_{\calS'}\varphi(x)\,f_h(\varphi(x))\,\mu(dx)=&\,\ll\,\calN\calN^{-1}\varphi,\,
f_h(\varphi)\gg_{2,0}\nonumber\\
=&\int_{-\infty}^\infty [\,\partial_t\,\calN^{-1}\varphi,\,\partial_t f_h(\varphi)]_{2,0}\,\,dt. \label{5.1}
\end{align}

Take a sequence $\{\varphi_k\}\subset (\calS)$ such that $\calN^{1/2}\varphi_k\to \calN^{1/2}\varphi$ in $(L^2)$.
Then
$$
\| f_h(\varphi_k)-f_h(\varphi)\|_{2,0}\leq\,|f_h'|_\infty\cdot\| \varphi_k-\varphi\|_{2,0}\to 0,~~~\mbox{as $k\to\infty$},
$$
and thus
\begin{equation}\label{5.2}
\ll\,\calN\calN^{-1}\varphi,\,f_h(\varphi_k)\gg_{2,0}\to\ll\,\calN\calN^{-1}\varphi,\,
f_h(\varphi)\gg_{2,0}~~~\mbox{ as $k\to\infty$}.
\end{equation}
On the other hand, for each $k\in\n$, it follows by the chain rule that
\begin{align*}
\left.\frac{d}{dw}\right|_{w=0}Sf_h(\varphi_k)(z+w\delta_t)=&\left.\frac{d}{dw}\right|_{w=0}\int_{\calS'}f_h(\varphi_k)(z+w\delta_t+x)\,\mu(dx)\\
=&\int_{\calS'}f_h'(\varphi_k(z+x))\,\partial_t\varphi_k(z+x)\,\mu(dx)\\
=&~S(f_h'(\varphi_k)\,\partial_t\varphi_k)(z),~~~z\in\scrC\calS_0,
\end{align*}
and then
\begin{equation}\label{5.3}
\int_{-\infty}^\infty [\,\partial_t\,\calN^{-1}\varphi,\,\partial_t f_h(\varphi_k)]_{2,0}\,\,dt=\int_{-\infty}^\infty \ll\,\partial_t\,\calN^{-1}\varphi,\,f_h'(\varphi_k)\,\partial_t\varphi_k\gg_{2,0}\,\,dt.
\end{equation}
 By Theorem \ref{thm-derivative}, $f_h'(\varphi)\,\partial_t\varphi\in (L^2)$ for almost all $t\in\r$. We would like to estimate
\begin{align}
&\left|\int_{-\infty}^\infty \ll\,\partial_t\,\calN^{-1}\varphi,\,f_h'(\varphi_k)\,\partial_t\varphi_k-f_h'(\varphi)\,\partial_t\varphi\gg_{2,0}\,\,dt\right|\nonumber\\
&~~~~~~\leq\int_{-\infty}^\infty |\ll\,\partial_t\,\calN^{-1}\varphi,\,(f_h'(\varphi_k)-f_h'(\varphi))\,\partial_t\varphi_k\gg_{2,0}|\,\,dt\nonumber\\
&~~~~~~~~~+\int_{-\infty}^\infty |\ll\,\partial_t\,\calN^{-1}\varphi,\,f_h'(\varphi)(\,\partial_t\varphi_k-\partial_t\,\varphi)\gg_{2,0}|\,\,dt\nonumber\\
&~~~~~~\leq\,3|f_h'|_\infty\int_{-\infty}^\infty \ll\,|\partial_t\,\calN^{-1}\varphi|,\,|\partial_t\varphi_k-\partial_t\varphi|\gg_{2,0}\,\,dt\label{5.4}\\
&~~~~~~~~~+\int_{-\infty}^\infty \ll\,|\partial_t\,\calN^{-1}\varphi|,\,|f_h'(\varphi_k)-f_h'(\varphi)|\,|\partial_t\varphi|\gg_{2,0}\,\,dt\label{5.5}.
\end{align}

By Theorem \ref{thm-derivative},
\begin{align*}
\int_{-\infty}^\infty\|\partial_t\varphi_k-\partial_t\varphi\|_{2,0}^2\,dt=\|\calN^{1/2}( \varphi_k-\varphi)\|_{2,0}^2\to 0.
\end{align*}
Then
$$
(\ref{5.4})\leq\,3|f_h'|_\infty \left\{\int_{-\infty}^\infty \|\partial_t\,\calN^{-1}\varphi\|_{2,0}^2\,dt\right\}^{\frac{1}{2}}\left\{\int_{-\infty}^\infty\|\partial_t\varphi_k-\partial_t\varphi\|_{2,0}^2\,\,dt\right\}^{\frac{1}{2}}\to 0
$$
 as $k\to\infty$. In addition, since $\|\varphi_k\to\varphi\|_{2,0}\to 0$, there exists a subsequence, still written as $\{\varphi_k\}$, such that
$$
\varphi_k(x)\to\varphi(x),~~~\mbox{for $[\mu]$-almost all $x\in\calS'$.}
$$
Hence
$$
\Phi_k(t,x):=|\partial_t\,\calN^{-1}\varphi(x)|\,|f_h'(\varphi_k(x))-f_h'(\varphi(x))|\,|\partial_t\varphi(x)|\to 0
$$
for $[dt\otimes\mu]$-almost all $(t,x)$ in $\r\times\calS'$, where
$$
|\Phi_k(t,x)|\leq\,2|f_h'|_\infty|\partial_t\,\calN^{-1}\varphi(x)|\,|\partial_t\varphi(x)|\in L^1(\r\times\calS',\,dt\otimes\mu).
$$
Then it follows by the Lebesgue dominated convergence theorem that (\ref{5.5})$\to 0$ as $k\to\infty$. So we can conclude by (\ref{5.2})--(\ref{5.5}) that
\begin{align*}
\ll\,\calN\calN^{-1}\varphi,\,f_h(\varphi)\gg_{2,0}=&\lim_{k\to\infty}\ll\,\calN\calN^{-1}\varphi,\,f_h(\varphi_k)\gg_{2,0}\\
=&\lim_{k\to\infty}\int_{-\infty}^\infty \ll\,\partial_t\,\calN^{-1}\varphi,\,\partial_t f_h(\varphi_k)\gg_{2,0}\,\,dt\\
=&\lim_{k\to\infty}\int_{-\infty}^\infty \ll\,\partial_t\,\calN^{-1}\varphi,\, f_h'(\varphi_k)\,\partial_t\varphi_k\gg_{2,0}\,\,dt\\
=&\int_{-\infty}^\infty \ll\,\partial_t\,\calN^{-1}\varphi,\, f_h'(\varphi)\,\partial_t\varphi\gg_{2,0}\,\,dt\\
=&\int_{\calS'}f_h'(\varphi(x))\int_{-\infty}^\infty \partial_t\,\calN^{-1}\varphi(x)\, \partial_t\varphi(x)\,\,dt\,\mu(dx).
\end{align*}
Together with (\ref{5.1}) and $d_\scrH(\varphi,\,Z)$, we obtain

\begin{theorem} \label{thm-normal-approx-bd}
Let $\varphi\in {\rm Dom}(\,\calN^{1/2})$ with $\e[\varphi]=0$ and let $Z$ on $(\calS',\scrB(\calS'),\mu)$ have the standard normal distribution.  Suppose $\scrH$ is a separating class of compex-valued test functions defined on $\r$ such that for $h \in \scrH$, both $\e h(\varphi)$ and $\e h(Z)$ exist and $f_h$ is a bounded function with bounded continuous derivative $f'_h$. Then we have
$$
d_{\scrH}(\varphi,\,Z)\leq\,\left\{\sup_{h\in\scrH}|f_h'|_\infty\right\}\int_{\calS'}\left|1-\int_{-\infty}^\infty\partial_t\,\calN^{-1}\varphi(x)\cdot\partial_t\varphi(x)\,dt\right|\,\mu(dx).
$$
\end{theorem}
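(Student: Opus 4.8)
The plan is to recognize that the computation carried out immediately above the statement has already produced a Stein identity for $\varphi$, and that the theorem is the routine bound extracted from it. Concretely, the displayed chain ending in
$$
\ll\,\calN\calN^{-1}\varphi,\,f_h(\varphi)\gg_{2,0}=\int_{\calS'}f_h'(\varphi(x))\int_{-\infty}^\infty \partial_t\,\calN^{-1}\varphi(x)\, \partial_t\varphi(x)\,dt\,\mu(dx)
$$
together with $\varphi=\calN\calN^{-1}\varphi$ (valid since $\e[\varphi]=0$) shows that $\e[\varphi f_h(\varphi)]=\e[f_h'(\varphi)\,T]$, where $T=\int_{-\infty}^\infty \partial_t\,\calN^{-1}\varphi\cdot\partial_t\varphi\,dt$. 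This is exactly the Stein identity (\ref{Stein-id-T}) with $W=\varphi$. Its derivation rests on Theorem \ref{thm-derivative}(ii), the membership $\calN^{-1}\varphi\in{\rm Dom}(\calN)$, and the dominated-convergence passage through $\{\varphi_k\}\subset(\calS)$ already executed; I would simply cite that those steps supply the identity and guarantee that every pairing involved is finite (using $f_h(\varphi)\in(L^2)$ and $f_h'(\varphi)\,\partial_t\varphi\in(L^2)$ for almost all $t$).

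Next I would invoke the distance representation recorded at the start of this section: under the hypotheses that each $f_h$ is bounded and absolutely continuous with bounded $f_h'$, and that $\e h(\varphi)$ and $\e h(Z)$ exist for $h\in\scrH$, one has
$$
d_\scrH(\varphi,\,Z)=\sup_{h\in\scrH}\bigl|\e[f_h'(\varphi)-\varphi f_h(\varphi)]\bigr|.
$$
Substituting the Stein identity into the term $\e[\varphi f_h(\varphi)]$ collapses each summand to a single expectation,
$$
\e[f_h'(\varphi)-\varphi f_h(\varphi)]=\e[f_h'(\varphi)]-\e[f_h'(\varphi)\,T]=\e[f_h'(\varphi)(1-T)],
$$
the constant $1$ arising merely as $\e[f_h'(\varphi)\cdot 1]$.

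Finally I would apply the elementary bound $|\e[f_h'(\varphi)(1-T)]|\le|f_h'|_\infty\,\e|1-T|=|f_h'|_\infty\int_{\calS'}|1-T(x)|\,\mu(dx)$ and take the supremum over $h\in\scrH$, pulling $\sup_{h\in\scrH}|f_h'|_\infty$ out of the integral (which is independent of $h$). Rewriting $T$ explicitly yields precisely
$$
d_{\scrH}(\varphi,\,Z)\leq\Bigl\{\sup_{h\in\scrH}|f_h'|_\infty\Bigr\}\int_{\calS'}\Bigl|1-\int_{-\infty}^\infty\partial_t\,\calN^{-1}\varphi(x)\cdot\partial_t\varphi(x)\,dt\Bigr|\,\mu(dx).
$$

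I do not expect any serious obstacle in this wrap-up: the substantive difficulties, namely extending $\partial_t$ to ${\rm Dom}(\calN^{1/2})$, the integration-by-parts formula of Theorem \ref{thm-derivative}, and justifying the limit $\varphi_k\to\varphi$ in the Stein identity, have all been dispatched before the statement. The only point deserving a line of care is verifying that $\e|1-T|<\infty$, so that the right-hand side is meaningful; this follows from $\varphi\in{\rm Dom}(\calN^{1/2})$ via the $L^1(\r\times\calS')$-integrability of $|\partial_t\,\calN^{-1}\varphi|\,|\partial_t\varphi|$ already noted in the preceding estimates together with the boundedness of $f_h'$.
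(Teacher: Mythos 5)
Your proposal is correct and follows essentially the same route as the paper: the paper's proof of this theorem \emph{is} the computation immediately preceding the statement (the Stein identity obtained via the approximating sequence $\{\varphi_k\}\subset(\calS)$, Theorem \ref{thm-derivative}, and dominated convergence), combined with the representation $d_\scrH(\varphi,Z)=\sup_{h\in\scrH}|\e[f_h'(\varphi)-\varphi f_h(\varphi)]|$ and the elementary bound $|\e[f_h'(\varphi)(1-T)]|\le|f_h'|_\infty\,\e|1-T|$, exactly as you assemble it. Your closing remark on the finiteness of $\e|1-T|$ via Cauchy--Schwarz and $\varphi\in{\rm Dom}(\calN^{1/2})$ is a correct and worthwhile point of care that the paper leaves implicit.
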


If $\varphi(x)=(x,\,\eta)$ for $\eta\in\calS$ and $x\in\calS'$, then $\varphi$ has the normal distribution with mean $0$ and variance $|\eta|_0^2$. It is easy to see that
\begin{align*}
&\,\left\{\sup_{h\in\scrH}|f_h'|_\infty\right\}\int_{\calS'}\left|1-\int_{-\infty}^\infty\partial_t\,\calN^{-1}\varphi(x)\cdot\partial_t\varphi(x)\,dt\right|\,\mu(dx)\\
&~~~=\,\left\{\sup_{h\in\scrH}|f_h'|_\infty\right\}\cdot\bigl|1-|\eta|_0^2\bigr|.
\end{align*}

So, if $\varphi\sim N(0,1)$, the upper bound in Theorem \ref{thm-normal-approx-bd} is zero.
This shows that the bound is tight.
The quantity $\sup_{h\in\scrH}|f_h'|_\infty$ can be bounded by applying Proposition \ref{prop-Stein-eqn-soln}.
The function $f'_h$ is continuous if $h$ is.
For total variation distance, (\ref{d_TV-h-C}) allows $h$ to be such that both the real and imaginary parts of it are continuous and bounded between $0$ and $1$.

Theorem \ref{thm-normal-approx-bd} provides a general bound for the normal approximation for $\varphi\in {\rm Dom}(\,\calN^{1/2})$.
It can be applied to produce explicit bounds for special cases of $\varphi$ as in the work of Nourdin and Peccati \cite{NP}.
However, we will not pursue this application in this chapter.

\section*{Acknowledgement}

This work was partially supported by Grant R-146-000-182-112 from the National University of Singapore and also by  Grant 104-2115-M-390-002 from the Ministry of Science and Technology of Taiwan.

\medskip

\clearpage
\thispagestyle{empty}
\markboth{}{}
\cleardoublepage


\begin{thebibliography}{99}

\bibitem{BA} A. D. Barbour, Stein's method and Poisson process convergence, {\it J. Appl. Probab.} 25(A) (1988), 175-184.
\bibitem{BB} A. D. Barbour, Stein's method for diffusion approximation, {\it Probab. Th. Rel. Fields} 84 (1990), 297-322.
\bibitem{BC} A. D. Barbour, L. H. Y. Chen, {\it An Introduction to Stein's Method}, Lect. Notes Ser. Inst. Math. Sci. Natl. Univ. Singap., vol. 4, Singapore Univ. Press, World Scientific, Singapore, 2005.
\bibitem{BK} Y. M. Berezansky, Yu. G. Kondratiev, {\it Spectral Methods in Infinite Dimensional Analysis}, (in Russian), Naukova Dumka, Kiev, 1988. English translation, Kluwer Academic Publishers, Dordrecht, 1995.
\bibitem{Chat}S. Chatterjee, Fluctuations of eigenvalues and second order Poincar\'e inequalities. {\it Probab. Th. Rel. Fields}, 143 (2009), 1-40.
\bibitem{Chen} L. H. Y. Chen, Poisson approximation for dependent trials, {\it Ann. Probab.} 3 (1975), 534-545.
\bibitem{Chena} L. H. Y. Chen, Stein meets Malliavin in normal approximation, {\it Acta Math. Vietnam}. 40 (2015), 205-230.
\bibitem{CGS} L. H. Y. Chen, L. Goldstein, Q. M. Shao, {\it Normal Approximation by Stein's Method}, Probability and Its Applications, Springer, 2011.
\bibitem{CP} L. H. Y. Chen, G. Poly, Stein's method, Malliavin calculus, Dirichlet forms and the fourth moment theorem.{\it Festschrift Masatoshi Fukushima} (Z-Q Chen, N. Jacob, M. Takeda and T. Uemura, eds.), Interdisciplinary Mathematical Sciences, vol. 17, World Scientific, 107-130.
\bibitem{CS}L. H. Y. Chen and Q. M. Shao, Normal approximation under local dependence, {\it Ann. Probab.} 32 (2004), 1985-2028.
\bibitem{Chu} P.-C. Chu, {\it Stein's Method, Malliavin Calculus, L\'evy White Noise Analysis, and their Applications in Financial Mathematics}, PhD Thesis, School of Mathematical Sciences, University of Nottingham, 2015.
 \bibitem{GV} I. M. Gel'fand, N. Y. Vilenkin, {\it Generalized Functions}, vol. 4, Academic Press, 1964.
\bibitem{Gross} L. Gross, Potentional theory on Hilbert space, {\it J. Funct. Anal.} 1 (1967), 123-181.
\bibitem{Hida} T. Hida, {\it Analysis of Brownian Functionals}, \rm Carleton Mathematical Lecture Notes 13, 1975.
\bibitem{HKPS} T. Hida, H.-H. Kuo, J. Potthoff, L. Streit,
{\it White Noise\rm: An Infinite Dimensional Calculus}, \rm Kluwer Academic Publishers, 1993.
\bibitem{HidaSi} T. Hida, Si Si, {\it Lectures on White Noise Functionals}, World Scientific Pub. Co., 2008.
\bibitem{KT} I. Kubo, S. Takenaka, Calculus on Gaussian white noises I, {\it Proc. Japan Acad. Ser. A Math. Sci.} 56 (1980), 376-380; Calculus on Gaussian white noises II,
{\it Proc. Japan Acad. Ser. A Math. Sci.} 56 (1980), 411-416; Calculus on Gaussian white noises III, {\it Proc. Japan Acad. Ser. A Math. Sci.} 57 (1981), 433-437; Calculus on Gaussian white noise IV, {\it Proc. Japan Acad. Ser. A Math. Sci.} 58 (1982), 186-189.
\bibitem{Ka} H.-H. Kuo, Integration by parts for abstract Wiener measures, {\it Duke Math. J.} 41 (1974), 373-379.
\bibitem{Kb} H.-H. Kuo, {\it Gaussian Measures in Banach Spaces}, Lect. Notes in Math., vol. 463, Springer-Verlag, Berlin/New York, 1975.
\bibitem{LK} H.-H. Kuo, Y.-J. Lee, Integration by parts formula and Stein lemma on abstract Wiener space, {\it Communications on Stochastic Anal.} Vol.5 No.2 (2011), 405-418.
\bibitem{La} Y.-J. Lee, Sharp inequalities and regularity of heat semigroup on infinite dimensional space, {\it J. Funct. Anal.} \rm 71 (1987), 69-87.
\bibitem{Lb} Y.-J. Lee, On the convergence of Wiener-It\^o decomposition, {\it Bull. Inst. Math. Acad. Sinica} \rm 17 (1989), 305-312.
\bibitem{Lc} Y.-J. Lee, Generalized functions on infinite dimensional spaces and its application to white noise calculus, {\it J. Funct. Anal.} \rm 82 (1989), 429-464.
\bibitem{Ld} Y.-J. Lee, Analytic version of test functionals, Fourier transform and a characterization of measures in white noise calculus, {\it J. Funct. Anal.} \rm 100 (1991), 359-380.
\bibitem{Lf} Y.-J. Lee, A characterization of generalized functions on infinite dimensional spaces and Bargmann-segal analytic functions, In ``{\it Gaussian Random Field}", The Third Nagoya L\'evy semiar, edited by K. It\^o and T. Hida, 272-284, World Scientific, 1991.
\bibitem{Le} Y.-J. Lee, Integral representation of second quantization and its Application to white noise analysis, {\it J. Funct. Anal.} \rm 133 (1995), 253-276.
\bibitem{LS} Y.-J. Lee, H.-H. Shih, A characterization of generalized L\'evy white noise functionals, {\it Quantum Information and Complexity}, Eds. T. Hida, K. Sait\^o and Si Si World Scientific, 2004, 321-339.
\bibitem{LS2004}Y.-J. Lee, H.-H. Shih, Analysis of generalized L\'evy white noise functionals, {\it J. Funct. Anal.} \rm 211 (2004), 1-70.
\bibitem{Ma} P. Malliavin, Stochastic calculus of variations and hypoelliptic operators, {\it Proc. Int. Symp. on Stoch. Diff. Equations}, Kyoto 1976, Kinokuniya, 195-263, 1978.
\bibitem{N12}I. Nourdin, Lectures on Gaussian approximations with Malliavin calculus, {\it S\'em. Probab.} XLV (2013), Springer, 3-89.
\bibitem{NP} I. Nourdin, G. Peccati, Stein's method on Wiener chaos, {\it Probab. Th. Rel. Fields} 145 (2009), 75-118.
\bibitem{NP05}
D. Nualart and G. Peccati, Central limit theorems for sequences of multiple stochastic integrals, {\it Ann. Probab.} 33 (2005), 177-193.
\bibitem{PS} J. Potthoff, L. Streit, A characterization of Hida distributions, {\it J. Funct. Anal.} 101 (1991), 212-229.
\bibitem{Scha} R. Schatten, {\it Norm Ideals of Completely Continuous Operators}, Springer-Verlag, Berlin/G\"ottingen/Heidelberg, 1960.
\bibitem{Sch} W. Schoutens, {\it Stochastic Processes and Orthogonal Polynomials}, Lecture Notes in Statistics, vol. 146, Springer-Verlag, Berlin/Heidelberg, 2000.
\bibitem{Schb} W. Schoutens, Orthogonal polynomials in Stein's method, {\it J. Math. Anal. Appl.} 253 (2001), 515-531.
\bibitem{Shih} H.-H. Shih, On Stein's method for infinite-dimensional Gaussian approximation in abstract Wiener spaces, {\it J. Funct Anal.} 261 (2011),1236-1283.
\bibitem{Si} Si Si, {\it Introduction to Hida Distributions}, World Scientific Pub. Co., 2012.
 \bibitem{Steina} C. Stein, A bound for the error in the normal
approximation to the distribution of a sum of dependent random variables,
in:~``{\it Proceedings of the Sixth Berkeley Symposium on Mathematical Statistics
and Probability},'' II: Probability theory, 583-602, Univ. California Press, Berkeley, 1972.
\bibitem{Steinb} C. Stein, {\it Approximation Computation of Expectation}, IMS Lect. Notes Monogr. Ser. 7,
 Inst. Math. Statist., Hayward, CA, 1986.

\end{thebibliography}
\end{document}